\newcommand*\Hom{\mathrm{Hom}}
\newcommand*\op{\mathsf{op}}
\newcommand*\dist{d}
\begin{document}

\title{A Gelfand duality for continuous lattices}
\author{Ruiyuan Chen}
\date{}
\maketitle

\begin{abstract}
We prove that the category of continuous lattices and meet- and directed join-preserving maps is dually equivalent, via the hom functor to $[0,1]$, to the category of complete Archimedean meet-semilattices equipped with a finite meet-preserving action of the monoid of continuous monotone maps of $[0,1]$ fixing $1$.
We also prove an analogous duality for completely distributive lattices.
Moreover, we prove that these are essentially the only well-behaved ``sound classes of joins $\Phi$, dual to a class of meets'' for which ``$\Phi$-continuous lattice'' and ``$\Phi$-algebraic lattice'' are different notions, thus for which a $2$-valued duality does not suffice.
\let\thefootnote=\relax
\footnotetext{2020 \emph{Mathematics Subject Classification}:
    Primary
    06B35, 
    06D10, 
    18F70; 
    Secondary
    18A35. 
}
\footnotetext{\emph{Key words and phrases}: continuous lattice, completely distributive lattice, duality, free cocompletion.}
\end{abstract}

\section{Introduction}
\label{sec:intro}

The classical Gelfand duality asserts that a compact Hausdorff space $X$ may be recovered from its ring of continuous functions $C(X)$, and moreover such rings are up to isomorphism precisely the commutative $C^*$-algebras.
From a categorical perspective, $C(X)$ is best regarded as having ``underlying set'' given by its (positive) unit ball, i.e., consisting of continuous $\#I := [0,1]$-valued functions, so that Gelfand duality falls under the umbrella of Stone-type dualities induced by two ``commuting'' structures on $\#I$; see \cite[VI~\S4]{Jstone}.
Namely, $\#I$ is equipped with its usual compact Hausdorff topology, and also with all operations $\#I^\kappa -> \#I$ ``commuting'' with the topology, i.e., which are continuous.
Thus, for another object in either category, the hom functor into $\#I$ yields a dual in the other category, and this gives a dual adjunction, which Gelfand duality asserts is an equivalence.
An explicit axiomatization of the dual operations on the $\#I$-valued $C(X)$ was recently given in \cite{MRgelfand}; see there for a detailed history of $\#I$-valued Gelfand duality.
In \cite{HNNnachbin}, \cite{Anachbin}, $\#I$-valued Gelfand duality was further extended to compact partially ordered spaces (\emph{a la} Nachbin).

In this note, we prove analogous Gelfand-type dualities for compact pospaces equipped with lattice operations.
Recall that a \defn{continuous lattice} is a compact topological meet-semilattice obeying a ``local convexity under meets'' condition, that each point has a neighborhood basis of subsemilattices.
Equivalently, they can be defined purely order-theoretically as posets with arbitrary meets distributing over directed joins.
An analog of Urysohn's lemma, sometimes known as the Urysohn--Lawson lemma, states that every continuous lattice $X$ admits enough morphisms to $\#I$, i.e., the canonical evaluation map $X -> \#I^{\Hom(X, \#I)}$ is an embedding; see \cite[IV-3.3]{GHKLMS}, \cite[VII~3.2]{Jstone}.
It is thus natural to ask whether, by equipping $\Hom(X, \#I)$ with suitable structure commuting with the continuous lattice structure on $\#I$, we may recover $X$ as the double dual.

Let $\^{\#U}$ denote the monoid of continuous monotone maps $\#I -> \#I$ fixing $1$, i.e., all unary operations on $\#I$ commuting with the continuous lattice structure.
Note that finite meets do as well.
By a \defn{$\^{\#U}$-module}, we mean a unital meet-semilattice equipped with an action of $\^{\#U}$ preserving finite meets in both variables.
In every $\^{\#U}$-module $A$, we have a canonical pseudoquasimetric
\begin{align*}
\rho(a, b) := \bigwedge \{r \in \#I \mid a \le b \dotplus r\}
\end{align*}
where $b \dotplus r$ denotes the result of the action on $b$ of the truncated addition $(-) \dotplus r \in \^{\#U}$.
We say $A$ is \defn{Archimedean} if $\rho(a, b) = 0 \implies a \le b$, and \defn{complete} if $A$ is Archimedean and complete with respect to the induced metric $\dist(a, b) := \rho(a, b) \vee \rho(b, a)$.
We prove

\begin{theorem}[\cref{thm:ctslat-dual-umod}]
\label{thm:intro-ctslat-dual-umod}
Hom into $\#I$ yields a dual equivalence of categories between continuous lattices and complete $\^{\#U}$-modules.
\end{theorem}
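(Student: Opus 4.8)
The plan is to treat $\#I$ as a schizophrenic (ambimorphic) object lying in both categories and to show that the resulting dual adjunction is a dual equivalence. Write $\mathbf{CL}$ for continuous lattices and $\mathbf{CM}$ for complete $\^{\#U}$-modules. First I note that $\#I$ is simultaneously a continuous lattice and a complete $\^{\#U}$-module, with $\^{\#U}$ acting on it by evaluation (which preserves finite meets because a monotone self-map of a chain is meet-preserving). For $X\in\mathbf{CL}$ I equip $DX := \Hom(X,\#I)$ with pointwise meets, the constant morphism $1$ as unit, and the postcomposition action $f\cdot\phi := f\circ\phi$; for $A\in\mathbf{CM}$ I equip $DA := \Hom(A,\#I)$ (module morphisms) with pointwise meets and the subspace order/topology from $\#I^A$. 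The first task is well-definedness of these two functors. Here $DA$ is an inf- and directed-join-closed subset of the continuous lattice $\#I^A$ (both closure properties follow from the distribution of meets over directed joins in $\#I$ and Scott-continuity of the elements of $\^{\#U}$), hence is itself a continuous lattice, since $\mathbf{CL}$ is complete with limits created in $\mathbf{Set}$; dually $DX$ is a sub-$\^{\#U}$-module of $\#I^X$ that is complete in $\dist$, the one substantive point being that a uniform limit of lattice morphisms is again meet- and directed-join-preserving. Currying evaluation $X\times A\to\#I$ then gives a natural bijection $\mathbf{CL}(X,DA)\cong\mathbf{CM}(A,DX)$, i.e.\ a dual adjunction with canonical double-dual comparison maps $\eta_X\colon X\to DDX$ and $\epsilon_A\colon A\to DDA$, both given by $x\mapsto\mathrm{ev}_x$.

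It then suffices to prove that $\eta_X$ and $\epsilon_A$ are isomorphisms for all $X$ and $A$. Consider $\eta_X$ first. The Urysohn--Lawson lemma asserts precisely that the evaluation $X\to\#I^{\Hom(X,\#I)}$ is a topological embedding; as $\eta_X$ is its corestriction along the closed subspace $DDX\subseteq\#I^{DX}$ and is visibly a morphism in $\mathbf{CL}$, it is an embedding onto a closed sub-continuous-lattice of $DDX$. Surjectivity is the substantive point: given a $\^{\#U}$-module morphism $\Xi\in DDX=\Hom(DX,\#I)$, I must produce $x\in X$ with $\Xi(\phi)=\phi(x)$ for every $\phi\in DX$. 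The first step is to show $\Xi(\phi)\in\phi(X)$ for each $\phi$: if some $t=\Xi(\phi)$ lay in an open gap of the compact set $\phi(X)$, one could choose two maps in $\^{\#U}$ agreeing on $\phi(X)$ but differing at $t$, contradicting the action-equivariance $\Xi(f\circ\phi)=f(\Xi(\phi))$. Each set $E_\phi:=\{x:\phi(x)=\Xi(\phi)\}$ is thus nonempty and closed, and one then shows the $E_\phi$ have the finite intersection property, using that $\Xi$ preserves finite meets together with the directed-join structure of $X$; compactness of $X$ delivers a common point $x$, unique by the separation already provided by Urysohn--Lawson.

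For $\epsilon_A\colon A\to DDA$ I separate the roles of the two standing hypotheses. The Archimedean axiom yields a representation (separation) lemma: for $a\not\le b$, so that $\rho(a,b)>0$, there is a module morphism $\xi\colon A\to\#I$ with $\xi(a)\not\le\xi(b)$, which I would build by a Zorn's-lemma extension of a partial $\#I$-valued character, the metric $\dist$ controlling the extension and completeness of $A$ ensuring the limiting morphism stays inside $A$. This makes $\epsilon_A$ an injective morphism, indeed an isometry for $\dist$ and its counterpart on $DDA$. Completeness of $A$ then makes $\epsilon_A(A)$ a closed subset of $DDA$, and a density argument—approximating an arbitrary character of $DA$ by evaluations—shows it is dense, whence $\epsilon_A$ is onto. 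Conceptually the two conditions are exactly calibrated: the Archimedean property removes redundant elements so that $\epsilon_A$ is injective, and completeness supplies the missing limits so that it is surjective. (One organizing remark: once $\eta$ is known to be a global isomorphism, the triangle identities force $\epsilon_{DX}$ to be invertible for every $X$, so the adjunction already restricts to a dual equivalence onto the reflexive objects of $\mathbf{CM}$; the separation-plus-completeness argument is then precisely what identifies these reflexive objects as all of $\mathbf{CM}$.)

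The single hardest step, and the one where the specific structure of continuous lattices is indispensable, is the surjectivity of $\eta_X$: extracting a genuine point of $X$ from an abstract character $\Xi$ on $\Hom(X,\#I)$, i.e.\ establishing the finite intersection property of the equalizer sets $E_\phi$. I expect this to require combining the compactness of $X$ with the full $\^{\#U}$-action—not merely preservation of finite meets—to pin the character down to a single point, exactly as the interplay of topology and operations is what powers the classical $\#I$-valued Gelfand duality for $C(X)$.
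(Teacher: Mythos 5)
Your global architecture coincides with the paper's: set up the dual adjunction induced by $\#I$ and prove that both evaluation maps are isomorphisms (the paper does this via \cref{thm:cts-dual} and \cref{thm:uinf-dual}, specialized to $\Phi =$ directed joins, and then transports to $\^{\#U}$-modules via \cref{thm:umod-uinf}); your closing remark about reflexive objects is also correct. But the two steps carrying essentially all the content are exactly the ones you leave as expectations rather than proofs. For surjectivity of $\eta_X$, you propose to establish the finite intersection property of the equalizer sets $E_\phi$ ``using that $\Xi$ preserves finite meets together with the directed-join structure,'' and then concede at the end that the full action must somehow be used, without saying how --- that concession is the gap. Finite-meet preservation plus equivariance yields only, e.g., $\phi_1(x)\wedge\phi_2(x)=\Xi(\phi_1)\wedge\Xi(\phi_2)$ for some $x$ (apply your one-function argument to $\phi_1\wedge\phi_2$), which does not split into the two desired equalities; and the finite intersection property for $\phi_1,\dotsc,\phi_n$ is equivalent to surjectivity of the same kind of evaluation map for the image sub-continuous-lattice of $\#I^n$, so nothing has been reduced. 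The paper's device for exploiting the action is \cref{thm:upos-ker}: a module morphism $\Xi : \Hom(X,\#I) \to \#I$ is reconstructed from its kernel via $\Xi = 1-\rho(\Xi^{-1}(1),-)$, identifying characters with $\#U$-invariant $\rho^\op$-closed filters; surjectivity then becomes the inclusion $\~\eta(\~\eta^+(\phi))\subseteq\phi$, a short computation from $\#U$-invariance, with Urysohn--Lawson interpolation giving the reverse inequality. You have no analogue of this kernel/metric lemma, and without one your step has no proof.

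The module side has the same problem twice. Your separation lemma rests on ``a Zorn's-lemma extension of a partial $\#I$-valued character,'' i.e., on an extension property of $\#I$ among Archimedean $\^{\#U}$-modules that is neither established nor obviously true, and is not how the paper proceeds: there, injectivity of $\iota$ comes from the explicit construction $\~\iota(a)(r)=\-{U_r(a)}$ (the $\rho^\op$-closure of $\{u(a) : u\in\#U,\ u(r)=1\}$) combined with the Archimedean property and \emph{unstackability} (\cref{thm:upos-unstack-r}), while density of the image is proved by gluing elements along a partition of $[0,1]$ using \emph{stackability} (\cref{thm:upos-stack-n}). In your formulation the objects are complete $\^{\#U}$-modules, for which (un)stackability is not an axiom but a theorem, deduced from completeness and the finite-meet structure --- this is precisely \cref{thm:umod-uinf} --- and nothing in your proposal plays that role, even though your unsupported ``density argument'' is exactly where it is needed. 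So, apart from the adjunction setup and the use of Urysohn--Lawson for the embedding $\eta_X$, both halves of the theorem remain unproved in the proposal.
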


There is a generalization of continuous lattice theory, with the role of directed joins replaced by an arbitrary ``class of joins $\Phi$'' obeying suitable axioms; see \cite{WWTzpos}, \cite{BEzcts}, \cite{Xctslat}, as well as \cite{AKsat}, \cite{ABLRdacc}, \cite{KScolim} for a further extension in enriched category theory.
Other than $\Phi =$ ``directed joins'', the most well-known case is $\Phi =$ ``all joins'', for which $\Phi$-continuous lattices are completely distributive lattices.
As for continuous lattices, there is a Urysohn-type lemma, stating that all completely distributive lattices admit enough morphisms to $\#I$; see \cite[IV-3.31--32]{GHKLMS}, \cite[1.10--14]{Jstone}.
We likewise boost this to a Gelfand-type duality as follows.

Let $\#U \subseteq \^{\#U}$ denote the monoid of complete lattice morphisms, i.e., monotone surjections.
A \defn{$\#U$-poset} is a poset with a monotone action of $\#U$.
There is a canonical way of defining a pseudoquasimetric on a $\#U$-poset, agreeing with the above definition in $\^{\#U}$-modules; see \cref{def:upos-met}.
A $\#U$-poset $A$ is \defn{stackable} if, intuitively speaking, an element $a \in A$ may be specified via its ``restrictions to sublevel and superlevel sets $a^{-1}([0,r]), a^{-1}([r,1])$'' for any $0 < r < 1$; see \cref{def:upos-stack}.

\begin{theorem}[\cref{thm:cdlat-dual}]
\label{thm:intro-cdlat-dual}
Hom into $\#I$ yields a dual equivalence of categories between completely distributive lattices and complete stackable $\#U$-posets.
\end{theorem}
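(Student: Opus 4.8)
The plan is to exhibit this equivalence as the assertion that every object of both categories is reflexive for the dual adjunction induced by $\#I$ as a ``schizophrenic'' object, living simultaneously as a completely distributive lattice and as a complete stackable $\#U$-poset. First I would set up the two contravariant hom-functors $F(X) := \Hom(X, \#I)$ and $G(A) := \Hom(A, \#I)$ (the former taken in completely distributive lattices, the latter in $\#U$-posets), with all structure computed pointwise from $\#I$. Because every $u \in \#U$ is a complete lattice morphism, postcomposition by $u$ preserves pointwise joins and meets, so the complete-lattice structure on $\#I$ and the $\#U$-action on $\#I$ commute; this is exactly what makes $F(X)$ a $\#U$-poset, makes $G(A)$ a completely distributive lattice, and makes the evaluation maps $\eta_X : X \to G F(X)$ and $\epsilon_A : A \to F G(A)$ morphisms in the respective categories, i.e. the unit and counit of a dual adjunction $F \dashv G$. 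The theorem then reduces to showing that $\eta$ and $\epsilon$ are isomorphisms.

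Second, I would check that $F$ and $G$ land in the stated categories by a closure/inheritance argument. Here $G(A) = \Hom(A,\#I) \subseteq \#I^A$ is closed under arbitrary pointwise joins and meets — again because each $u \in \#U$ preserves them, so $\#U$-equivariance passes to pointwise joins and meets — hence it is a sub-completely-distributive-lattice of $\#I^A$, complete distributivity being an identity between joins and meets inherited from the power. Dually $F(X) = \Hom(X,\#I) \subseteq \#I^X$ must be shown closed under the data of \cref{def:upos-met} and \cref{def:upos-stack}: the $\#U$-action and order are pointwise; metric completeness is inherited because a uniform limit of completely-distributive-lattice morphisms again preserves all joins and meets (the join and meet interchanging with the uniform limit); and stackability is inherited because the sublevel/superlevel decomposition of \cref{def:upos-stack} is computed threshold-wise and pointwise in $\#I$.

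Third, I would show the unit $\eta_X$ is an isomorphism. That $\eta_X$ is an order-embedding — equivalently, that $X$ has enough morphisms to $\#I$ — is precisely the Urysohn-type lemma for completely distributive lattices cited above. The remaining content is surjectivity: every $\#U$-poset morphism $\Phi : F(X) \to \#I$ is evaluation at a (unique) point $x \in X$. I would recover $x$ as a join of the elements on which $\Phi$ is forced to be large, using the $\#U$-action to control thresholds; since a completely distributive lattice is in particular a continuous lattice and $\#U \subseteq \^{\#U}$, this representability can largely be bootstrapped from the point-reconstruction already established for continuous lattices in \cref{thm:intro-ctslat-dual-umod}.

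Finally — and this is where I expect the real difficulty — I would prove the counit $\epsilon_A$ is an isomorphism for every complete stackable $\#U$-poset $A$. Injectivity is the Archimedean axiom together with ``enough morphisms $A \to \#I$'': distinct elements are separated by $\rho$, hence by some dual morphism. The hard half is surjectivity, the reconstruction of an element of $A$ from an arbitrary completely-distributive-lattice morphism $\Psi : G(A) \to \#I$. This is exactly what the stackability axiom is engineered to supply: $\Psi$ determines coherent ``threshold data'' on the sublevel and superlevel sets $a^{-1}([0,r])$ and $a^{-1}([r,1])$, stackability reassembles these into a candidate element, and metric completeness guarantees the requisite limit lands in $A$. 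The main obstacle is to show that this reassembly is well-defined and independent of the threshold $r$, and then to verify that the reconstructed element $a$ satisfies $\Psi = \mathrm{ev}_a$ on all of $G(A)$ — that is, that stackability and completeness together are not merely necessary but sufficient for reflexivity.
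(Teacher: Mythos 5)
Your high-level skeleton --- the dual adjunction induced by $\#I$ as a dualizing object, reduction of the theorem to showing both evaluation maps are isomorphisms, Urysohn for the embedding, stackability plus completeness for the counit --- matches the paper's strategy (this theorem is \cref{thm:cdlat-dual}, obtained by specializing \cref{thm:cts-dual} and \cref{thm:uinf-dual} to the doctrine of all joins, where the $\Psi^\op$-meets are trivial and the dual category is just $\#U\!{Pos}$), and your step 2 is fine, indeed more careful than the paper's one-line remark. But there are genuine gaps at the two steps carrying the real content. For the unit: your proposed bootstrap from \cref{thm:intro-ctslat-dual-umod} fails because the two dualities use different hom-sets. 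A completely distributive $X$ is a continuous lattice, but your datum is a $\#U$-poset morphism $h$ defined on $\!{CLat}(X, \#I)$, which is in general a proper subset of $\!{CtsLat}(X, \#I)$, whereas \cref{thm:intro-ctslat-dual-umod} only represents $\^{\#U}$-module morphisms defined on all of $\!{CtsLat}(X, \#I)$ and using the extra structure there (finite meets, the larger monoid $\^{\#U}$). To invoke it you would need to extend $h$ to such a morphism, and the existence of this extension is essentially equivalent to the representability you are trying to prove, so the reduction is circular. The paper never derives one duality from the other: both are corollaries of a single uniform theorem, whose unit-surjectivity argument translates dual morphisms into $\#U$-invariant $\rho^\op$-closed filters (\cref{thm:cts-ladj}, \cref{thm:upos-ker}) and then uses $\#U$-invariance concretely: for such a filter $\phi$ and $x := \bigvee_{f^+ \in \phi} f^+(1)$, any $g$ with $g^+(1) \le f^+(1)$ satisfies $g \ge u \circ f$ for some $u \in \#U$, hence $g^+ \in \phi$.

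For the counit: your step 4 is a restatement of the goal, not an argument. Saying stackability ``is engineered to supply'' the reconstruction names the axiom but not the mechanism, and the difficulty you flag (independence of the threshold $r$) is not where the difficulty lies. What the paper actually does in \cref{thm:uinf-dual} is: (i) compute that the double dual of $a$ sends $r$ to the closed filter $\-{U_r(a)}$ generated by $\{u(a) \mid u \in \#U,\ u(r) = 1\}$; (ii) prove injectivity by an estimate showing $a \le_{2/n} b$ for every $n$, obtained by unstacking over a partition of $[0,1]$ into $n$ intervals (\cref{thm:upos-unstack-r}) and invoking the Archimedean axiom; (iii) prove density of the image by taking a dual morphism $f$, using way-below interpolation $f^+((i-1)/n) << f^+(i/n)$ to choose $a_i \in f^+(i/n)$ with $f^+((i-1)/n) \subseteq \-{U_{r_i}(a_i)}$, gluing the elements $u_i(a_i)$ into a single $a \in A$ via \cref{thm:upos-stack-n}, and estimating $\dist(f, \iota(a)) \le 2/n$; and only then (iv) using completeness to convert density into surjectivity. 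None of (i)--(iii) appears in your proposal, and these estimates, not routine verifications, are the core of the proof.
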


In fact, we prove a single result underlying \cref{thm:intro-ctslat-dual-umod,thm:intro-cdlat-dual}, for a ``class of joins $\Phi$ dual to a class of meets $\Psi^\op$'', more precisely for a \emph{sound} class of joins in the sense of \cite{ABLRdacc}, \cite{KScolim}; see \cref{sec:comm}.
This general result, \cref{thm:cts-dual}, says that $\Phi$-continuous lattices are dual to complete stackable $\#U$-$\Psi^\op$-inflattices, \emph{provided that not all $\Phi$-continuous lattices are $\Phi$-algebraic}, i.e., already admit enough morphisms into $2$.
This is a reasonable restriction, since for these other $\Phi$, we instead have a simple $2$-valued duality generalizing the classical Hofmann--Mislove--Stralka duality \cite{HMSlat} between algebraic lattices and meet-semilattices (see \cref{thm:hms}).

Part of the reason we work with general $\Phi$ is to hint at the possibility of generalizing to quantale-enriched posets, or even to enriched categories, which we plan to pursue in future work.
However, in the original context of mere posets, it turns out that essentially the only $\Phi$ are the classical ones:

\begin{theorem}[\cref{thm:sound-omega}]
There are precisely 4 sound classes of joins $\Phi$ for which not every $\Phi$-continuous lattice is $\Phi$-algebraic: ``directed joins'', ``all joins'', and the minor variations including/excluding empty joins.
\end{theorem}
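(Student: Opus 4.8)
The plan is to separate the statement into two independent problems: determine which classes of joins $\Phi$ are sound, and then, for each, decide whether $\Phi$-continuity and $\Phi$-algebraicity coincide. For the first, I would lean on the commutation analysis of \cref{sec:comm}, by which a sound class of joins $\Phi$ is cut out by a dual sound class of meets $\Psi^\op$; in the purely order-theoretic ($2$-enriched) world this pins $\Phi$ down to the classes $\Phi_\kappa$ of $\kappa$-directed joins, for a regular cardinal $\kappa$ (with $\kappa = \aleph_0$ recovering ordinary directed joins and $\Psi_\kappa$ the $\kappa$-small meets), together with the class $\Phi_\infty$ of all joins (dual to the empty class of meets). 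Each occurs in two forms according to whether the empty join, i.e.\ a bottom element, is demanded. Since adjoining a bottom alters neither the way-below relation among the remaining elements nor the stock of compacts, the empty-join variant never changes the answer to the continuous-versus-algebraic question; it merely doubles whatever count the rest of the argument produces.

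The non-coincidence for $\Phi_{\aleph_0}$ and for $\Phi_\infty$ I would exhibit with the single example $\#I = [0,1]$, which is simultaneously a continuous lattice and a completely distributive lattice. Its only directed-join-compact element is $0$ (for any $x > 0$ the directed set $[0,x)$ has join $x$ but no member above $x$), and likewise its only element lying completely below itself is $0$; hence $\#I$ is neither algebraic nor completely algebraic, witnessing $\Phi$-continuous $\ne \Phi$-algebraic for $\Phi_{\aleph_0}$, for $\Phi_\infty$, and for their empty-join twins.

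The real content, and the step I expect to be hardest, is the collapse for uncountable $\kappa$: I claim that once $\kappa > \aleph_0$, every $\kappa$-continuous lattice is already $\kappa$-algebraic, and this is where the ``$\omega$'' of the statement enters. The argument I would run combines interpolation with countable chains. Given $x \ll_\kappa y$, the interpolation property of $\ll_\kappa$ in a $\kappa$-continuous lattice (the analogue of the classical one, available from the $\Phi$-continuous theory) builds an increasing sequence $x = x_0 \ll_\kappa x_1 \ll_\kappa x_2 \ll_\kappa \cdots$ with every $x_n \le y$; put $z := \bigvee_n x_n \le y$. The key point is that $z$ is $\kappa$-compact: for any $\kappa$-directed $D$ with $z \le \bigvee D$, each relation $x_n \ll_\kappa x_{n+1} \le z \le \bigvee D$ yields some $d_n \in D$ with $d_n \ge x_n$, and now --- using $\kappa > \aleph_0$ decisively --- the countable family $\{d_n\}_n$ has an upper bound $d \in D$, so $d \ge \bigvee_n x_n = z$. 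Thus a $\kappa$-compact element always sits between $x$ and $y$, the $\kappa$-compacts below $y$ are cofinal in $\{x : x \ll_\kappa y\}$, and a short parallel computation shows a join of fewer than $\kappa$ many $\kappa$-compacts is again $\kappa$-compact; hence these compacts form a $\kappa$-directed set with join $y$, giving $\kappa$-algebraicity. To finish I would confirm that this countable-absorption step fails precisely for the four surviving classes: for $\Phi_{\aleph_0}$ a merely directed $D$ need not bound the sequence $\{d_n\}_n$, and for $\Phi_\infty$ there is no directedness to exploit at all --- so no sound class other than directed joins, all joins, and their empty-join variants can separate continuity from algebraicity.
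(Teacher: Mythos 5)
There is a genuine gap, and it sits exactly where you placed your first, unproved step. The classification of sound join doctrines that you propose to ``lean on'' is not something the commutation analysis of \cref{sec:comm} provides: \cref{def:comm}--\cref{thm:sound} only set up the correspondence $\Psi = \Phi^*$ and its formal consequences for a sound $\Phi$; they do not constrain which sound $\Phi$ exist. No such classification appears in the paper or the cited literature, and your claimed list is already wrong as stated: the trivial doctrine (posets with a greatest element) is sound, dual to the class of all meets (and dually, ``all joins'' is dual to trivial meets, not to ``the empty class of meets'' --- every meet doctrine contains singletons by \cref{def:phi}\cref{def:phi:unit}). Classifying the sound $\Phi$ with $\omega \in \Phi$ is precisely the content of \cref{thm:sound-omega}, so citing a classification is circular. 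The paper's actual argument, which has no counterpart in your proposal, runs as follows: by \cref{thm:cts-alg} the hypothesis ``some $\Phi$-continuous lattice is not $\Phi$-algebraic'' is equivalent to $\omega \in \Phi$, and by \cref{rmk:sound-omega} to $\omega \notin \Psi$; then \cref{thm:r-way}\cref{thm:r-way:nomega:seq} applied to $\Psi$ shows that every $\psi \in \Psi(\@P_\omega(X))$ is finite, so the finite powerset $\@P_\omega(X)$ is a $\Psi$-ideal in $\@P(X)$ and hence lies in $\Phi$ by soundness (\cref{thm:sound}\cref{thm:sound:idl}); the saturation axioms \cref{def:phi:funct} and \cref{def:phi:proper} of \cref{def:phi}, applied to the surjection $\bigvee : \@P_\omega(X) \to X$ for a join-semilattice $X$ and to the cofinal inclusion of a directed poset into the join-semilattice it generates, then force every directed poset into $\Phi$; hence every member of $\Psi$ has finite cofinality, and a final antichain argument ($n^k$ surjects onto every $m \ge 1$) leaves only the four listed possibilities.

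Relatedly, you have misidentified the hard step. The ``collapse for uncountable $\kappa$'' is \cref{thm:r-way}\cref{thm:r-way:nomega:alg} together with \cref{thm:cts-alg} --- results the paper proves \emph{before} this theorem, by essentially your interpolation-plus-countable-absorption argument --- and it can only analyze doctrines already known to be on your list; it is powerless to exclude a hypothetical sound doctrine that is not of the form ``$\kappa$-directed joins'' or ``all joins'', which is what the theorem demands. The parts of your proposal that are correct ($[0,1]$ witnessing continuity $\ne$ algebraicity when $\omega \in \Phi$, the remark that empty joins affect only the compactness of $\bot$, and the collapse argument itself) are prior results or easy observations in the paper, and together they leave the theorem unproved.
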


\paragraph*{Acknowledgments}

I would like to thank the anonymous referee for numerous helpful comments and suggestions that improved the presentation of the paper.
Research partially supported by NSF grant DMS-2224709.

\section{$\Phi$-continuous lattices}

We assume familiarity with basic category theory.
For a category $\!C$, $\!C(X,Y)$ will denote the hom-set of morphisms from $X$ to $Y$, while $\!C^\op$ will denote the opposite category; this includes opposite posets.
We let $\!{Pos}$ denote the category of posets, $\!{Sup}$ denote the category of suplattices (i.e., complete lattices with join-preserving maps as morphisms), $\!{Inf}$ denote the category of inflattices, and $\!{CLat} = \!{Sup} \cap \!{Inf}$ denote the category of complete lattices.
These are all locally ordered categories: each hom-set is partially ordered pointwise, and composition is monotone on both sides.
For $f : X -> Y \in \!{Pos}$ left adjoint to $g : Y -> X$, we will write $f = g^+$ and $g = f^\times$.
We will frequently use the ``mate calculus'': for monotone $h, k$, we have $h \circ g \le k \iff h \le k \circ f$.

For a poset $X$, we let $\@L(X)$ denote the poset of lower sets $\phi \subseteq X$, ordered via $\subseteq$.
Then $\@L : \!{Pos} -> \!{Pos}$ is the free suplattice monad, where the monad structure consists of:
\begin{itemize}
\item  unit $\down = \down_X : X -> \@L(X)$, where $\down x = \{y \in X \mid y \le x\}$ is the principal ideal below $x$;
\item  multiplication ${\bigcup} : \@L(\@L(X)) -> \@L(X)$;
\item  $f : X -> Y \in \!{Pos}$ inducing $f_* = \@L(f) : \@L(X) -> \@L(Y) \in \!{Sup}$, where $f_*(\phi) = \bigcup_{x \in \phi} \down f(x)$.
\end{itemize}

We now review the theory of ``relative'' suplattices for a ``class of joins'' $\Phi$.
This is a special case of the theory of ``classes of colimits'' in enriched category theory \cite{AKsat}, \cite{ABLRdacc}, \cite{KScolim}, and has also been well-studied in the order theory literature as ``$Z$-completeness'' \cite{WWTzpos}, \cite{BEzcts}.
We will use notation and terminology based on that from enriched categories.

\begin{definition}
\label{def:phi}
A \defn{join doctrine} is a class $\Phi$ of posets $\phi$, thought of as indexing posets for certain joins $\bigvee_{x \in \phi} f(x)$ of monotone $f : \phi -> Y$.
We require $\Phi$ to obey the following ``saturation'' conditions:
\begin{enumerate}[label=(\roman*)]
\item \label{def:phi:unit}
The singleton poset $\*1$ is in $\Phi$.
\item \label{def:phi:mult}
If $\phi$ is a poset which is a union $\bigcup \Psi$ of a set $\Psi \subseteq \Phi$ of subposets $\psi \subseteq \phi$ which are in $\Phi$, and also $\Psi$ (as a poset under $\subseteq$) is in $\Phi$, then $\phi \in \Phi$.
\item \label{def:phi:funct}
If $f : \phi -> \psi$ is a monotone map with cofinal image, and $\phi \in \Phi$, then $\psi \in \Phi$.
\item \label{def:phi:proper}
If $\phi \subseteq \psi$ is a cofinal subposet, and $\psi \in \Phi$, then $\phi \in \Phi$.
\end{enumerate}
A \defn{$\Phi$-join} in a poset $X$ is a join of a subset $\phi \subseteq X$ such that $\phi \in \Phi$.
A \defn{$\Phi$-suplattice} is a poset with all $\Phi$-joins; we denote the category of all such (and monotone $\Phi$-join-preserving maps) by $\Phi\!{Sup}$.
A \defn{$\Phi$-ideal} in a $\Phi$-suplattice is a lower sub-$\Phi$-suplattice.
The \defn{free $\Phi$-suplattice} generated by a poset $X$ is the subset $\Phi(X) \subseteq \@L(X)$ of all lower subsets of $X$ in $\Phi$.
Note that for a poset $\phi$, we have $\phi \in \Phi \iff \phi \in \Phi(\phi)$; we thereby identify the class of posets $\Phi$ with the submonad $\Phi \subseteq \@L$.
\end{definition}

\begin{example}
\leavevmode
\begin{itemize}
\item
The ``class of directed joins'' is given by the join doctrine $\Phi :=$ all directed posets, for which a $\Phi$-suplattice is a directed-complete poset (DCPO), a $\Phi$-ideal is a Scott-closed subset, and $\Phi(X)$ is the ideal completion of $X$ (note: \emph{not} ``$\Phi$-ideal completion'').
\item
The ``class of finite joins'' is given by $\Phi :=$ all posets with finite cofinality.
\item
The ``class of all joins'' is given by $\Phi :=$ all posets.
\item
The least join doctrine, of ``trivial joins'', is given by $\Phi :=$ posets with a greatest element.
\end{itemize}
\end{example}

\begin{remark}
\label{rmk:phi-proper}
In \cite{AKsat} and \cite{KScolim}, a more general notion of ``class of colimits'' is considered, consisting in the posets case of an arbitrary submonad $\Phi \subseteq \@L$, i.e., an assignment to each poset $X$ of a set of lower sets $\Phi(X) \subseteq \@L(X)$ closed under the monad operations on $\@L$.

The precise connection with our definition of ``join doctrine'' as a class of posets is as follows.
Each join doctrine $\Phi$ induces a free $\Phi$-suplattice submonad as above; this yields an order-embedding
\begin{align*}
\{\text{join doctrines}\} `--> \{\text{submonads of } \@L\},
\end{align*}
whose image consists of those submonads $\Phi \subseteq \@L$ obeying the additional ``saturation'' condition
\begin{enumerate}
\refitem{($*$)}
\label{rmk:phi-proper:proper}
for each order-embedding between posets $f : X `-> Y$, we have $\Phi(X) = f_*^{-1}(\Phi(Y))$.
\end{enumerate}
This condition is implied by condition \cref{def:phi:proper} in \cref{def:phi} of join doctrine, and conversely, ensures that $\{\phi \in \!{Pos} \mid \phi \in \Phi(\phi)\}$ is a join doctrine inducing the submonad $\Phi$.

An example of a submonad not obeying \cref{rmk:phi-proper:proper} is $\Phi(X) := \{\phi \in \@L(X) \mid \phi \text{ has an upper bound in } X\}$, which yields the ``class of bounded joins''.
However, \cref{rmk:phi-proper:proper} is automatic for the $\Phi$ suitable for our duality purposes, which is why we use the simpler definition of ``join doctrine''; see \cref{rmk:comm-proper}.
\end{remark}

\begin{definition}
\label{def:phi-way}
Let $\Phi$ be a join doctrine, $X$ be a $\Phi$-suplattice.
We define, for $x, y \in X$,
\begin{gather*}
\begin{aligned}
\waydown = \waydown^\Phi_X : X &--> \@L(X) \\
x &|--> \bigcap \{\phi \in \Phi(X) \mid x \le \bigvee \phi\},
\end{aligned} \\
x << y  \coloniff  x <<^\Phi y  \coloniff  x \in \waydown y.
\end{gather*}
We call $x \in X$ \defn{$\Phi$-compact} (\emph{$\Phi$-atomic} in \cite{KScolim}) if $x <<^\Phi x$, i.e., whenever $\bigvee_i y_i$ is a $\Phi$-join $\ge x$, then some $y_i \ge x$, i.e., the indicator function of $\up x : X -> 2$ preserves $\Phi$-joins.
Denote these by
\begin{equation*}
X_\Phi := \{x \in X \mid x <<^\Phi x\}.
\end{equation*}
We call $X$ \defn{$\Phi$-algebraic} if it is generated under $\Phi$-joins by $X_\Phi \subseteq X$.
In that case, it is easy to see that in fact, for each $x \in X$ the set $X_\Phi \cap \down x$ belongs to $\Phi(X_\Phi)$ and has join $x$; and this yields an order-isomorphism $X \cong \Phi(X_\Phi)$.
Conversely, for any poset $Y$, we easily have that $\Phi(Y)$ is $\Phi$-algebraic, with $\Phi(Y)_\Phi = \{\text{principal ideals}\} \cong Y$.
\end{definition}

\begin{proposition}
\label{thm:cts}
Let $\Phi$ be a join doctrine, $X$ be a $\Phi$-suplattice.
The following are equivalent:
\begin{enumerate}[label=(\roman*)]
\item \label{thm:cts:join}
For each $x \in X$, there is a $\phi \in \Phi(X)$ such that $\phi \subseteq \waydown x$ and $x \le \bigvee \phi$, whence in fact $\phi = \waydown x$.
\item \label{thm:cts:way}
${\bigvee} : \Phi(X) -> X$ has a left adjoint, namely $\waydown$.
\end{enumerate}
If $X$ is a complete lattice, these are further equivalent to:
\begin{enumerate}[resume*]
\item \label{thm:cts:meet}
${\bigvee} : \Phi(X) -> X$ preserves meets.
\item \label{thm:cts:dist}
Arbitrary meets distribute over $\Phi$-joins: if $\bigvee_{j \in J_i} x_{i,j}$ is a $\Phi$-join for each $i \in I$, then
\begin{align*}
\bigwedge_{i \in I} \bigvee_{j \in J_i} x_{i,j} = \bigvee_{(j_i)_i \in \prod_i J_i} \bigwedge_{i \in I} x_{i,j_i}.
\end{align*}
\end{enumerate}
All of these hold if $X$ is algebraic, with $\waydown = \down_* : \Phi(X_\Phi) -> \Phi(\Phi(X_\Phi))$, i.e.,
\begin{equation*}
x << y \iff \exists z \in X_\Phi\, (x \le z \le y).
\end{equation*}
\end{proposition}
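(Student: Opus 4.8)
The plan is to dispatch (i)$\Leftrightarrow$(ii) by pure adjunction bookkeeping, to run the complete-lattice equivalences through a cycle involving (iii) and (iv), and to read off the algebraic case from the isomorphism $X\cong\Phi(X_\Phi)$. For (i)$\Leftrightarrow$(ii): since $\bigvee:\Phi(X)\to X$ is monotone, a left adjoint, if it exists, must send $x$ to the least $\phi\in\Phi(X)$ with $x\le\bigvee\phi$, and the defining biconditional $\waydown x\subseteq\phi\iff x\le\bigvee\phi$ has its ``$\Leftarrow$'' direction for free from the definition of $\waydown$. First I would record that $\waydown$ is monotone and that ``$\Rightarrow$'' holds precisely when $\waydown x$ is \emph{realized}, i.e.\ $\waydown x\in\Phi(X)$ and $x\le\bigvee\waydown x$ --- which is exactly (i); the ``whence $\phi=\waydown x$'' clause is immediate, since any realizing $\phi$ is among the sets intersected to form $\waydown x$. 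Thus (i) says precisely that $\waydown$ is left adjoint to $\bigvee$, which is (ii).

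Now assume $X$ is a complete lattice. Two directions are easy: (ii)$\Rightarrow$(iii), because a map with a left adjoint is a right adjoint and so preserves all meets that exist; and (i)$\Rightarrow$(iv), where, given $\Phi$-joins $y_i=\bigvee\phi_i$ with $\phi_i$ the lower closure of $\{x_{i,j}\mid j\in J_i\}$ and $t:=\bigwedge_i y_i$, one uses that $\waydown t\in\Phi(X)$ (by (i)) with $\waydown t\subseteq\phi_i$ for every $i$ (as $t\le\bigvee\phi_i$); hence each $a\in\waydown t$ lies below some $x_{i,j_i}$, so $a\le\bigwedge_i x_{i,j_i}$, giving $t\le\bigvee\waydown t\le\bigvee_{(j_i)}\bigwedge_i x_{i,j_i}$, the reverse inequality being automatic. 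The equivalence (iii)$\Leftrightarrow$(iv) then rests on identifying any meet $\bigwedge_i\phi_i$ that exists in $\Phi(X)$ with the largest $\Phi$-ideal contained in $\bigcap_i\phi_i$: each selection-meet $\bigwedge_i a_i$ generates a \emph{principal} ideal $\down(\bigwedge_i a_i)\subseteq\bigcap_i\phi_i$, which lies in $\Phi(X)$ (principal ideals have a top, hence belong to every join doctrine) and so sits below that meet, and this single principal-ideal comparison yields both implications.

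The step I expect to be the real obstacle is the converse passage back to (i): realizing $\waydown x$ as a genuine $\Phi$-ideal from the ``global'' laws (iii)/(iv) alone. Applying (iv) to the family of \emph{all} $\phi\in\Phi(X)$ with $x\le\bigvee\phi$ quickly gives $x\le\bigvee\waydown x$, and the constant selections identify $\waydown x$ with the downward closure of the image of $\bigwedge$ on the product of these $\phi$'s; but that product need not lie in $\Phi$, so the cofinal-image axiom does not directly deliver $\waydown x\in\Phi(X)$. For the cases that actually matter ($\Phi=$ directed or all joins) this is painless, since there $\Phi(X)$ is closed under arbitrary intersection when $X$ is complete --- for directed joins, $a,b\in\bigcap_i\phi_i$ forces $a\vee b\le$ some element of each $\phi_i$, hence $a\vee b\in\bigcap_i\phi_i$ --- so $\waydown x=\bigcap\{\phi\mid x\le\bigvee\phi\}$ is automatically a $\Phi$-ideal. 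In general one must instead invoke the union/multiplication and cofinality saturation axioms of \cref{def:phi} together with the completeness of $X$ to extract a genuinely $\Phi$-indexed cofinal family realizing $\waydown x$; this is where I anticipate the bulk of the technical work.

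Finally, for the algebraic case I would use $X\cong\Phi(X_\Phi)$ to prove directly that $\waydown y=\{x\mid\exists z\in X_\Phi,\ x\le z\le y\}=:S_y$. The inclusion $S_y\subseteq\waydown y$ holds because each $\Phi$-compact $z\le y$ satisfies $z\ll z$, so $z\in\waydown z\subseteq\waydown y$ by monotonicity of $\waydown$, and $\waydown y$ is a lower set. For the reverse inclusion, $S_y$ is a lower set in which $X_\Phi\cap\down y$ is cofinal, and $X_\Phi\cap\down y\in\Phi$ (it is the $\Phi$-ideal corresponding to $y$ under algebraicity), so $S_y\in\Phi(X)$ by the cofinal-image axiom of \cref{def:phi}, with $\bigvee S_y=\bigvee(X_\Phi\cap\down y)=y$; hence $S_y$ is one of the sets intersected to form $\waydown y$, giving $\waydown y\subseteq S_y$. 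This both realizes (i) and, read through the isomorphism, identifies $\waydown$ with $\down_*:\Phi(X_\Phi)\to\Phi(\Phi(X_\Phi))$, yielding the stated formula $x\ll y\iff\exists z\in X_\Phi\,(x\le z\le y)$.
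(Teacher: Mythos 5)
Your diagnosis of where the real difficulty lies is exactly right, but the conclusion is worse than you anticipate: the step you defer cannot be completed, because for a general join doctrine the implication (iv)~$\Rightarrow$~(i) is simply false. Take $\Phi$ to be the paper's own example of ``finite joins'' (posets of finite cofinality) and $X = \mathcal{P}(\omega)$, so that $\Phi(X)$ consists of the finitely generated lower sets $\down F$. Powersets are completely distributive, so arbitrary meets distribute over \emph{all} joins in $X$; hence (iv) holds, and so does (iii) on the ``preserves existing meets'' reading (by your own principal-ideal argument, a meet that exists in $\Phi(X)$ must be the intersection, and the distributivity equation in particular preserves those). But $X$ is not $\Phi$-continuous: $\waydown \omega$ contains only $\emptyset$ and the singletons, since any $a$ containing two distinct points $n, m$ is excluded by the finite cover $\{\omega \setminus \{n\},\ \omega \setminus \{m\}\}$; consequently no $\phi \in \Phi(X)$ with $\phi \subseteq \waydown \omega$ has $\bigvee \phi = \omega$, so (i) and (ii) fail. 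Thus no appeal to the saturation axioms of \cref{def:phi}, however ingenious, will extract the ``genuinely $\Phi$-indexed cofinal family'' you hope for; and the same problem infects your (iii)~$\iff$~(iv) step, of which only the direction (iv)~$\Rightarrow$~(iii) follows from the principal-ideal comparison, since (iii) says nothing about families whose intersection fails to land in $\Phi(X)$.

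What the statement needs --- and what the paper's one-line disposal of (ii)~$\iff$~(iii) ``by the adjoint functor theorem'' tacitly requires --- is that $\Phi(X)$ \emph{have} all meets when $X$ is complete, equivalently (again by the principal-ideal observation) that $\Phi(X)$ be closed under intersections; this is precisely the notion of \emph{continuous} join doctrine introduced later in \cref{thm:phi-cts}, and it covers every doctrine the paper actually uses, since sound doctrines are continuous by \cref{thm:sound}. Nor can one rescue the general statement by reinterpreting (iii)/(iv) as asserting existence of the relevant meets: $[0,1]^2$ \emph{is} finite-joins-continuous, yet $\bigcap \{\down \{(a,1),(1,b)\} \mid a + b \ge 1\} = \{(x,y) \mid x + y \le 1\}$ is not finitely generated, so under that reading (ii)~$\Rightarrow$~(iii) breaks instead. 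Once closure under intersections is available, your outline closes up and coincides with the paper's proof: (i)~$\iff$~(ii) by adjunction bookkeeping; (ii)~$\iff$~(iii) by the adjoint functor theorem, now legitimately applicable; (iii)~$\iff$~(iv) by identifying meets in $\Phi(X)$ with intersections; and $\waydown x = \bigcap \{\phi \in \Phi(X) \mid x \le \bigvee \phi\}$ then lies in $\Phi(X)$ outright, which is exactly your ``painless'' argument for the directed and all-joins doctrines. Your treatments of (i)~$\iff$~(ii), of (ii)~$\Rightarrow$~(iii), of (i)~$\Rightarrow$~(iv), and of the algebraic case (which the paper leaves implicit) are correct as written; the gap is real, but it is a gap in the proposition's stated generality rather than missing technical work on your part.
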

If \cref{thm:cts:join}, \cref{thm:cts:way} hold for a $\Phi$-suplattice $X$, we call $X$ \defn{$\Phi$-continuous}.
If furthermore $X$ is a complete lattice, we call $X$ a \defn{$\Phi$-continuous lattice}, or a \defn{$\Phi$-algebraic lattice} if $X$ is algebraic.
\begin{proof}
\cref{thm:cts:join}$\iff$\cref{thm:cts:way} since it is easily seen that $\phi$ in \cref{thm:cts:join} must be $\waydown x$.

\cref{thm:cts:way}$\iff$\cref{thm:cts:meet} by the adjoint functor theorem.

\cref{thm:cts:meet}$\iff$\cref{thm:cts:dist} because the latter says
$\bigwedge_{i \in I} \bigvee \bigcup_{j \in J_i} \down x_{i,j_i} = \bigvee \bigcap_{i \in I} \bigcup_{j \in J_i} \down x_{i,j_i}$.
\end{proof}

\begin{proposition}
\label{thm:way-props}
In every $\Phi$-suplattice,
\begin{enumerate}[label=(\alph*)]
\item
$\waydown x \subseteq \down x$, i.e., $y << x \implies y \le x$.
\item
$x' \le x << y \le y' \implies x' << y'$.
\end{enumerate}
In a $\Phi$-continuous $\Phi$-suplattice,
\begin{enumerate}[resume*]
\item \label{thm:way-props:interp}
(interpolation) $\waydown = \bigcup \waydown_* \waydown$, i.e.,
$\waydown x = \bigcup_{y << x} \waydown y$, i.e.,
\begin{equation*}
z << x  \iff  \exists y\, (z << y << x).
\end{equation*}
\end{enumerate}
\end{proposition}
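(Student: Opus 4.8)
The plan is to derive (a) and (b) directly from the definition $\waydown x = \bigcap\{\phi \in \Phi(X) \mid x \le \bigvee\phi\}$, and then build the interpolation (c) on top of them, the real content lying in a single saturation argument.

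For (a), I would first note that every poset with a greatest element lies in $\Phi$: the map $\*1 -> \down x$ picking out $x$ has cofinal image, so $\down x \in \Phi$ by \cref{def:phi:unit} together with \cref{def:phi:funct}. Hence $\down x \in \Phi(X)$, and since $\bigvee \down x = x$, the set $\down x$ is among those whose intersection defines $\waydown x$; thus $\waydown x \subseteq \down x$. For (b), given $x' \le x << y \le y'$, I would take an arbitrary $\phi \in \Phi(X)$ with $y' \le \bigvee \phi$; then $y \le y' \le \bigvee\phi$ forces $\waydown y \subseteq \phi$, so $x \in \waydown y \subseteq \phi$, and as $\phi$ is a lower set containing $x \ge x'$ we get $x' \in \phi$. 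Since $\phi$ was arbitrary, $x' \in \waydown y'$, i.e.\ $x' << y'$.

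For the interpolation (c), set $\psi := \bigcup_{y << x} \waydown y$. The inclusion $\psi \subseteq \waydown x$ is immediate from (a) and (b): if $z << y << x$, then $z \le y$ by (a), so $z \le y << x \le x$ gives $z << x$ by (b). The substantive inclusion is $\waydown x \subseteq \psi$, for which I would show that $\psi \in \Phi(X)$ and $x \le \bigvee\psi$; then $\psi$ is one of the sets whose intersection defines $\waydown x$, giving $\waydown x \subseteq \psi$. That $\psi$ is a lower set is clear. For $\psi \in \Phi$, observe that $y \mapsto \waydown y$ is monotone by (b), so it carries $\waydown x$ (which lies in $\Phi(X)$ by $\Phi$-continuity) surjectively onto the family $\Psi := \{\waydown y \mid y << x\}$; hence $\Psi \in \Phi$ by \cref{def:phi:funct}, and since $\psi = \bigcup \Psi$ with each member $\waydown y \in \Phi(X)$, \cref{def:phi:mult} yields $\psi \in \Phi$. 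Finally, $\Phi$-continuity together with (a) gives $\bigvee\waydown y = y$ for each $y$, so $\bigvee\psi \ge \bigvee\waydown y = y$ for every $y << x$; thus $\bigvee\psi$ is an upper bound of $\waydown x$, whence $\bigvee\psi \ge \bigvee\waydown x = x$.

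The main obstacle is the verification that $\psi \in \Phi$: this is exactly where the saturation axioms \cref{def:phi:mult} and \cref{def:phi:funct} are needed, and where one must be careful that the indexing family $\Psi$, viewed as a poset under $\subseteq$, is itself a $\Phi$-join, which is obtained here as a cofinal (indeed surjective) monotone image of $\waydown x$. Everything else is routine bookkeeping with lower sets and the order-theoretic characterization of $\waydown$ from \cref{thm:cts}.
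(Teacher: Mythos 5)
Your proof is correct; your (a) and (b) are just careful write-ups of what the paper dismisses as obvious, but for interpolation you take a genuinely different route. The paper's proof of (c) is a two-line piece of formal monad calculus: since $X$ is an algebra of the monad $\Phi$, the composites $\bigvee \circ \bigcup$ and $\bigvee \circ \bigvee_*$ from $\Phi(\Phi(X))$ to $X$ agree; taking left adjoints (namely $\waydown \dashv \bigvee$ by continuity, $\down_* \dashv \bigcup$ because the free algebra $\Phi(X)$ is $\Phi$-algebraic, and $\waydown_* \dashv \bigvee_*$ by applying the $2$-functor $\Phi$ to $\waydown \dashv \bigvee$) yields $\down_* \waydown = \waydown_* \waydown$, and composing with $\bigcup$ and using the monad law $\bigcup \circ \down_* = \mathrm{id}$ gives $\waydown = \bigcup \waydown_* \waydown$. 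You instead argue element-wise: you build the candidate $\psi = \bigcup_{y \ll x} \waydown y$, prove $\psi \in \Phi(X)$ by hand from the saturation axioms (the surjective monotone image $\Psi = \{\waydown y \mid y \ll x\}$ of $\waydown x \in \Phi$ lies in $\Phi$ by \cref{def:phi:funct}, and then \cref{def:phi:mult} applies), show $x \le \bigvee \psi$, and conclude $\waydown x \subseteq \psi$ by minimality of $\waydown x$; the reverse inclusion follows from (a) and (b). This hand verification is precisely the content that the paper's argument packages abstractly into functoriality of the monad $\Phi$ (well-definedness of $\waydown_* = \Phi(\waydown)$ on $\Phi$-subsets) and the monad multiplication. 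What your version buys is transparency and self-containedness: it shows exactly where each saturation condition of \cref{def:phi} is used, with no appeal to mates, algebra laws, or $2$-functoriality. What the paper's version buys is brevity and portability: being element-free, it carries over verbatim to the enriched setting (classes of colimits) that the paper cites as motivation for working with general $\Phi$.
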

\begin{proof}
The first two are obvious.
For interpolation:
since $X$ is an algebra of the monad $\Phi$, we have
$\bigvee \bigcup = \bigvee \bigvee_* : \Phi(\Phi(X)) -> X$;
taking left adjoints yields
$\down_* \waydown = \waydown_* \waydown$;
now take $\bigcup$.
\end{proof}

A \defn{morphism of $\Phi$-continuous lattices} is a meet-preserving, $\Phi$-join-preserving map between $\Phi$-continuous lattices.
Let $\Phi\!{CtsLat}$ denote the category of $\Phi$-continuous lattices and morphisms, and $\Phi\!{AlgLat} \subseteq \Phi\!{CtsLat}$ denote the full subcategory of $\Phi$-algebraic lattices.

\begin{proposition}
\label{thm:cts-ladj}
Let $f : X -> Y$ be a right adjoint between $\Phi$-continuous $\Phi$-suplattices, with left adjoint $f^+ : Y -> X$.
Then $f$ preserves $\Phi$-joins iff $f^+$ preserves $<<$.
Thus
\begin{align*}
\Phi\!{CtsLat}(X, Y)^\op &\cong {<<^\Phi}\!{Sup}(Y, X) := \{f^+ : Y -> X \mid f^+ \text{ preserves $<<, \bigvee$}\} \\
f &|-> f^+.
\end{align*}
\end{proposition}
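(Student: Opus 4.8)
The plan is to unwind the relation $z << x$ through its definition $z \in \waydown x = \bigcap\{\phi\in\Phi(X)\mid x\le\bigvee\phi\}$ and play it against the adjunction $f^+\dashv f$, i.e.\ $f^+ y\le x\iff y\le f(x)$. Two preliminaries cost nothing. First, since $f^+$ is a left adjoint it preserves all existing joins, in particular all $\Phi$-joins, so it is automatically a $\Phi$-join-preserving map and the only content is its interaction with $<<$. Second, since $f$ is monotone, for any $\phi\in\Phi(X)$ the corestriction $\phi\to f_*\phi$ has cofinal image, so by saturation condition \cref{def:phi:funct} we have $f_*\phi\in\Phi(Y)$, with $\bigvee f_*\phi=\bigvee_{x\in\phi}f(x)$. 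Hence ``$f$ preserves $\Phi$-joins'' means precisely $f(\bigvee\phi)=\bigvee f_*\phi$ for all $\phi\in\Phi(X)$, the inequality $\ge$ being automatic from monotonicity.

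For the forward implication, I assume $f$ preserves $\Phi$-joins and take $y'<<y$ in $Y$, aiming to show $f^+y'\in\waydown_X(f^+y)$. Let $\phi\in\Phi(X)$ be arbitrary with $f^+y\le\bigvee\phi$; by the adjunction this reads $y\le f(\bigvee\phi)=\bigvee f_*\phi$, where $f_*\phi\in\Phi(Y)$. Then $y'<<y$ forces $y'\in f_*\phi$, i.e.\ $y'\le f(x)$ for some $x\in\phi$, i.e.\ $f^+y'\le x\in\phi$; since $\phi$ is a lower set, $f^+y'\in\phi$. Intersecting over all such $\phi$ gives $f^+y'\in\waydown_X(f^+y)$, that is $f^+y'<<f^+y$. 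This direction uses neither $\Phi$-continuity nor interpolation.

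For the converse, I assume $f^+$ preserves $<<$ and fix $\phi\in\Phi(X)$, aiming to show $f(\bigvee\phi)\le\bigvee f_*\phi$. Here I invoke $\Phi$-continuity of $Y$ in the form $y=\bigvee\waydown_Y y$ from \cref{thm:cts:join}, applied to $y:=f(\bigvee\phi)$, which reduces the claim to proving $y'\le\bigvee f_*\phi$ for every $y'<<y$. Given such a $y'$, the hypothesis on $f^+$ yields $f^+y'<<f^+y=f^+f(\bigvee\phi)\le\bigvee\phi$, where the last step is the counit $f^+f\le\mathrm{id}$. Since $\phi\in\Phi(X)$ witnesses $f^+y\le\bigvee\phi$, the definition of $\waydown_X$ gives $f^+y'\in\phi$, whence $y'\le ff^+y'\le\bigvee_{x\in\phi}f(x)=\bigvee f_*\phi$; joining over $y'\in\waydown_Y y$ finishes the implication. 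I expect this converse to be the \emph{crux}: it is the only place where $\Phi$-continuity, here of the codomain $Y$, is genuinely used, and the argument hinges on correctly combining the counit with the ``intersection'' description of $\waydown_X$.

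For the displayed bijection, I would argue as follows. A morphism $f\in\Phi\!{CtsLat}(X,Y)$ preserves arbitrary meets between complete lattices and is therefore a right adjoint, with left adjoint $f^+$ preserving all joins; the equivalence just established then gives that $f^+$ preserves $<<$, so $f^+\in{<<^\Phi}\!{Sup}(Y,X)$. Conversely any $g\in{<<^\Phi}\!{Sup}(Y,X)$ preserves all joins, hence is a left adjoint $g=f^+$ for $f:=g^\times$, which preserves meets; the equivalence gives that $f$ preserves $\Phi$-joins, so $f\in\Phi\!{CtsLat}(X,Y)$. Uniqueness of adjoints makes $f\mapsto f^+$ a bijection with inverse $g\mapsto g^\times$, and the standard order-reversal $f\le f'\iff f^+\ge (f')^+$ shows it is an anti-isomorphism of the pointwise orders, i.e.\ an isomorphism $\Phi\!{CtsLat}(X,Y)^\op\cong{<<^\Phi}\!{Sup}(Y,X)$.
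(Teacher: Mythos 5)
Your proof is correct, but it takes a genuinely different route from the paper's. The paper disposes of the equivalence in one line of mate calculus: ``$f$ preserves $\Phi$-joins'' is the equation $f \circ \bigvee = \bigvee \circ f_* : \Phi(X) \to Y$, and taking left adjoints of both sides --- using that $\Phi$-continuity makes $\waydown$ left adjoint to $\bigvee$ (\cref{thm:cts}), and that $(f^+)_*$ is left adjoint to $f_*$ --- turns it into $\waydown \circ f^+ = (f^+)_* \circ \waydown : Y \to \Phi(X)$, which is read off as preservation of $<<$. You instead unwind $\waydown x = \bigcap \{\phi \in \Phi(X) \mid x \le \bigvee \phi\}$ and chase elements through the unit and counit of $f^+ \dashv f$, pushing $\Phi$-sets forward via saturation condition \cref{def:phi:funct} of \cref{def:phi}. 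Both arguments are complete, and the comparison is instructive. The paper's proof is shorter, matches the $2$-categorical style used elsewhere (cf.\ the proof of interpolation in \cref{thm:way-props}), and generalizes readily to enriched settings; but it silently identifies ``$f^+$ preserves $<<$'' with the a priori stronger equation $\waydown f^+ = (f^+)_* \waydown$ (one inclusion is literally preservation of $<<$; the reverse inclusion requires $\Phi$-continuity of $Y$, join-preservation of $f^+$, and saturation --- essentially the ingredients your argument deploys). Your element-wise proof works with the weaker statement directly, so it closes that small gap, and it also isolates exactly where the hypotheses enter: your forward direction uses no continuity at all, and your converse uses only $\Phi$-continuity of $Y$, not of $X$. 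Finally, your derivation of the displayed anti-isomorphism (adjoint functor theorem in both directions, uniqueness of adjoints, order-reversal of mates) is the routine argument that the paper compresses into the word ``Thus''.
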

\begin{proof}
$f \bigvee = \bigvee f_* : \Phi(X) -> Y$ iff, taking left adjoints,
$\waydown f^+ = (f^+)_* \waydown : Y -> \Phi(X)$.
\end{proof}

\begin{proposition}
\label{thm:phi-cts}
Let $\Phi$ be a join doctrine.
The following are equivalent:
\begin{enumerate}[label=(\roman*)]
\item \label{thm:phi-cts:inf}
For every complete lattice $X$, $\Phi(X) \subseteq \@L(X)$ is closed under meets.
\item \label{thm:phi-cts:low}
For every poset $X$, $\Phi(\@L(X)) \subseteq \@L(\@L(X))$ is closed under meets.
\item \label{thm:phi-cts:cts}
For every poset $X$, $\@L(X)$ is $\Phi$-continuous.
\end{enumerate}
\end{proposition}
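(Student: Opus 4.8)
The plan is to prove the cycle \ref{thm:phi-cts:inf}$\Rightarrow$\ref{thm:phi-cts:low}$\Rightarrow$\ref{thm:phi-cts:cts}$\Rightarrow$\ref{thm:phi-cts:inf}. The implication \ref{thm:phi-cts:inf}$\Rightarrow$\ref{thm:phi-cts:low} is immediate, since each $\@L(X)$ is itself a complete lattice. The engine for the remaining two is the adjunction $\bigvee \dashv \down$ between $\@L(X)$ and a complete lattice $X$, together with the preimage map of $\bigvee : \@L(X) \to X$, which I write $\bigvee^{-1}$. The one computation I would isolate at the outset is that, for any $\Xi \in \@L(X)$ and any lower set $\Psi \subseteq \@L(X)$, one has $\Xi \subseteq \bigvee\Psi \iff \bigvee^{-1}(\Xi) \subseteq \Psi$: forwards because $\down x \in \bigvee^{-1}(\Xi)$ for each $x \in \Xi$, backwards because every $\chi$ with $\bigvee\chi \in \Xi$ satisfies $\chi \subseteq \down(\bigvee\chi) \subseteq \Xi$. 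Taking $\Psi$ to range over $\Phi$-ideals, this identifies $\waydown\Xi = \bigcap\{\Psi \in \Phi(\@L(X)) \mid \Xi \le \bigvee\Psi\}$ with the \emph{smallest $\Phi$-ideal containing $\bigvee^{-1}(\Xi)$}, whenever that intersection is itself a $\Phi$-ideal; the same computation gives $\bigvee\bigvee^{-1}(\Xi) = \Xi$ (reflecting that $\@L(X)$ is completely distributive).

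For \ref{thm:phi-cts:low}$\Rightarrow$\ref{thm:phi-cts:cts} I would verify criterion~\ref{thm:cts:join} of \cref{thm:cts} for the complete lattice $\@L(X)$, taking the witness $\phi := \waydown\Xi$. Closure under meets, \ref{thm:phi-cts:low}, makes the intersection $\waydown\Xi$ a $\Phi$-ideal; and since $\waydown\Xi \supseteq \bigvee^{-1}(\Xi)$ we get $\bigvee\waydown\Xi \ge \bigvee\bigvee^{-1}(\Xi) = \Xi$. Thus $\waydown\Xi \in \Phi(\@L(X))$ with $\Xi \le \bigvee\waydown\Xi$, so $\@L(X)$ is $\Phi$-continuous.

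The substantial implication is \ref{thm:phi-cts:cts}$\Rightarrow$\ref{thm:phi-cts:inf}. Given a complete lattice $X$ and a family $\{\phi_k\} \subseteq \Phi(X)$, I must show $\phi := \bigcap_k \phi_k \in \Phi(X)$. First, each $\bigvee^{-1}(\phi_k)$ is a $\Phi$-ideal of $\@L(X)$: the map $x \mapsto \down x$ sends $\phi_k$ into it with cofinal image (any $\chi \in \bigvee^{-1}(\phi_k)$ lies below $\down(\bigvee\chi)$ with $\bigvee\chi \in \phi_k$), so saturation axiom~\ref{def:phi:funct} applies. Second, $\bigvee^{-1}(\phi) = \bigcap_k \bigvee^{-1}(\phi_k)$. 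Now invoke \ref{thm:phi-cts:cts} for the poset $X$: since $\@L(X)$ is $\Phi$-continuous, $\waydown\phi$ is the smallest $\Phi$-ideal containing $\bigvee^{-1}(\phi)$. Because each $\bigvee^{-1}(\phi_k)$ is a $\Phi$-ideal containing $\bigvee^{-1}(\phi)$, minimality forces $\waydown\phi \subseteq \bigvee^{-1}(\phi_k)$ for every $k$, whence $\waydown\phi \subseteq \bigcap_k \bigvee^{-1}(\phi_k) = \bigvee^{-1}(\phi) \subseteq \waydown\phi$, so $\bigvee^{-1}(\phi) = \waydown\phi \in \Phi(\@L(X))$. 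Finally $\bigvee : \bigvee^{-1}(\phi) \to \phi$ is a monotone surjection (with section $\down$), so a second application of \ref{def:phi:funct} gives $\phi \in \Phi$, i.e. $\phi \in \Phi(X)$.

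The main obstacle is this last implication, and within it the squeezing step. Complete distributivity of $\@L(X)$ makes the ``distributive'' half of $\Phi$-continuity automatic, so the genuine content of \ref{thm:phi-cts:cts} is merely that the single ideal $\waydown\phi$ lands in $\Phi(\@L(X))$; the trick is to play its \emph{minimality} off against the externally supplied $\Phi$-ideals $\bigvee^{-1}(\phi_k)$ to pin $\bigvee^{-1}(\phi)$ itself down as a $\Phi$-ideal, after which \ref{def:phi:funct} transports $\Phi$-membership back and forth along the cofinal surjection $\bigvee$. I expect the only delicate bookkeeping to be the biconditional $\Xi \subseteq \bigvee\Psi \iff \bigvee^{-1}(\Xi) \subseteq \Psi$ and the consequent identification of $\waydown$ with this $\bigvee^{-1}$-reflection; everything downstream is formal.
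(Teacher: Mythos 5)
Your cycle \cref{thm:phi-cts:inf}$\implies$\cref{thm:phi-cts:low}$\implies$\cref{thm:phi-cts:cts}$\implies$\cref{thm:phi-cts:inf} matches the paper's, but the two nontrivial implications are executed differently, and your \cref{thm:phi-cts:cts}$\implies$\cref{thm:phi-cts:inf} is correct and genuinely different from the paper's. The paper proves \cref{thm:phi-cts:low}$\implies$\cref{thm:phi-cts:cts} by factoring $\bigcup : \Phi(\@L(X)) \to \@L(X)$ as the inclusion of $\Phi(\@L(X))$ into $\@L(\@L(X))$ (meet-preserving by \cref{thm:phi-cts:low}) followed by $\bigcup : \@L(\@L(X)) \to \@L(X)$ (always meet-preserving), then citing \cref{thm:cts}; and it proves \cref{thm:phi-cts:cts}$\implies$\cref{thm:phi-cts:inf} by showing that $\bigvee_* \waydown_{\@L(X)} : \@L(X) \to \Phi(X)$ is a closure operator whose fixed points are exactly $\Phi(X)$, so that meet-closedness follows formally. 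You instead run everything through the adjunction $\bigvee^{-1} \dashv \bigcup$ (note your $\bigvee^{-1}$ is precisely the paper's $\down_* = \@L(\down)$ when $X$ is complete), identify $\waydown\Xi$ as the least element of $\Phi(\@L(X))$ containing $\bigvee^{-1}(\Xi)$, and then squeeze $\waydown\phi$ between $\bigvee^{-1}(\phi)$ and the $\bigvee^{-1}(\phi_k)$ to get $\bigvee^{-1}(\phi) = \waydown\phi \in \Phi(\@L(X))$, transporting $\Phi$-membership back and forth along cofinal maps via axiom \cref{def:phi:funct}. I checked this implication in detail and it is correct; it makes explicit where the saturation axioms of \cref{def:phi} enter, which the paper's more formal closure-operator argument hides. (Two small slips in your preliminary computation: the "forwards" and "backwards" justifications are attached to the wrong directions, and the direction $\Xi \subseteq \bigvee\Psi \implies \bigvee^{-1}(\Xi) \subseteq \Psi$ also needs that $\Psi$ is a \emph{lower} subset of $\@L(X)$: from $\bigvee\chi \in \Xi \subseteq \bigcup\Psi$ one gets $\bigvee\chi \in \psi$ for some $\psi \in \Psi$, whence $\chi \subseteq \down(\bigvee\chi) \subseteq \psi$, whence $\chi \in \Psi$.)

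There is, however, one genuine gap, in \cref{thm:phi-cts:low}$\implies$\cref{thm:phi-cts:cts}: statement \cref{thm:phi-cts:cts} quantifies over \emph{every poset} $X$, but your argument uses $\bigvee^{-1}(\Xi) = \{\chi \in \@L(X) \mid \bigvee\chi \in \Xi\}$, which presupposes the join map $\bigvee : \@L(X) \to X$, i.e., that $X$ is a complete lattice. As written, you obtain \cref{thm:phi-cts:cts} only for complete lattices; and since your \cref{thm:phi-cts:cts}$\implies$\cref{thm:phi-cts:inf} also consumes \cref{thm:phi-cts:cts} only for complete lattices, your cycle closes on this weakened statement and never proves \cref{thm:phi-cts:cts} as stated. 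The repair is small: for an arbitrary poset $X$, replace $\bigvee^{-1}(\Xi)$ by $\down_*(\Xi) = \{\chi \in \@L(X) \mid \chi \subseteq \down x \text{ for some } x \in \Xi\}$, which is defined for every poset. The same computation gives $\Xi \subseteq \bigcup\Psi \iff \down_*(\Xi) \subseteq \Psi$ for lower $\Psi \subseteq \@L(X)$, as well as $\bigcup \down_*(\Xi) = \Xi$; moreover the family whose intersection defines $\waydown\Xi$ is nonempty, since it contains the principal ideal $\down_{\@L(X)} \Xi \in \Phi(\@L(X))$, so \cref{thm:phi-cts:low} does apply to it. With this substitution your proof of \cref{thm:phi-cts:low}$\implies$\cref{thm:phi-cts:cts} goes through verbatim, and the three-way equivalence is fully established.
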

If these conditions hold, we call $\Phi$ a \defn{continuous} join doctrine.
\begin{proof}
\cref{thm:phi-cts:inf}$\implies$\cref{thm:phi-cts:low} is obvious.

\cref{thm:phi-cts:low}$\implies$\cref{thm:phi-cts:cts} since $\bigcup : \Phi(\@L(X)) -> \@L(X)$ is the composite of the inclusion $\Phi(\@L(X)) `-> \@L(\@L(X))$ and $\bigcup : \@L(\@L(X)) -> \@L(X)$, which both preserve meets, i.e., have left adjoints.

\cref{thm:phi-cts:cts}$\implies$\cref{thm:phi-cts:inf} since the composite $\@L(X) --->{\waydown_{\@L(X)}} \Phi(\@L(X)) --->{\bigvee_*} \Phi(X)$ yields the $\Phi(X)$-closure of each lower set $\psi$: we have
$1_{\@L(X)} \le \bigvee_* \waydown_{\@L(X)}$ because ${\bigcup} \le {\bigvee_*} : \Phi(\@L(X)) -> \Phi(X) \subseteq \@L(X)$, while $\bigvee_* \waydown_{\@L(X)}$ restricted to $\Phi(X) \subseteq \@L(X)$ becomes $\bigvee_* \down_* = 1_{\Phi(X)}$.
\end{proof}

The following are the two main examples of continuous join doctrines:

\begin{example}
\label{ex:cts-dir}
If $\Phi$ is the ``class of directed joins'', i.e., the class of all directed posets, so that $\Phi(X)$ for $X \in \!{Pos}$ is the ideal completion of $X$, then $<<$ is the classical way-below relation, and $\Phi$-continuity and $\Phi$-algebraicity become classical continuity and algebraicity for DCPOs.

Similarly, for any infinite regular cardinal $\kappa$, one can consider $\kappa$-directed joins.
But it turns out that for uncountable $\kappa$, continuity and algebraicity coincide; see \cref{thm:cts-alg}.
\end{example}

\begin{example}
\label{ex:cts-cd}
If $\Phi$ is the ``class of all joins'', i.e., the class of all posets, so that $\Phi(X) = \@L(X)$, then a $\Phi$-continuous lattice is a completely distributive lattice, and $<<$ is the ``way-way-below'' relation sometimes denoted $\lll$; see e.g., \cite[IV-3.31]{GHKLMS}.
\end{example}

Minor variations are to include/exclude empty joins, which only affects $\Phi$-compactness of $\bot$.

\begin{example}[the unit interval]
\label{ex:cts-r}
For any join doctrine $\Phi$, $\#I := [0,1]$ is a $\Phi$-continuous lattice.
Indeed, $<<$ contains $<$, since any $\phi \in \@L(\#I)$ with $r \le \bigvee \phi$ must clearly contain $[0,r)$; thus $r = \bigvee \waydown r$.
\end{example}

We now completely characterize the $<<^\Phi$ relation on $\#I$, by determining which $r \in \#I$ are $\Phi$-compact.

\begin{proposition}
\label{thm:r-way}
Let $\Phi$ be a join doctrine.
\begin{enumerate}[label=(\alph*)]
\item \label{thm:r-way:0}
For every $\Phi$-suplattice $X$, $\bot \in X$ is $\Phi$-compact iff $\emptyset \not\in \Phi$.
In particular, this holds for $0 \in \#I$.
\item \label{thm:r-way:omega}
If $\omega \in \Phi$ (where $\omega$ has the usual linear order), then no $r > 0$ is $\Phi$-compact in $\#I$.
Otherwise:
\begin{enumerate}[label=(\roman*)]
\item \label{thm:r-way:nomega:seq}
For every $\phi \in \Phi$ and $x_0, x_1, \dotsc \in \phi$, there are $i_0 < i_1 < \dotsb$ such that $x_{i_0}, x_{i_1}, \dotsc$ have an upper bound in $\phi$.
In particular, every $x_0 \le x_1 \le \dotsb \in \phi$ has an upper bound.
\item \label{thm:r-way:nomega:alg}
Every $\Phi$-continuous $\Phi$-suplattice $X$ which also has countable increasing joins is $\Phi$-algebraic, with the join of any $x_0 << x_1 << \dotsb \in X$ being $\Phi$-compact.
In particular, every $r > 0$ is $\Phi$-compact in $\#I$.
\end{enumerate}
\end{enumerate}
\end{proposition}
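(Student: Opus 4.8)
The plan is to dispatch the three assertions in order, the combinatorial core being a single monotone map built in \cref{thm:r-way:nomega:seq}, which is then reused in \cref{thm:r-way:nomega:alg}. For \cref{thm:r-way:0} I would compute $\waydown \bot$ directly. Since $\bot \le \bigvee \phi$ holds for every $\phi$, the defining intersection collapses to $\waydown \bot = \bigcap \Phi(X)$, so $\bot$ is $\Phi$-compact exactly when it lies in every $\phi \in \Phi(X)$. If $\emptyset \notin \Phi$ then each $\phi \in \Phi(X)$ is a nonempty lower set and hence contains $\bot$, giving compactness; conversely if $\emptyset \in \Phi$ then $\emptyset \in \Phi(X)$ already witnesses $\bot \notin \bigcap \Phi(X)$. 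Taking $X = \#I$ and $\bot = 0$ gives the stated particular case.

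The easy half of \cref{thm:r-way:omega} is that when $\omega \in \Phi$ and $r > 0$, the strictly increasing sequence $r_n := r(1 - 1/(n+2))$ has underlying poset order-isomorphic to $\omega$, hence in $\Phi$ by \cref{def:phi:funct}, so $r = \bigvee_n r_n$ is a $\Phi$-join all of whose terms are $< r$, witnessing that $r$ is not $\Phi$-compact. For the converse \cref{thm:r-way:nomega:seq} I would argue contrapositively: suppose some $\phi \in \Phi$ carries $x_0, x_1, \dotsc$ no subsequence of which is bounded in $\phi$, i.e.\ no $y \in \phi$ has $x_n \le y$ for infinitely many $n$. The key trick is the map $e : \phi -> \omega$, $e(y) := \max\{n \mid x_n \le y\}$ (with $\max \emptyset := 0$): it is well-defined since each $\{n \mid x_n \le y\}$ is finite by hypothesis, monotone since these index sets grow with $y$, and has cofinal image since $e(x_m) \ge m$. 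By \cref{def:phi:funct} this forces $\omega \in \Phi$, a contradiction. The ``in particular'' for an increasing sequence is immediate, since any bound on a subsequence bounds the whole chain.

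For \cref{thm:r-way:nomega:alg} the crux is that the join $x := \bigvee_n x_n$ of a $<<$-chain $x_0 << x_1 << \dotsb$ is $\Phi$-compact. Given a $\Phi$-join $\bigvee_{i \in \psi} y_i \ge x$ with $\psi \in \Phi$ and $i \mapsto y_i$ monotone, each $x_n << x$ (using \cref{thm:way-props}) together with $x \le \bigvee_i y_i$ produces $i_n \in \psi$ with $x_n \le y_{i_n}$; applying \cref{thm:r-way:nomega:seq} to the sequence $(i_n)$ in $\psi$ yields a subsequence with an upper bound $i^* \in \psi$, so $y_{i^*} \ge y_{i_{n_k}} \ge x_{n_k}$ for cofinally many $k$ and hence $y_{i^*} \ge \bigvee_k x_{n_k} = x$. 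Thus some term of the $\Phi$-join exceeds $x$, i.e.\ $x << x$. To obtain $\Phi$-algebraicity I would, for each $x$ and each $z << x$, interpolate (\cref{thm:way-props:interp}) to build $z = z_0 << z_1 << \dotsb << x$, set $w := \bigvee_n z_n$ (a countable increasing join, available by hypothesis), and observe that $w$ is $\Phi$-compact by the crux, with $z \le w \le x$ and $w << x$; consequently $X_\Phi \cap \down x$ is cofinal in $\waydown x \in \Phi(X)$, hence lies in $\Phi$ by \cref{def:phi:proper} and has join $x$ (\cref{thm:cts}), exhibiting $X$ as generated under $\Phi$-joins by $X_\Phi$. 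Specializing to $X = \#I$, which is $\Phi$-continuous with all joins and satisfies $s < r \implies s << r$ (\cref{ex:cts-r}), every $r > 0$ is the join of the $<<$-chain $r(1 - 1/(n+2))$ and is therefore $\Phi$-compact.

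The main obstacle I expect is \cref{thm:r-way:nomega:seq}: compressing the ``no bounded subsequence'' hypothesis into one monotone map to $\omega$ with cofinal image, so that \cref{def:phi:funct} applies in a single step rather than through a case split between increasing chains and antichains. Once that is in hand, \cref{thm:r-way:nomega:alg} is a fairly mechanical blend of interpolation with this bounded-subsequence property.
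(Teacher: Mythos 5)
Your proposal is correct and follows essentially the same route as the paper: part (i) via the monotone ``last index with $x_n \le y$'' map $\phi \to \omega$ combined with the cofinal-image axiom of join doctrines, and part (ii) by feeding (i) into the interpolation property, with the cofinal-subposet axiom yielding $\Phi$-algebraicity. The only cosmetic difference is that you verify $\Phi$-compactness of $\bigvee_n x_n$ directly against an arbitrary testing $\Phi$-join, whereas the paper instead applies (i) to conclude that the set of elements way-below $x$ is closed under countable increasing joins; these are interchangeable uses of the same lemma.
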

\begin{proof}
\cref{thm:r-way:0} is clear from the definition of $\Phi$-compact.

\cref{thm:r-way:omega}
If $\omega \in \Phi$, then no $r > 0$ is $\Phi$-compact, since $r$ is the join of a sequence in $[0,r)$.
Now suppose $\omega \not\in \Phi$.
Then for $\phi \in \Phi$ and $x_0, x_1, \dotsc \in \phi$, if no infinite subfamily has an upper bound, then we have a monotone map $\phi -> \omega$ taking $\phi \setminus \bigcup_n \up x_n$ to $0$ and each $\up x_n \setminus \bigcup_{m > n} \up x_m$ to $n+1$; since $\omega \not\in \Phi$, this map must have finite image, whence there are $i_0 < i_1 < \dotsb$ with $x_{i_0} \ge x_{i_1} \ge \dotsb$, a contradiction, which proves \cref{thm:r-way:nomega:seq}.
It follows that for a $\Phi$-continuous $\Phi$-suplattice $X$ with countable increasing joins, every $\waydown x \in \Phi(X)$ is closed under countable increasing joins.
In particular, for $x_0 << x_1 << \dotsb \in X$, $x := \bigvee_n x_n$ has $x_n << x$ for each $n$, whence $x << x$.
Now for any $y \in X$ and $x_0 << y$, by interpolation (\cref{thm:way-props}\cref{thm:way-props:interp}) we may find $x_0 << x_1 << \dotsb << y$, whence $x := \bigvee_n x_n$ is $\Phi$-compact with $x_0 \le x << y$; since $y = \bigvee \waydown y$, it follows that $X$ is $\Phi$-algebraic, proving \cref{thm:r-way:nomega:alg}.
\end{proof}

\begin{corollary}
\label{thm:cts-alg}
For a join doctrine $\Phi$, the following are equivalent:
\begin{enumerate}[label=(\roman*)]
\item  $\omega \not\in \Phi$.
\item  $\#I$ is $\Phi$-algebraic.
\item  Every $\Phi$-continuous lattice is $\Phi$-algebraic.
\qed
\end{enumerate}
\end{corollary}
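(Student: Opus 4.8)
The plan is to prove the cycle of implications $(i)\Rightarrow(ii)\Rightarrow(iii)\Rightarrow(i)$, since each link is already essentially available from the preceding results. The statement to be proved is \cref{thm:cts-alg}, characterizing when $\omega \notin \Phi$.

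First I would handle $(i)\Rightarrow(ii)$ and $(iii)\Rightarrow(i)$, which are immediate from \cref{thm:r-way}. For $(i)\Rightarrow(ii)$: if $\omega \notin \Phi$, then by \cref{thm:r-way}\cref{thm:r-way:omega}\cref{thm:r-way:nomega:alg}, applied to the $\Phi$-continuous lattice $\#I$ (which by \cref{ex:cts-r} is always $\Phi$-continuous and manifestly has countable increasing joins, being a complete lattice), we conclude that $\#I$ is $\Phi$-algebraic. For the contrapositive of $(iii)\Rightarrow(i)$: if $\omega \in \Phi$, then by \cref{thm:r-way}\cref{thm:r-way:omega} no $r > 0$ is $\Phi$-compact in $\#I$; since $1 > 0$ is the top element and hence not $\Phi$-compact, $\#I$ cannot be generated under $\Phi$-joins by its $\Phi$-compact elements (any $\Phi$-join of elements $< 1$ together with possibly $0$ still has $\Phi$-compact generators all lying in $\{0\} \cup \{r : r \text{ is } \Phi\text{-compact}\}$, but in fact the only candidate compact element is $0$, whose downset cannot join to $1$), so $\#I$ is not $\Phi$-algebraic. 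This already shows $\neg(i) \Rightarrow \neg(ii)$, giving $(ii)\Rightarrow(i)$ directly; combined with $(iii)\Rightarrow(ii)$ (trivial, since $\#I$ is a $\Phi$-continuous lattice) this closes the loop once I have $(ii)\Rightarrow(iii)$.

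The substantive implication is $(ii)\Rightarrow(iii)$, and I would argue it via $(i)$: assuming $\#I$ is $\Phi$-algebraic, I first recover $\omega \notin \Phi$ (if $\omega \in \Phi$ then by the argument above $\#I$ fails to be $\Phi$-algebraic, contradiction), and then invoke \cref{thm:r-way}\cref{thm:r-way:omega}\cref{thm:r-way:nomega:alg} in full strength: that part asserts that \emph{every} $\Phi$-continuous $\Phi$-suplattice with countable increasing joins is $\Phi$-algebraic. Since a $\Phi$-continuous lattice is by definition a complete lattice, it possesses all joins and in particular countable increasing ones, so the hypothesis of \cref{thm:r-way}\cref{thm:r-way:nomega:alg} is automatically met. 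Hence every $\Phi$-continuous lattice is $\Phi$-algebraic, which is $(iii)$.

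The main obstacle, if any, is purely bookkeeping: confirming that the condition ``has countable increasing joins'' in \cref{thm:r-way}\cref{thm:r-way:nomega:alg} is subsumed by completeness, so that no extra hypothesis sneaks into $(iii)$. Once that is checked, the entire corollary is a repackaging of \cref{thm:r-way}, and indeed the proof reduces to citing the two halves of part \cref{thm:r-way:omega} and observing that $\#I$ serves as a universal witness separating $\omega \in \Phi$ from $\omega \notin \Phi$.
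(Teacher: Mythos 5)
Your proposal is correct and matches the paper's approach: the paper states this corollary with no written proof (marked \qed) precisely because it is immediate from \cref{thm:r-way} in the way you describe — the first half of part \cref{thm:r-way:omega} gives $\neg$(i)$\Rightarrow\neg$(ii) since the only candidate $\Phi$-compact element of $\#I$ is $0$, and part \cref{thm:r-way:nomega:alg} gives (i)$\Rightarrow$(iii) because every complete lattice has countable increasing joins, with (iii)$\Rightarrow$(ii) trivial by \cref{ex:cts-r}. Your bookkeeping check that completeness subsumes the "countable increasing joins" hypothesis is exactly the observation that makes the corollary immediate.
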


\section{Commuting meets and joins}
\label{sec:comm}

We are interested in recovering $\Phi$-continuous lattices from their dual algebras of morphisms (to $2$ or $\#I$).
In order to do so, by general duality theory, the dual algebras must be equipped with all operations which commute with the $\Phi$-continuous lattice operations of arbitrary meets and $\Phi$-joins.
Thus, we now review the theory of classes of commuting meets and joins, again due in the general enriched categories context to \cite{KScolim}, although the posets case is much simpler.

It is convenient to treat a ``class of meets'' as simply the order-dual of a ``class of joins''.
Thus, given a join doctrine $\Phi$, we will refer to $\Phi^\op := \{\phi^\op \mid \phi \in \Phi\}$ as a \defn{meet doctrine}, and a meet indexed by $\phi^\op \in \Phi^\op$ as a \defn{$\Phi^\op$-meet}.
A poset with all $\Phi^\op$-meets is a \defn{$\Phi^\op$-inflattice}, with the category of all such denoted $\Phi^\op\!{Inf}$.
A \defn{$\Phi^\op$-filter} is an upper sub-$\Phi^\op$-inflattice.
The free $\Phi^\op$-inflattice generated by a poset $X$ is $\Phi(X^\op)^\op$.

\begin{definition}[see \cite{KScolim}]
\label{def:comm}
For two join doctrines $\Phi, \Psi$, where we regard $\Psi^\op$ as a meet doctrine, to say that \defn{$\Psi^\op$-meets commute with $\Phi$-joins in $2$} means that for any posets $X, Y$,
\begin{gather*}
\forall \phi \in \Phi(Y)\, \forall \psi \in \Psi(X)\, \forall F : X^\op \times Y -> 2\, \paren[\Big]{\bigwedge_{x \in \psi} \bigvee_{y \in \phi} F(x, y) = \bigvee_{y \in \phi} \bigwedge_{x \in \psi} F(x, y)}
\end{gather*}
(where $F$ runs over monotone maps).
By currying $F$, this is equivalent to
\begin{gather*}
\forall \phi \in \Phi(Y)\, \forall \psi \in \Psi(X)\, \forall f : Y -> \@L(X)\, \paren[\Big]{\psi \subseteq \bigcup_{y \in \phi} f(y) \iff \exists y \in \phi\, (\psi \subseteq f(y))} \\
\iff \forall \psi \in \Psi(X)\, (\psi \in \@L(X) \text{ is $\Phi$-compact}).
\end{gather*}
We write $\Phi^*(X) := \@L(X)_\Phi$ for the $\Phi$-compact lower sets $\psi \subseteq X$, i.e., those indexing meets commuting with $\Phi$-joins in $2$.
Note that by order-duality, the roles of $\Phi, \Psi$ may be swapped.
Thus
\begin{equation*}
\text{$\Psi^\op$-meets commute with $\Phi$-joins in $2$}
\iff \Psi \subseteq \Phi^*
\iff \Phi \subseteq \Psi^*
\quad \text{(as submonads of $\@L$)}.
\end{equation*}
\end{definition}

\begin{remark}
\label{rmk:comm-proper}
The above definition of $\Phi^*$, which follows \cite{KScolim}, yields \emph{a priori} a submonad of $\@L$.
But such a submonad automatically obeys the saturation condition \cref{rmk:phi-proper:proper} of \cref{rmk:phi-proper}, since given an order-embedding $i : X `-> X'$ and poset $Y$, a monotone $F : X^\op \times Y -> 2$ may be extended along $i$ to $F' : X'^\op \times Y -> 2$ (e.g., the left Kan extension $F'(x',y) := \bigvee_{x \in i^{-1}(\up x')} F(x,y)$), so that for $\psi \in \@L(X)$, the $\psi^\op$-meet of $F$ commutes with all $\Phi$-joins iff the $i_*(\psi)^\op$-meet of $F'$ does.
Thus by \cref{rmk:phi-proper}, we may equally well regard $\Phi^*$ as a class of posets.
Namely, for a poset $\psi$,
\begin{align*}
\psi \in \Phi^*
&\iff  \psi \in \Phi^*(\psi) = \@L(\psi)_\Phi \\
&\iff  \text{whenever $\psi$ is a $\Phi$-union of lower subsets, one of them is $\psi$}.
\end{align*}
Note moreover that this reasoning applies to $\Phi^*$ even if $\Phi$ is only a submonad of $\@L$ to begin with; this justifies our claim from \cref{rmk:phi-proper} that for our duality-theoretic purposes, it suffices to consider ``join doctrines'' which are classes of posets, rather than arbitrary submonads of $\@L$ as in \cite{KScolim}.
\end{remark}

\begin{remark}
\label{rmk:comm-r}
$\Phi$-joins commute with $\Psi^\op$-meets in $2$ iff they do in the unit interval $\#I$.
This follows from the facts that $2$ is a complete sublattice of $\#I$, while $\#I$ is a complete lattice homomorphic image via ${\bigvee} : \@L(\#I) ->> \#I$ (by complete distributivity, \cref{ex:cts-r}) of a complete sublattice $\@L(\#I) \subseteq 2^\#I$.
\end{remark}

\begin{remark}
\label{rmk:comm-idl}
If $\phi \in \Psi^*(X)$ for a $\Psi$-suplattice $X$, then by considering the indicator function of ${\le} \subseteq X^\op \times X$, we get that $\phi$ must be a $\Psi$-ideal.
(The converse is false in general: for $\Psi =$ directed posets, a $\Psi$-ideal is a Scott-closed subset, but only finite meets commute with directed joins.)
\end{remark}

\begin{proposition}[{\cite[8.9, 8.11, 8.13]{KScolim}}]
\label{thm:sound}
Let $\Phi, \Psi$ be two join doctrines such that $\Psi^\op$-meets commute with $\Phi$-joins in $2$.
The following are equivalent:
\begin{enumerate}[label=(\roman*)]
\item \label{thm:sound:alg}
For every poset $X$, $\@L(X)$ is generated under $\Phi$-joins by $\Psi(X) \subseteq \@L(X)_\Phi$.
\item \label{thm:sound:idl}
For every $\Psi$-suplattice $X$, $\Phi(X)$ consists precisely of all $\Psi$-ideals in $X$.
\item \label{thm:sound:idl-l}
For every poset $X$, there is a sub-$\Psi$-suplattice $\Psi'(X) \subseteq \@L(X)$ containing all principal ideals $\down x$ (e.g., $\Psi'(X) = \@L(X)$ or $\Psi'(X) = \Psi(X)$) such that $\Phi(\Psi'(X))$ contains all $\Psi$-ideals in $\Psi'(X)$.
\end{enumerate}
If these hold, then in fact $\Psi(X) = \@L(X)_\Phi = \Phi^*(X)$, whence $\@L(X) \cong \Phi(\Psi(X))$ is $\Phi$-algebraic, whence in particular $\Phi$ is a continuous join doctrine; and similarly $\Phi = \Psi^*$.
\end{proposition}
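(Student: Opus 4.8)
The plan is to prove the cycle (i)$\Rightarrow$(ii)$\Rightarrow$(iii)$\Rightarrow$(i) and then read off the remaining assertions. Throughout I abbreviate $\Psi_\chi := \{\psi \in \Psi(X) \mid \psi \subseteq \chi\}$ for $\chi \in \@L(X)$. Since every principal ideal $\down x$ lies in $\Psi(X)$ (a poset with a top element is cofinally hit by $\*1$, hence lies in every join doctrine) and $\chi = \bigcup_{x\in\chi}\down x$, we always have $\chi = \bigcup \Psi_\chi$; so (i) is precisely the statement that each $\Psi_\chi$ is a $\Phi$-join, i.e.\ $\Psi_\chi \in \Phi$.

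The backbone is an \emph{unconditional} identity: in a free $\Psi$-suplattice $\Psi(Y)$, every $\Psi$-ideal $J$ equals $\Psi_{\bigcup J}$. Indeed, for $\psi \in \Psi(Y)$ with $\psi \subseteq \bigcup J$ and each $y \in \psi$ one has $\down y \in J$, whence $\psi = \bigvee_{y\in\psi}\down y$ is a $\Psi$-join of elements of $J$ and so $\psi \in J$; the reverse inclusion is clear. Thus (i) for $Y$ is equivalent to ``every $\Psi$-ideal in $\Psi(Y)$ lies in $\Phi$'', which is (ii) restricted to free $\Psi$-suplattices. I would then upgrade this to arbitrary $\Psi$-suplattices $Z$ by a retraction argument: the adjunction $\bigvee \dashv \down$ pulls a $\Psi$-ideal $J \subseteq Z$ back to the $\Psi$-ideal $J^* := (\bigvee)^{-1}(J)$ in $\Psi(Z)$, and the unit inequality $1 \le \down \circ \bigvee$ exhibits $\down(J) \cong J$ as a cofinal subposet of $J^*$, so $J^* \in \Phi$ forces $J \in \Phi$ by \cref{def:phi:proper}. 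Together with the automatic inclusion $\Phi(Z) \subseteq \{\Psi\text{-ideals}\}$ (\cref{rmk:comm-idl}), this gives (i)$\Leftrightarrow$(ii).

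Next, (ii)$\Rightarrow$(iii) is immediate (take $\Psi'(X) = \@L(X)$, or $\Psi'(X)=\Psi(X)$). For (iii)$\Rightarrow$(i), given $\chi$ I form inside $\Psi'(X)$ the set $K := \{\eta \in \Psi'(X) \mid \eta \subseteq \psi \subseteq \chi \text{ for some } \psi \in \Psi(X)\}$; since $\Psi(X) \subseteq \Psi'(X)$ is closed under the (union-computed) $\Psi$-joins, $K$ is a $\Psi$-ideal in $\Psi'(X)$, hence $K \in \Phi$ by (iii). As $\Psi_\chi$ is a cofinal subposet of $K$, \cref{def:phi:proper} again yields $\Psi_\chi \in \Phi$, which is (i). Granting the equivalent conditions, the inclusion $\Psi(X) \subseteq \@L(X)_\Phi = \Phi^*(X)$ is the standing hypothesis, while conversely any $\Phi$-compact $\psi$ satisfies $\psi = \bigcup \Psi_\psi$ with $\Psi_\psi \in \Phi$, so compactness places $\psi$ inside some member of $\Psi_\psi$ and hence $\psi \in \Psi(X)$; thus $\Psi(X) = \Phi^*(X)$, $\@L(X)$ is $\Phi$-algebraic with $\@L(X) \cong \Phi(\Psi(X))$ (\cref{def:phi-way}), and $\Phi$ is continuous by \cref{thm:phi-cts}.

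The delicate clause is $\Phi = \Psi^*$, and I expect this to be the main obstacle: the three conditions are \emph{not} symmetric in $\Phi, \Psi$ (e.g.\ for $\Phi$ directed and $\Psi$ finite, the analogue of (i) with roles swapped fails), so equality cannot be obtained by mere symmetry and instead forces the genuinely \emph{two-variable} commutation of \cref{def:comm}, rather than one-variable $\Phi$-/$\Psi$-compactness, into play. Given a $\Psi$-compact $\phi \in \@L(X)_\Psi = \Psi^*(X)$, $\Phi$-algebraicity gives $\Psi_\phi \in \Phi$, and it suffices to show that $\down \colon \phi \hookrightarrow \Psi_\phi$ has cofinal image, i.e.\ that every $\theta \in \Psi(X)$ with $\theta \subseteq \phi$ is bounded above by a single point of $\phi$; for then $\phi$ is isomorphic to a cofinal subposet of $\Psi_\phi \in \Phi$, so $\phi \in \Phi(X)$. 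I would extract this boundedness from the commutation encoded by $\phi \in \Psi^*(X)$: with join-index $\theta \in \Psi(\theta)$ and the monotone $F \colon X^\op \times \theta \to 2$ given by the indicator of $\{(x,b) \mid x \not\ge b\}$, the side $\bigvee_{b\in\theta}\bigwedge_{x\in\phi} F(x,b)$ vanishes (each $b \in \theta \subseteq \phi$ satisfies $b \ge b$), hence so does $\bigwedge_{x\in\phi}\bigvee_{b\in\theta} F(x,b)$, which says exactly that some $x \in \phi$ dominates all of $\theta$. Combined with $\Phi \subseteq \Psi^*$ this gives $\Phi(X) = \Psi^*(X)$. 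The steps most in need of care are the cofinality/saturation manipulations governed by \cref{def:phi:proper} and, above all, this final commutation computation, which is the only place where the two-variable form of \cref{def:comm} is indispensable.
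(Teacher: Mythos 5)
Most of your proposal is sound, and parts of it are genuinely nicer than the paper's route: the ``backbone'' identity for $\Psi$-ideals in free $\Psi$-suplattices, the retraction argument along $\bigvee \dashv \down$ (the paper instead proves (i)$\implies$(ii) by showing that the $\Psi$-ideal generated by each lower set lies in $\Phi(X)$, by induction along the $\Phi$-generation), and the direct two-variable commutation computation giving $\Psi^*(X) \subseteq \Phi(X)$, which is cleaner than the paper's derivation of $\Phi = \Psi^*$ from (ii), \cref{rmk:comm-idl}, and the saturation condition of \cref{rmk:phi-proper}. One caveat: your opening claim that (i) ``is precisely'' the statement that each $\Psi_\chi \in \Phi$ is not definitional; the forward implication needs $\Phi$-compactness of the members of $\Psi(X)$ (the standing hypothesis) together with \cref{def:phi:mult} and \cref{def:phi:funct}, so it deserves its three lines of proof.

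The genuine gap is in (iii)$\implies$(i). Your set $K = \{\eta \in \Psi'(X) \mid \eta \subseteq \psi \subseteq \chi \text{ for some } \psi \in \Psi(X)\}$ need not be a $\Psi$-ideal in $\Psi'(X)$, and the reason you give (closure of $\Psi(X) \subseteq \Psi'(X)$ under union-computed $\Psi$-joins) does not yield it: for a $\Psi$-family $\{\eta_i\} \subseteq K$ the witnessing bounds $\psi_i$ cannot be chosen monotonically in $\eta_i$, so $\{\psi_i\}$ need not be a $\Psi$-family, and $\bigcup_i \eta_i$ may admit no bound in $\Psi(X)$ inside $\chi$ at all. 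Concretely, take $\Psi =$ directed posets, $\Phi =$ posets with a greatest element (so $\Psi^\op$-meets trivially commute with $\Phi$-joins in $2$), $\Psi'(X) = \@L(X)$, and let $X$ be an infinite antichain $\{a_n\}_{n \in \omega}$ together with pairwise incomparable elements $b_S$ for finite $S \subseteq \omega$, where $a_n < b_S$ iff $n \in S$; take $\chi := X$. Each $\eta_n := \down\{a_0, \dotsc, a_n\}$ lies in $K$, being bounded by the ideal $\down b_{\{0,\dotsc,n\}}$, but the union of this chain is the antichain $\{a_n\}_n$, which is contained in no ideal of $X$; so $K$ is not closed under directed joins. (In this example (iii) fails too, so the theorem is not contradicted; but your deduction of ``$K$ is a $\Psi$-ideal'' invokes only facts that hold in this example, so it is invalid as an inference, and since (iii) is existential in $\Psi'$ your argument must in particular handle the witness $\Psi'(X) = \@L(X)$; I do not see how to recover the claim from (iii) short of the full theorem.) Fortunately, your own retraction argument repairs the cycle: given a $\Psi$-suplattice $Z$, apply (iii) to the underlying poset of $Z$ and pull a $\Psi$-ideal $J \subseteq Z$ back along $\bigvee : \Psi'(Z) \to Z$; the preimage is a $\Psi$-ideal of $\Psi'(Z)$ (images of $\Psi$-families under $\bigvee$ are $\Psi$-indexed by \cref{def:phi:funct}), hence lies in $\Phi$ by (iii), and $\down(J) \cong J$ is cofinal in it, so $J \in \Phi(Z)$ by \cref{def:phi:proper}. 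This gives (iii)$\implies$(ii) directly; then (ii) applied to the free $\Psi$-suplattice $\Psi(X)$ gives $\Psi_\chi \in \Phi(\Psi(X))$, i.e.\ (i), and $K$ is never needed.
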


If these hold, we call $\Phi$ a \defn{sound join doctrine, dual to the sound meet doctrine $\Psi^\op$}.
Thus, $\Phi$ is a sound join doctrine iff $\@L(X) \cong \Phi(\Phi^*(X))$, iff $\Phi(X)$ contains every $\Phi^*$-ideal in a $\Phi^*$-suplattice $X$.
(Warning: this notion is \emph{not} preserved under swapping $\Phi, \Psi$, in contrast to \cref{def:comm}.)

\begin{proof}
\cref{thm:sound:idl}$\implies$\cref{thm:sound:idl-l} is obvious.

\cref{thm:sound:idl-l}$\implies$\cref{thm:sound:alg}:
For any $\theta \in \@L(X)$, clearly $\Psi'(X) \cap \down \theta = \{\psi \in \Psi'(X) \mid \psi \subseteq \theta\}$ is a $\Psi$-ideal in $\Psi'(X)$, thus by \cref{thm:sound:idl-l} is in $\Phi(\Psi'(X))$; and its union is $\theta$, which is thus a $\Phi$-join of elements of $\Psi(X)$.

\cref{thm:sound:alg}$\implies$\cref{thm:sound:idl}:
For every $\theta \in \@L(X)$, the $\Psi$-ideal $\ang{\theta}$ it generates is in $\Phi(X)$: this is true for $\theta \in \Psi(X)$ since $\ang{\theta} = \down \bigvee \theta$, and is true for a $\Phi$-join $\theta = \bigcup_i \theta_i$ if it is true for each $\theta_i$ since $\ang{\theta} = \bigcup_i \ang{\theta_i}$ (using that $\Psi^\op$-meets commute with $\Phi$-joins in $2$), thus is true for all $\theta \in \@L(X)$ by \cref{thm:sound:alg}.
Conversely, as noted above, every $\phi \in \Phi(X)$ is a $\Psi$-ideal.

The last sentence follows from \cref{thm:sound:alg}, \cref{thm:sound:idl}, and \cref{rmk:comm-idl}, which imply that $\Phi(X) = \Psi^*(X)$ for a $\Psi$-suplattice $X$, hence for every poset $X$ by applying \cref{rmk:phi-proper:proper} in \cref{rmk:phi-proper} to $\down : X -> \Psi(X)$.
\end{proof}

\begin{lemma}
\label{rmk:sound-omega}
For any join doctrine $\Phi$, we have $\omega \in \Phi$ iff $\omega \not\in \Phi^*$.
\end{lemma}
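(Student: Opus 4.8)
The plan is to collapse both sides of the biconditional to a statement about the single poset $\@L(\omega)$, exploiting the reformulation of membership in $\Phi^*$ given in \cref{rmk:comm-proper}: for a poset $\psi$ we have $\psi \in \Phi^*$ iff $\psi \in \Phi^*(\psi) = \@L(\psi)_\Phi$, i.e.\ iff the top element $\psi$ of $\@L(\psi)$ is $\Phi$-compact. Taking $\psi = \omega$, I would therefore set out to decide exactly when the full lower set $\omega \in \@L(\omega)$ satisfies $\omega << \omega$, and show this occurs precisely when $\omega \notin \Phi$.

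First I would unwind the structure of $\@L(\omega)$. Since the lower subsets of $\omega$ are exactly $\emptyset$, the finite initial segments $[0,n] = \{0,\dotsc,n\}$, and $\omega$ itself, $\@L(\omega)$ is the chain $\emptyset \subset \{0\} \subset \{0,1\} \subset \dotsb \subset \omega$, order-isomorphic to $\omega + 1$. In particular its poset of proper lower subsets $C := \@L(\omega) \setminus \{\omega\}$ is a chain isomorphic to $\omega$. Next I would spell out $\Phi$-compactness of $\omega$ via \cref{def:phi-way}: the relation $\omega << \omega$ fails iff there is a lower set $\Theta \in \Phi(\@L(\omega))$ with $\bigcup \Theta = \omega$ but $\omega \notin \Theta$. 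As $\@L(\omega)$ is a chain, every such $\Theta$ is an initial segment; the initial segments whose union is $\omega$ are exactly $C$ and $\@L(\omega)$, and of these only $C$ omits the top element $\omega$. Hence the unique possible witness is $\Theta = C$, and it is a genuine witness iff $C \in \Phi(\@L(\omega))$, which for a lower set means iff $C \in \Phi$ as a poset.

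Finally, since $\Phi$ is closed under isomorphism (an order-isomorphism is a monotone map with cofinal image, so this is a special case of the saturation condition \cref{def:phi:funct}) and $C \cong \omega$, I obtain $C \in \Phi \iff \omega \in \Phi$. Combining the steps: $\omega \in \Phi^*$ iff $\omega$ is $\Phi$-compact in $\@L(\omega)$ iff the witness $C$ fails to lie in $\Phi$ iff $\omega \notin \Phi$, which is exactly the asserted equivalence $\omega \in \Phi \iff \omega \notin \Phi^*$.

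I do not expect a serious obstacle here; the argument is essentially a direct computation in $\@L(\omega)$. The two points demanding care are matching the definition of $\Phi$-compactness correctly — the covering families range over \emph{lower} sets of $\@L(\omega)$, which is what forces the witness to be the single maximal proper initial segment $C$ rather than some arbitrary cofinal subfamily of it — and invoking isomorphism-invariance of $\Phi$ to transport between $C$ and $\omega$.
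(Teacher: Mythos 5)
Your proof is correct, and its core computation --- analyzing $\Phi$-compactness of the top element of $\@L(\omega)$, and observing that any witnessing $\Phi$-union of proper lower subsets of $\omega$ is forced to be the whole chain $C = \@L(\omega) \setminus \{\omega\}$ of finite initial segments, which has order type $\omega$ --- is exactly the paper's argument for the harder direction $\omega \notin \Phi^* \implies \omega \in \Phi$. Where you genuinely differ is the other direction. The paper proves $\omega \in \Phi \implies \omega \notin \Phi^*$ by a separate, semantic argument: $\omega$-joins do not commute with $\omega^\op$-meets in $2$ (the usual diagonal counterexample $F(m,n) = 1$ iff $m \le n$), so $\omega \notin \Phi \cap \Phi^*$ follows directly from the commuting-meets definition of $\Phi^*$, with no need to identify the witness. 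You instead notice that your $\@L(\omega)$ analysis is a chain of biconditionals, so the same computation runs backwards: if $\omega \in \Phi$, then $C \cong \omega$ lies in $\Phi$ by isomorphism-invariance (condition \cref{def:phi:funct} of \cref{def:phi}), hence $C \in \Phi(\@L(\omega))$ destroys $\Phi$-compactness of $\omega$. Both are sound; your route is more uniform and purely combinatorial, handling both implications with one argument and using only the reformulation in \cref{rmk:comm-proper}, while the paper's forward direction is immediate from the original definition of $\Phi^*$ and requires no uniqueness-of-witness reasoning. Note also that, like the paper's proof (credited to the referee), yours correctly avoids any soundness assumption on $\Phi$, which is essential since the lemma is applied to arbitrary join doctrines.
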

\begin{proof}
$\omega \not\in \Phi \cap \Phi^*$ since $\omega$-joins do not commute with $\omega^\op$-meets in $2$.
If $\omega \not\in \Phi^*$, i.e., $\omega \in \@L(\omega)$ is not $\Phi$-compact, then $\omega$ is a $\Phi$-union of proper lower subsets of $\omega$; the order-type of this union must clearly be $\omega$.
(This argument is due to the referee; my original proof assumed soundness of $\Phi$.)
\end{proof}

\begin{corollary}[generalized Hofmann--Mislove--Stralka duality]
\label{thm:hms}
Let $\Phi$ be a sound join doctrine, dual to the meet doctrine $\Psi^\op = \Phi^{*\op}$.
We have a dual equivalence of categories
\begin{equation*}
\begin{tikzcd}[row sep=0pt, column sep=6em]
\Phi\!{AlgLat}^\op \rar[shift left, "{\Phi\!{AlgLat}(-, 2)}"] &
\Psi^\op\!{Inf}. \lar[shift left, "{\Psi^\op\!{Inf}(-, 2)}"]
\end{tikzcd}
\end{equation*}
We may replace $\Phi\!{AlgLat}$ with $\Phi\!{CtsLat}$ iff $\omega \not\in \Phi$, i.e., $\omega \in \Psi$.
\end{corollary}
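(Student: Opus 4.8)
The plan is to treat $2$ as a schizophrenic object carrying simultaneously a $\Phi$-continuous lattice structure and a $\Psi^\op$-inflattice structure, which commute by hypothesis (\cref{def:comm}), and to reduce the asserted dual equivalence to two explicit computations of the hom-posets $\Phi\!{AlgLat}(X,2)$ and $\Psi^\op\!{Inf}(A,2)$. First I would set up the two hom-functors $D := \Phi\!{AlgLat}(-,2)$ and $E := \Psi^\op\!{Inf}(-,2)$. Precomposition automatically preserves pointwise meets, so the only thing to check for well-definedness is that $\Phi\!{AlgLat}(X,2) \subseteq 2^X$ is closed under pointwise $\Psi^\op$-meets and that $\Psi^\op\!{Inf}(A,2) \subseteq 2^A$ is closed under pointwise $\Phi$-joins; both are immediate from the commutation of $\Psi^\op$-meets with $\Phi$-joins in $2$. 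The harder assertions, that $DX$ is a $\Psi^\op$-inflattice and $EA$ a $\Phi$-algebraic lattice, will fall out of the computations below.

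For the $D$ side I would show $\Phi\!{CtsLat}(X,2) \cong X_\Phi^\op$ for every $\Phi$-continuous lattice $X$. Indeed, a morphism $f : X -> 2$ has $f^{-1}(1)$ upward closed and (since $f$ preserves arbitrary meets and $X$ is complete) closed under arbitrary meets, hence equal to $\up m$ for $m := \bigwedge f^{-1}(1)$; thus $f = \chi_{\up m}$, and $f$ preserves $\Phi$-joins exactly when $m$ is $\Phi$-compact, i.e.\ $m \in X_\Phi$. The assignment $f \mapsto m$ is an order-reversing bijection onto $X_\Phi$, giving $\Phi\!{CtsLat}(X,2) \cong X_\Phi^\op$, which is a $\Psi^\op$-inflattice by the first step (equivalently, $\Psi$-joins of $\Phi$-compacts are $\Phi$-compact).

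The crux is the $E$ side: I would identify $\Psi^\op\!{Inf}(A,2)$ with $\Phi(A^\op)$. A monotone $g : A -> 2$ preserves $\Psi^\op$-meets iff $g^{-1}(1)$ is a $\Psi^\op$-filter of $A$, and every $\Psi^\op$-filter $F$ arises as $g = \chi_F$; so $\Psi^\op\!{Inf}(A,2)$, ordered by inclusion of $g^{-1}(1)$, is the lattice of $\Psi^\op$-filters of $A$. Now a $\Psi^\op$-filter of $A$ is precisely a $\Psi$-ideal of the $\Psi$-suplattice $A^\op$, so \emph{soundness} of $\Phi$ (\cref{thm:sound}) gives $\Psi^\op\!{Inf}(A,2) \cong \Phi(A^\op)$, which is $\Phi$-algebraic (\cref{def:phi-way}) with poset of compacts $A^\op$. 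This identification, where soundness is the essential and only non-formal input, is the main obstacle; everything else is bookkeeping. Feeding the two computations into each other yields $EDX \cong \Phi(X_\Phi) \cong X$ (the last isomorphism because $X$ is $\Phi$-algebraic) and $DEA \cong (\Phi(A^\op))_\Phi^\op \cong A$. Checking that these isomorphisms are the evaluation maps $x \mapsto (f \mapsto f(x))$ and $a \mapsto (g \mapsto g(a))$, whose naturality is then purely formal, establishes the dual equivalence.

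Finally, for the passage to $\Phi\!{CtsLat}$: by \cref{rmk:sound-omega} the conditions $\omega \notin \Phi$ and $\omega \in \Psi$ coincide. If they hold, \cref{thm:cts-alg} gives $\Phi\!{CtsLat} = \Phi\!{AlgLat}$, so the equivalence just proven applies verbatim. If instead $\omega \in \Phi$, then $\#I$ is $\Phi$-continuous (\cref{ex:cts-r}) but not $\Phi$-algebraic (\cref{thm:cts-alg}); since the $D$-side computation applies to any $\Phi$-continuous $X$, the would-be double dual $E(\Phi\!{CtsLat}(X,2))$ is always $\Phi$-algebraic, so the evaluation unit cannot be an isomorphism at $X = \#I$, and no dual equivalence with $\Phi\!{CtsLat}$ in place of $\Phi\!{AlgLat}$ can exist.
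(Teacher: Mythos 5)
Your proposal is correct and follows essentially the same route as the paper's proof: both compute $\Phi\!{AlgLat}(X,2) \cong X_\Phi^\op$ and, via soundness (\cref{thm:sound}), $\Psi^\op\!{Inf}(A,2) \cong \Phi(A^\op)$, then identify the evaluation maps with the canonical isomorphisms $X \cong \Phi(X_\Phi)$ and $A^\op \cong \Phi(A^\op)_\Phi$, and invoke \cref{thm:cts-alg} for the final clause. Your explicit argument that the unit fails at $\#I$ when $\omega \in \Phi$ merely fleshes out what the paper leaves implicit in its appeal to \cref{thm:cts-alg}.
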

\begin{proof}
For a $\Phi$-algebraic lattice $X$, a morphism $X -> 2$ is the indicator function of $\up x$ for $\Phi$-algebraic $x$.
For a $\Psi^\op$-inflattice $A$, a morphism $A -> 2$ is the indicator function of a $\Psi^\op$-filter.
So we have
\begin{align*}
\Phi\!{AlgLat}(X, 2) &\cong X_\Phi^\op, &
\Psi^\op\!{Inf}(A, 2) &\cong \Phi(A^\op).
\end{align*}
Now the adjunction (co)unit on the left is given by, for $X \in \Phi\!{AlgLat}$, the evaluation map
\begin{align*}
X &--> \Psi^\op\!{Inf}(\Phi\!{AlgLat}(X, 2), 2) \\
x &|--> (f |-> f(x)),
\end{align*}
which via the above isomorphisms becomes the canonical isomorphism
$X \cong \Phi(X_\Phi)$
characterizing algebraicity.
Similarly, for $A \in \Psi^\op\!{Inf}$, the unit $A -> \Phi\!{AlgLat}(\Psi^\op\!{Inf}(A, 2), 2)$ is the canonical isomorphism
$
A^\op \cong \Phi(A^\op)_\Phi.
$
By \cref{thm:cts-alg}, $\Phi\!{AlgLat} = \Phi\!{CtsLat}$ iff $\#I$ is $\Phi$-algebraic, iff $\omega \not\in \Phi$.
\end{proof}

\begin{example}
$\Phi =$ directed posets forms a sound join doctrine, dual to $\Psi^\op =$ ``finite meets'', i.e., $\Psi =$ the class of posets with finite cofinality.
In this case, \cref{thm:hms} becomes the classical Hofmann--Mislove--Stralka duality \cite{HMSlat} between (unital) meet-semilattices and algebraic lattices.

Similarly, the join doctrine $\Phi$ of $\kappa$-directed posets for an uncountable regular cardinal $\kappa$ is sound, dual to $\kappa$-ary meets.
But since $\omega \not\in \Phi$ for uncountable $\kappa$, we get a duality between $\kappa$-meet-semilattices and $\kappa$-\emph{continuous} lattices.
\end{example}

We now show that there are very few sound join doctrines $\Phi \ni \omega$, for which $\Phi\!{AlgLat} \ne \Phi\!{CtsLat}$: essentially, they are only the classical cases of continuous and completely distributive lattices (\cref{ex:cts-dir,ex:cts-cd}), plus the minor variations including/excluding empty joins.

\begin{theorem}
\label{thm:sound-omega}
There are precisely 4 sound join doctrines $\Phi \ni \omega$, dual to $\Psi^\op$:
\begin{enumerate}[label=(\roman*)]
\item  $\Phi =$ directed posets, $\Psi =$ posets with finite cofinality;
\item  $\Phi =$ empty or directed posets, $\Psi =$ nonempty posets with finite cofinality;
\item  $\Phi =$ nonempty posets, $\Psi =$ posets which are empty or have greatest element;
\item  $\Phi =$ all posets, $\Psi =$ posets with greatest element.
\end{enumerate}
\end{theorem}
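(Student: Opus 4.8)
The plan is to pin down the dual pair, with $\Psi := \Phi^*$, by reducing everything to the single assertion that $\Phi$ contains all directed posets, and then reading off the four cases from the behaviour of $\Phi^*$ on finite antichains and on the empty poset. Throughout I use that soundness gives $\Psi = \Phi^*$ and $\Phi = \Psi^*$ (\cref{thm:sound}), together with two elementary facts about \emph{any} join doctrine. First, a poset with a greatest element lies both in $\Phi$ and in $\Phi^*$ (it is a cofinal image of $\*1$, and in any covering the piece containing the top is everything), and conversely if $\phi \in \Phi \cap \Phi^*$ then the tautological family $\{\down x\}_{x \in \phi}$ exhibits $\phi$ as a $\phi$-indexed join, so $\Phi$-compactness forces some $\down x = \phi$; hence $\Phi \cap \Phi^* = \{\text{posets with greatest element}\}$. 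Second, the empty-join bookkeeping: by \cref{thm:r-way}\cref{thm:r-way:0}, $\emptyset \in \Phi \iff \bot$ is not $\Phi$-compact $\iff \emptyset \notin \Phi^* = \Psi$.

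First I would establish the reduction. For a poset $\psi$ of \emph{infinite} cofinality the directed family $\mathcal F(\psi) := \{\down F \mid F \subseteq \psi\ \text{finite}\}$, ordered by $\subseteq$, consists of \emph{proper} lower subsets with union $\psi$. Hence, \emph{if} $\Phi$ contains all directed posets, then $\mathcal F(\psi) \in \Phi$ realizes $\psi$ as a proper $\Phi$-join, so $\psi \notin \Phi^*$; that is, $\Phi \supseteq \{\text{directed}\} \Rightarrow \Phi^* \subseteq \{\text{finite cofinality}\}$ (and these are equivalent, as $\{\text{finite cofinality}\}^* = \{\text{directed}\}$). So the whole theorem reduces to showing a sound $\Phi \ni \omega$ contains every directed poset. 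As a first instance of the leverage of $\omega \in \Phi$, note the $\omega$-antichain $\notin \Psi$ unconditionally, being the proper $\omega$-union $\bigcup_n \{0,\dots,n\}$ with index $\omega \in \Phi$.

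The crux, and the step I expect to be the main obstacle, is therefore: \emph{a sound join doctrine with $\omega \in \Phi$ contains all directed posets}. I would argue by transfinite induction on cardinality, the content living at regular cardinals. The base case is clear: a countable directed poset has a cofinal $\omega$-chain (or is finite), so lies in $\Phi$ by \cref{def:phi:proper} and $\omega \in \Phi$. For a directed $D$ of size $\kappa$: if $\mathrm{cf}(D) < \kappa$, a cofinal directed subposet of size $\mathrm{cf}(D)$ is in $\Phi$ by induction, whence $D \in \Phi$ by \cref{def:phi:proper}; and if $\mathrm{cf}(D) = \kappa$ is regular, write $D = \bigcup_{\alpha<\kappa} D_\alpha$ as an increasing $\kappa$-chain of directed subsets of size $<\kappa$ (each in $\Phi$ by induction), so \cref{def:phi:mult} gives $D \in \Phi$ \emph{provided the index $\kappa$-chain lies in $\Phi$}. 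Thus the difficulty collapses to showing $\kappa\text{-chain} \in \Phi$ for every regular uncountable $\kappa$. The one free tool is that the proof of \cref{rmk:sound-omega} works verbatim for any regular $\kappa$ (since $\kappa$-joins fail to commute with $\kappa^\op$-meets in $2$, while a non-$\Phi$-compact chain $\kappa$ is a cofinal image of its covering index): $\kappa\text{-chain} \in \Phi \iff \kappa\text{-chain} \notin \Phi^* = \Psi$. So it suffices to rule out $\kappa\text{-chain} \in \Psi$. The countable analogue is free—applying \cref{thm:r-way} to the doctrine $\Psi$ (which has $\omega \notin \Psi$) shows every member of $\Psi$ has cofinality $\ne \omega$—but the uncountable regular case is genuinely hard, precisely because mere saturation cannot relate $\kappa$ to $\omega$ (there is neither a cofinal monotone map $\kappa \to \omega$ nor an $\omega$-chain cofinal in $\kappa$). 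This is where full soundness—the $\Phi$-algebraicity of every $\@L(X)$ (\cref{thm:sound:alg}), equivalently the identification of $\Phi(Y)$ with the $\Psi$-ideals of any $\Psi$-suplattice $Y$ (\cref{thm:sound:idl})—must be brought to bear, to derive from a hypothetical $\kappa\text{-chain} \in \Psi$ a contradiction with \cref{thm:sound:idl} (some $\Psi$-ideal failing to be closed under a $\Phi$-join of an $\omega$-chain). Arranging the regularity combinatorics of this witness so that it interacts with $\omega \in \Phi$ is the part I expect to require the most care.

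Granting the crux, so that $\Psi = \Phi^* \subseteq \{\text{finite cofinality}\}$, the classification finishes cleanly. A finite poset is a cofinal image of, and receives a cofinal inclusion from, its finite antichain of maximal elements, so by \cref{def:phi:funct,def:phi:proper} the finite members of $\Psi$ are governed by which finite antichains lie in $\Psi$. If the $2$-element antichain is in $\Psi$, then so is every finite antichain, by induction: the $(n{+}1)$-antichain is the union of two overlapping $n$-antichains indexed by a $2$-antichain, so \cref{def:phi:mult} applies; hence $\Psi = \{\text{finite cofinality}\}$ and $\Phi = \Psi^* = \{\text{directed}\}$. If the $2$-antichain is not in $\Psi$, then no antichain of size $\ge 2$ is (an $n$-antichain maps onto a $2$-antichain with cofinal image, \cref{def:phi:funct}), so every member of $\Psi$ has at most one maximal element, i.e.\ $\Psi = \{\text{posets with greatest element, possibly }\emptyset\}$ and $\Phi = \Psi^* = \{\text{all posets}\}$. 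In each branch the empty poset may independently be in or out of $\Psi$, giving by the bookkeeping above exactly the sub-cases $\emptyset \in \Phi$ versus $\emptyset \notin \Phi$, hence precisely the four listed pairs. Finally I would check that all four are genuinely sound with $\omega \in \Phi$: directed$/$finite-cofinality is the classical ideal-completion situation underlying \cref{thm:hms}, all-posets$/$trivial is complete distributivity (\cref{ex:cts-cd}), and the two empty-join variants are verified by the same computation $\@L(X) \cong \Phi(\Phi^*(X))$; that $\omega \in \Phi$ in each is immediate.
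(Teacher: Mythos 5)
Your proposal assembles the periphery of the argument correctly, and in fact matches the paper's proof in those places: the bookkeeping $\emptyset \in \Phi \iff \emptyset \notin \Psi$, the identification $\Phi \cap \Phi^* = \{\text{posets with a greatest element}\}$, the equivalence ``$\Phi \supseteq \text{directed} \iff \Psi \subseteq \text{finite cofinality}$'', and the finite-antichain case analysis at the end are all sound and are essentially what the paper does. But at what you yourself identify as the crux --- that a sound join doctrine with $\omega \in \Phi$ contains every directed poset --- you only perform a reduction (via transfinite induction, to ``the $\kappa$-chain lies in $\Phi$, equivalently not in $\Psi$, for every regular uncountable $\kappa$'') and then explicitly defer the actual argument, saying soundness ``must be brought to bear'' and that this is ``the part I expect to require the most care.'' This is a genuine gap, not a deferred routine detail: as you correctly diagnose, the sequential consequence of $\omega \notin \Psi$ (\cref{thm:r-way}\cref{thm:r-way:nomega:seq}) is powerless against a $\kappa$-chain for regular uncountable $\kappa$, since every countable subfamily of such a chain is bounded in it. So the one tool you have set up cannot finish your own reduction, and the transfinite induction scaffolding does nothing to fix that.

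The paper closes exactly this gap with a single construction absent from your proposal, and it makes the induction unnecessary. Since $\omega \in \Phi$ forces $\omega \notin \Psi$ (\cref{rmk:sound-omega}), \cref{thm:r-way}\cref{thm:r-way:nomega:seq} applies to $\Psi$: any $\psi \in \Psi(\@P_\omega(X))$, for an arbitrary set $X$, has no strictly increasing sequence and no infinite family of maximal elements --- and inside the \emph{finite} powerset $\@P_\omega(X)$, whose elements are finite sets, this forces $\psi$ itself to be finite. Hence $\@P_\omega(X)$ is a $\Psi$-ideal in the complete lattice $\@P(X)$, so by soundness in the form of \cref{thm:sound}\cref{thm:sound:idl} it lies in $\Phi$. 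Then every join-semilattice $Y$ lies in $\Phi$, being the image of the monotone surjection $\bigvee : \@P_\omega(Y) \to Y$ (condition \cref{def:phi:funct} of \cref{def:phi}), and every directed poset lies in $\Phi$, being cofinal in the join-semilattice it freely generates (condition \cref{def:phi:proper}). Note that this uses soundness \emph{positively}, to place posets into $\Phi$, rather than hunting for a contradiction from a hypothetical $\kappa \in \Psi$; the contradiction you wanted then falls out for free, since $\kappa \in \Phi \cap \Psi$ would force $\kappa$ to have a greatest element by your own first lemma. Without this (or an equivalent) construction your proof is incomplete at its central step; with it, the case-by-cardinality analysis can be deleted entirely.
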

\begin{proof}
It is well-known and easily seen that each of these 4 cases is sound; we show the converse.

First, we show that $\Phi$ must contain every directed poset, i.e., every poset in $\Psi$ must have finite cofinality.
For every set $X$, $\Phi$ contains the finite powerset $\@P_\omega(X)$, since this is a $\Psi$-ideal in the full powerset $\@P(X)$, since by \cref{thm:r-way}\cref{thm:r-way:nomega:seq} (applied to $\Psi \not\ni \omega$), every $\psi \in \Psi(\@P_\omega(X))$ can have neither a strictly increasing sequence nor infinitely many maximal elements, thus must be finite.
Now for every join-semilattice $X$, we have a monotone surjection $\bigvee : \@P_\omega(X) ->> X$, whence $X \in \Phi$.
Since every directed poset $\phi$ is cofinal in the free join-semilattice it generates, it follows that $\phi \in \Phi$.

So $\Psi$ is determined by the finite antichains $n$ in it.
If some $n > 1$ is in $\Psi$, then by induction so is each $n^k \cong \bigsqcup_{i \in n} n^{k-1}$; now every $m \ge 1$ admits a surjection $n^k ->> m$, whence $m \in \Psi$.
\end{proof}

\section{$\#U$-posets}
\label{sec:upos}

Henceforth, we assume $\Phi \ni \omega$ is a sound join doctrine, dual to $\Psi^\op$, so one of the cases in \cref{thm:sound-omega}.
Then Hofmann--Mislove--Stralka duality does not apply to all $\Phi$-continuous lattices, and so we would like to formulate a duality based on morphisms to $\#I$ instead of $2$.

By \cref{rmk:comm-r}, the dual algebra $\Phi\!{CtsLat}(X, \#I)$ will still be equipped with $\Psi^\op$-meets.
But these are not all the operations on $\#I$ commuting with the $\Phi$-continuous lattice operations: clearly any complete lattice homomorphism $\#I -> \#I$ does as well.
We thus introduce the following notions:

\begin{definition}
\label{def:upos}
Let $\#U := \!{CLat}(\#I, \#I)$ denote the partially ordered monoid of all complete lattice homomorphisms $\#I -> \#I$, i.e., surjective monotone maps.

A \defn{$\#U$-poset} is a poset equipped with a monotone (in both variables) action of the monoid $\#U$.
Denote the category of these (and equivariant monotone maps) by $\#U\!{Pos}$.

A \defn{$\#U$-$\Psi^\op$-inflattice} is a $\#U$-poset which is also a $\Psi^\op$-inflattice such that the action of each $u \in \#U$ preserves $\Psi^\op$-meets.
Denote the category of these by $\#U\Psi^\op\!{Inf}$.
\end{definition}

\begin{definition}
\label{def:upos-met}
Let $\dotplus, \dotminus$ denote truncated $+, -$ on $\#I$;
note that they obey the adjunction
\begin{equation}
\label{eq:dotpm-adj}
r \dotminus s \le t \iff r \le s \dotplus t.
\end{equation}
For a $\#U$-poset $A$ and $a, b \in A$, define
\begin{gather*}
\begin{aligned}
a \le_r b  \coloniff{}&  \forall u, v \in \#U\, (u((-) \dotplus r) \le v \implies u(a) \le v(b)),
\end{aligned}
\\
\begin{aligned}
\rho(a, b) &:= \bigwedge \{r \in \#I \mid a \le_r b\}, \\
\dist(a, b) &:= \rho(a, b) \vee \rho(b, a).
\end{aligned}
\end{gather*}
\end{definition}

\begin{remark}
\label{rmk:upos-met}
In the definition of $\le_r$, instead of testing $\forall u, v$, it is enough to test any particular $u \in \#U$ which restricts to an order-isomorphism $u : [r,1] \cong [0,1]$ (e.g., the linear such isomorphism extended by $0$ on $[0,r]$), so that $v := u((-) \dotplus r) \in \#U$.
Indeed, for any other $u', v' \in \#U$ with $u'((-) \dotplus r) \le v'$, there is $w \in \#U$ with $u' = w \circ u$, whence $u'(a) = w(u(a)) \le w(v(a)) \le v'(a)$.
\end{remark}

\begin{remark}
There is an evident order-duality for $\#U$-posets $A$: let $u \in \#U$ act on the order-dual $A^\op$ via $1-u(1-(-))$; this reverses each $\le_r$, and turns $\rho$ into $\rho^\op(a, b) := \rho(b, a)$.
\end{remark}

Intuitively, $a \le_r b$ means ``$a \le b \dotplus r$''.
The following properties justify this interpretation:

\begin{proposition}
\label{thm:upos-met-r}
In $\#I$, we have $a \le_r b \iff a \le b \dotplus r$, whence $\rho(a, b) = a \dotminus b$ and $\dist(a, b) = \abs{a-b}$.
\end{proposition}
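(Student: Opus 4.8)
The plan is to reduce everything to the single biconditional $a \le_r b \iff a \le b \dotplus r$, from which both formulas for $\rho$ and $\dist$ fall out by pure adjunction bookkeeping. The easy implication ``$\Leftarrow$'' goes as follows: assuming $a \le b \dotplus r$, take any $u, v \in \#U$ with $u((-) \dotplus r) \le v$; then monotonicity of $u$ gives $u(a) \le u(b \dotplus r)$, while evaluating the hypothesis $u((-) \dotplus r) \le v$ at the single point $b$ gives $u(b \dotplus r) \le v(b)$, so $u(a) \le v(b)$, which is exactly what $a \le_r b$ demands.

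For the harder implication ``$\Rightarrow$'', I would invoke \cref{rmk:upos-met} to collapse the universal quantifier over $u, v$ to a single test: pick $u \in \#U$ restricting to an order-isomorphism $u : [r,1] \cong [0,1]$ and vanishing on $[0,r]$, and set $v := u((-) \dotplus r)$, which \cref{rmk:upos-met} guarantees lies in $\#U$. Then $a \le_r b$ yields $u(a) \le v(b) = u(b \dotplus r)$. The key observation is that $b \dotplus r \ge r$ always, so $b \dotplus r$ lies in $[r,1]$ where $u$ is order-reflecting. A two-case split then finishes: if $a \le r$ then $a \le r \le b \dotplus r$ directly; if $a > r$ then both $a$ and $b \dotplus r$ lie in $[r,1]$, so $u(a) \le u(b \dotplus r)$ reflects back to $a \le b \dotplus r$. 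The truncation boundary needs no separate handling, since whenever $b + r \ge 1$ we have $b \dotplus r = 1 \ge a$ with nothing to prove.

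With the biconditional in hand, the formula for $\rho$ is immediate: $\rho(a,b) = \bigwedge \{r \mid a \le_r b\} = \bigwedge \{r \mid a \le b \dotplus r\}$, and by the adjunction \cref{eq:dotpm-adj} this set equals $\{r \mid a \dotminus b \le r\} = [a \dotminus b, 1]$, whose meet is the attained value $a \dotminus b$. Finally $\dist(a,b) = \rho(a,b) \vee \rho(b,a) = (a \dotminus b) \vee (b \dotminus a)$, which a trivial split on the sign of $a - b$ identifies with $\abs{a-b}$.

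The main obstacle---indeed the only step that is not formal---is the forward implication, where one must exhibit a concrete $u$ realizing the test. The whole content lies in the reduction provided by \cref{rmk:upos-met} together with the remark that $b \dotplus r$ never drops below $r$; it is precisely this that lets a single order-isomorphism $u$ detect the inequality $a \le b \dotplus r$ on the nose, after which the passage to $\rho$ and $\dist$ is routine.
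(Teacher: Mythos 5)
Your proof is correct and takes essentially the same route as the paper's: both arguments use the identical test pair — the linear order-isomorphism $u : [r,1] \cong [0,1]$ extended by $0$, with $v := u((-) \dotplus r)$ — the paper finishing by factoring $(-) \dotplus r = u^\times \circ v$ and applying the adjunction, while you finish by order-reflection of $u$ on $[r,1]$ together with the split on $a \le r$ versus $a > r$, a purely cosmetic difference (your derivation of $\rho$ and $\dist$ from \cref{eq:dotpm-adj} is the same routine step the paper leaves implicit). One nitpick: your chosen $u$ does not exist when $r = 1$, so that case should be disposed of first (as the paper does) by noting that $a \le b \dotplus 1 = 1$ is automatic.
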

\begin{proof}
If $a \le b \dotplus r$, then for every $u, v \in \#U$ with $u((-) \dotplus r) \le v$, we have $u(a) \le u(b \dotplus r) \le v(b)$.

For the converse, the case $r = 1$ is vacuous; thus we may assume $r < 1$.
Note that $(-) \dotplus r : \#I -> \#I$ can be written as $u^\times \circ v$ where $v := 1 \wedge (-)/(1-r)$, $u := v((-) \dotminus r)$, and $u^\times$ is the right adjoint of $u$.
Now from $a \le_r b$ and $u((-) \dotplus r) = v$, we get $u(a) \le v(b)$, whence $a \le u^\times(v(b)) = b \dotplus r$.
\end{proof}

\begin{lemma}
\label{thm:upos-met}
In every $\#U$-poset $A$, we have the following, for $r, s, t \in \#I$, $u, v \in \#U$, $a, b, c \in A$:
\begin{enumerate}[label=(\alph*)]
\item \label{thm:upos-met:mono}
$r \le s \AND a \le_r b \implies a \le_s b$.
\item \label{thm:upos-met:refl}
$\le_0$ is the same as $\le$.
\item \label{thm:upos-met:trans}
$a \le_r b \le_s c \implies a \le_{r \dotplus s} c$.
\item \label{thm:upos-met:met}
$\rho$ is a pseudoquasimetric: $\rho(a, a) = 0$, and $\rho(a, b) + \rho(b, c) \ge \rho(a, c)$.
Thus, $\dist$ is a pseudometric.
\item \label{thm:upos-met:act}
$u((-) \dotplus r) \le v \dotplus s \AND a \le_r b \implies u(a) \le_s v(b)$.
Thus, $\rho(u(a), v(a)) \le \rho(u, v) := \bigvee (u \dotminus v)$, i.e., the $\#U$-action is 1-Lipschitz in the first variable with respect to the $\ell^\infty$-quasimetric on $\#U$.
Moreover, if $u \in \#U$ is uniformly continuous with modulus $\mu : \#I -> \#I$, i.e., $u(r) \dotminus u(s) \le \mu(r \dotminus s)$, then the action of $u$ is uniformly continuous with the same modulus: $\rho(u(a), u(b)) \le \mu(\rho(a, b))$.
\item \label{thm:upos-met:radj}
$u^\times((-) \dotplus r) \le v \dotplus s \AND u(a) \le_r b \implies a \le_s v(b)$ (where $u^\times$ is the right adjoint of $u$).
\end{enumerate}
In a $\#U$-$\Psi^\op$-inflattice, we moreover have, for $\psi, \psi' \in \Psi(A^\op)$:
\begin{enumerate}[resume*]
\item \label{thm:upos-met:inf}
$a \le_r \bigwedge \psi \iff \forall b \in \psi\, (a \le_r b)$.
Thus,
$\rho(\bigwedge \psi, \bigwedge \psi') \le \bigwedge_{a \in \psi} \bigvee_{b \in \psi'} \rho(a, b)$.
\end{enumerate}
\end{lemma}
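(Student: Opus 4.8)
The plan is to prove the parts in the order (b), (e) (whence (a)), (c), (d), (f), (g), exploiting a single recurring device: to manipulate the relations $\le_r$ I precompose the $\#U$-witnesses appearing in \cref{def:upos-met} either with a truncated translation $(-)\dotplus t$ or with its truncated inverse $(-)\dotminus t$, in order to manufacture an intermediate map that still lies in $\#U$ (i.e.\ is a monotone surjection). Part (b) is immediate: as $(-)\dotplus 0=\mathrm{id}\in\#U$, testing $a\le_0 b$ with $u=v=\mathrm{id}$ gives $a\le b$, while monotonicity of the action in both variables gives the converse.

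The engine is (e). To verify $u(a)\le_s v(b)$ I take arbitrary $p,q\in\#U$ with $p((-)\dotplus s)\le q$ and must show $p(u(a))\le q(v(b))$, that is $(pu)(a)\le(qv)(b)$; since $\#U$ is closed under composition I apply the hypothesis $a\le_r b$ to the pair $(pu,qv)\in\#U^2$, and the needed premise $(pu)((-)\dotplus r)\le qv$ follows by post-composing $u((-)\dotplus r)\le v\dotplus s$ with the monotone $p$ and then using $p((-)\dotplus s)\le q$. Part (a) is the case $u=v=\mathrm{id}$ (legal exactly when $r\le s$). The stated consequences of (e) are then routine: with $r=0$, $b=a$ and $s=\rho(u,v)$ we get $\rho(u(a),v(a))\le\rho(u,v)$, and with $u=v$, $s=\mu(r)$, the modulus inequality $u(x\dotplus r)\dotminus u(x)\le\mu((x\dotplus r)\dotminus x)\le\mu(r)$ supplies the premise of (e), giving $\rho(u(a),u(b))\le\mu(r)$ for every $r>\rho(a,b)$, whence uniform continuity after taking the infimum over such $r$ (using continuity of $\mu$).

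For (c) the idea is to exhibit the intermediate witness explicitly, in parallel with (f). Given $u((-)\dotplus(r\dotplus s))\le v$ (the case $r\dotplus s=1$ being vacuous since $\le_1$ always holds), I set $w:=v\circ((-)\dotminus s)$. Precomposition with the continuous $(-)\dotminus s$ keeps $w$ monotone and continuous, and the only nontrivial point, surjectivity, reduces to $w(1)=v(1-s)=1$, which the hypothesis forces at $x=1-s$; hence $w\in\#U$. Using $(x\dotminus s)\dotplus s=x\vee s$ and the vanishing $u|_{[0,r\dotplus s]}=0$ (read off the hypothesis at $x=0$), both $u((-)\dotplus r)\le w$ and $w((-)\dotplus s)\le v$ follow; applying $a\le_r b$ and $b\le_s c$ to these and chaining gives $u(a)\le v(c)$. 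Then (d) is formal: $\rho(a,a)=0$ by (b), and the triangle inequality for $\rho$, hence that $\dist$ is a pseudometric, follows from (c) by choosing $r,s$ just above $\rho(a,b),\rho(b,c)$ and invoking (a).

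Part (f) is the subtlest and is where I expect the real work. Heuristically it is the adjunction computation $a\le u^\times(u(a))\le u^\times(b\dotplus r)\le v(b)\dotplus s$ valid in $\#I$ (\cref{thm:upos-met-r}), but $u^\times$ is merely a right adjoint and need not lie in $\#U$, so it cannot act on $A$; the resolution is again an explicit witness. To test $a\le_s v(b)$ against $P,Q\in\#U$ with $P((-)\dotplus s)\le Q$, I set $p:=Qv\circ((-)\dotminus r)$ and $q:=Qv$. Post-composing $u^\times((-)\dotplus r)\le v\dotplus s$ with $P$ and using $P((-)\dotplus s)\le Q$ yields the crux inequality $Pu^\times((-)\dotplus r)\le Qv$, from which I extract simultaneously $p\ge Pu^\times$ (so $pu\ge Pu^\times u\ge P$ by $u^\times u\ge\mathrm{id}$, giving $P(a)\le p(u(a))$) and $p((-)\dotplus r)\le q$; surjectivity of $p$, i.e.\ $p\in\#U$, comes down to $Qv(1-r)=1$, which the hypotheses force. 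Feeding $(p,q)$ into $u(a)\le_r b$ then gives $P(a)\le p(u(a))\le q(b)=Q(v(b))$, as required. Finally (g): since every $v\in\#U$ preserves $\Psi^\op$-meets, $v(\bigwedge\psi)=\bigwedge_{b\in\psi}v(b)$, so the defining condition for $a\le_r\bigwedge\psi$ decouples over $b\in\psi$, yielding the stated equivalence; the estimate on $\rho(\bigwedge\psi,\bigwedge\psi')$ follows by combining antitonicity of $\rho$ in its first argument (from $a'\le a\le_r c\Rightarrow a'\le_r c$) with this equivalence applied to $\bigwedge\psi'$.
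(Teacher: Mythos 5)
Your proposal is correct and follows essentially the same approach as the paper's proof: parts (a), (b), (d), (g) by the same routine arguments, part (e) by composing the test pair with $u, v$ exactly as in the paper, and parts (c) and (f) by manufacturing an intermediate witness in $\#U$ via a truncated translation — your witness $v \circ ((-) \dotminus s)$ in (c) is the mirror image of the paper's $u((-) \dotplus r)$, and your pair $(Qv \circ ((-) \dotminus r),\, Qv)$ in (f) is literally the paper's, reached via $Pu^\times$ and $u^\times u \ge \mathrm{id}$ rather than the paper's rewriting of the hypothesis as $(-) \dotminus s \le v(u(-) \dotminus r)$. If anything, you are more careful than the paper, which leaves implicit the surjectivity checks (membership in $\#U$ of the constructed witnesses) and the vacuous cases $r \dotplus s = 1$, $s = 1$ that you verify explicitly.
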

\begin{proof}
\cref{thm:upos-met:mono} and \cref{thm:upos-met:refl} are straightforward, as is \cref{thm:upos-met:met} given the previous parts.

\cref{thm:upos-met:trans}
For $u, w \in \#U$ with $u((-) \dotplus (r \dotplus s)) \le w$, we have $v := u((-) \dotplus r) \in \#U$ with $u((-) \dotplus r) \le v$ and $v((-) \dotplus s) \le w$, whence $u(a) \le v(b) \le w(c)$.

\cref{thm:upos-met:act}
For $u', v' \in \#U$ with $u'((-) \dotplus s) \le v'$, we have
$u'(u((-) \dotplus r)) \le u'(v(-) \dotplus s) \le v' \circ v$,
whence $u'(u(a)) \le v'(v(b))$.
For the last assertion: $u(r) \dotminus u(s) \le \mu(r \dotminus s)$ means $u((-) \dotplus r) \le u(-) \dotplus \mu(r)$.

\cref{thm:upos-met:radj}
The assumption is equivalent to $(-) \dotminus s \le v(u(-) \dotminus r)$;
thus for $u', v' \in \#U$ with $u'((-) \dotplus s) \le v'$, we have
$u' \le v'((-) \dotminus s) \le v'(v(u(-) \dotminus r))$,
whence $u'(a) \le v'(v(u(a) \dotminus r)) \le v'(v(b))$.

\cref{thm:upos-met:inf}
$\Longrightarrow$ and the last assertion follow from \cref{thm:upos-met:trans}.
For $\Longleftarrow$: for $u, v \in \#U$ with $u((-) \dotplus r) \le v$, we have
$u(a) \le \bigwedge_{b \in \psi} v(b) = v(\bigwedge \psi)$.
\end{proof}

For general background on (pseudo)quasimetrics, see e.g., \cite{Kqunif}.
A pseudoquasimetric $\rho$ as above induces a topology, where a basic neighborhood of $a \in A$ is $\{b \in A \mid \rho(a, b) < r\}$ for some $r > 0$.
Thus the closure of $B \subseteq A$ is the set of all $a \in A$ such that
\begin{align*}
\rho(a, B) = \bigwedge_{b \in B} \rho(a, b) = 0,
\end{align*}
which is in particular a lower set.
To avoid confusion, we will call a closed set in this topology a \defn{$\rho$-closed lower set}, and denote the set of all such by $\-{\@L}(A) \subseteq \@L(A)$.
We will also say \defn{$\rho^\op$-closed upper set} $B \subseteq A$ for the order-dual notion, i.e., if $\rho(B, a) = 0$ then $a \in B$; the set of all such is thus $\-{\@L}(A^\op)$.
For a $\#U$-$\Psi^\op$-inflattice $A$, recalling that $\Phi(A^\op)$ consists of $\Psi^\op$-filters by soundness, let
\begin{equation*}
\-\Phi(A^\op) := \Phi(A^\op) \cap \-{\@L}(A^\op)
\end{equation*}
denote the \defn{$\rho^\op$-closed $\Psi^\op$-filters} in $A$.

\begin{lemma}
If $\phi \in \Phi(A^\op)$ is a $\Psi^\op$-filter, then so is the $\rho^\op$-closure $\-\phi$.
\end{lemma}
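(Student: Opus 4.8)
The plan is to show that the $\rho^\op$-closure $\overline\phi$ of a $\Psi^\op$-filter $\phi \in \Phi(A^\op)$ is again both a $\Psi^\op$-filter and an element of $\Phi(A^\op)$. Since $\Phi = \Psi^*$ by soundness (\cref{thm:sound}), and $\Phi(A^\op)$ consists exactly of the $\Psi^\op$-filters that also happen to lie in $\Phi$, and since by \cref{rmk:comm-idl} every $\phi \in \Phi(A^\op)$ is automatically a $\Psi$-ideal in $A^\op$ (i.e.\ $\Psi^\op$-filter in $A$), the real content is: closing $\phi$ under the $\rho^\op$-topology preserves closure under $\Psi^\op$-meets, and does not destroy membership in $\Phi(A^\op)$. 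The cleanest route is to verify that $\overline\phi$ is a $\Psi^\op$-filter directly, and then observe that being a $\rho^\op$-closed $\Psi^\op$-filter forces membership in $\Phi(A^\op)$ via soundness.

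First I would check that $\overline\phi$ is closed under $\Psi^\op$-meets. Let $\psi \in \Psi(A^\op)$ with $\psi \subseteq \overline\phi$; I must show $\bigwedge \psi \in \overline\phi$, i.e.\ $\rho(\overline\phi, \bigwedge\psi) = 0$, equivalently $\rho(\phi, \bigwedge\psi) = 0$ since closure doesn't change the $\rho^\op$-closure. For each $b \in \psi$ we have $b \in \overline\phi$, so $\rho(\phi, b) = \bigwedge_{a \in \phi} \rho(a, b) = 0$; fixing $\varepsilon > 0$, pick $a_b \in \phi$ with $\rho(a_b, b) < \varepsilon$. The family $(a_b)_{b \in \psi}$ need not itself be indexed by a $\Psi^\op$-filter, so I cannot take its meet naively. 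The key tool is \cref{thm:upos-met}\cref{thm:upos-met:inf}, which gives
\begin{align*}
\rho\Bigl(\bigwedge_{b \in \psi} a_b,\ \bigwedge \psi\Bigr) \le \bigwedge_{a \in \psi}\, \bigvee_{b \in \psi} \rho(a_{(-)}, b),
\end{align*}
but to apply it I need the $a_b$ to form (or be dominated by) a $\Psi^\op$-meet inside $\phi$. Here is where soundness enters: since $\phi$ is a $\Psi^\op$-filter, it is itself closed under $\Psi^\op$-meets, so $\bigwedge_{b \in \psi} a_b$ exists \emph{in} $\phi$ provided the indexing poset $\psi^\op$ lies in $\Psi$ — which it does by hypothesis. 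Then \cref{thm:upos-met}\cref{thm:upos-met:inf} bounds $\rho(\bigwedge_{b} a_b,\ \bigwedge\psi)$ by $\bigwedge_{b} \bigvee_{b'} \rho(a_{b'}, b) \le \sup_b \rho(a_b, b) < \varepsilon$. Letting $\varepsilon \to 0$ and using that $\bigwedge_b a_b \in \phi$ gives $\rho(\phi, \bigwedge\psi) = 0$, so $\bigwedge\psi \in \overline\phi$. (One must also confirm $\overline\phi$ is an upper set, which is immediate since $\rho^\op$-closed sets are upper sets by the very definition of $\overline{\@L}(A^\op)$.)

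Having shown $\overline\phi$ is a $\Psi^\op$-filter, I then need $\overline\phi \in \Phi(A^\op)$. By soundness, $\Phi(A^\op)$ is precisely the set of $\Psi$-ideals in $A^\op$ (\cref{thm:sound}\cref{thm:sound:idl}), i.e.\ the $\Psi^\op$-filters in $A$ — so once $\overline\phi$ is a $\Psi^\op$-filter, membership in $\Phi(A^\op)$ is automatic, and hence $\overline\phi \in \Phi(A^\op) \cap \overline{\@L}(A^\op) = \overline\Phi(A^\op)$ as desired.

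**The main obstacle** will be the indexing subtlety in the meet step: the naive choice of approximants $a_b$ is indexed by the elements $b$ of $\psi$, and I must ensure that taking their meet stays inside $\phi$ and that \cref{thm:upos-met}\cref{thm:upos-met:inf} applies to the \emph{correct} indexing poset. The clean resolution is that $\psi^\op \in \Psi$ by assumption, and $\phi$ is closed under $\Psi^\op$-meets, so the approximating meet is legitimate and lands in $\phi$; the Lipschitz-type estimate in \cref{thm:upos-met}\cref{thm:upos-met:inf} then does the rest uniformly in $\varepsilon$. I expect no difficulty with the upper-set or triangle-inequality bookkeeping, which follows from \cref{thm:upos-met}\cref{thm:upos-met:trans} and \cref{thm:upos-met:met}.
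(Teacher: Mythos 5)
There is a genuine gap in your meet step, and it sits exactly where you flagged ``the main obstacle.'' In this paper's framework (\cref{def:phi}), a $\Psi^\op$-meet in $A$ is a meet of a \emph{subset} of $A$ which, \emph{as a subposet}, lies in $\Psi^\op$; it is not a meet of an arbitrary family indexed by a poset in $\Psi^\op$. Your approximants $a_b$ are produced by a metric selection, so the assignment $b \mapsto a_b$ has no reason to be monotone, and the set $\{a_b \mid b \in \psi\}$ can be an essentially arbitrary subset of $\phi$ (e.g.\ an infinite antichain even when $\psi$ is a chain). Hence that set need not lie in $\Psi^\op$, its meet need not exist in the $\Psi^\op$-inflattice $A$ at all, and there is no reason it lies in $\phi$. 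So the sentence ``$\bigwedge_{b \in \psi} a_b$ exists in $\phi$ provided the indexing poset $\psi^\op$ lies in $\Psi$'' is precisely the unjustified step, and soundness does not repair it: soundness identifies $\Phi(A^\op)$ with the $\Psi^\op$-filters (that part of your proposal is fine), but it says nothing about non-monotone reindexing of meets.

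What does repair it --- and is the paper's actual two-line proof --- is the standing assumption of this section that $\Phi \ni \omega$ is sound, so that by \cref{thm:sound-omega} every poset in $\Psi$ has finite cofinality; i.e.\ $\Psi^\op$-meets reduce to \emph{finite} meets. Given $\psi \subseteq \overline{\phi}$ with $\psi^\op \in \Psi$, replace $\psi$ by a finite coinitial subset $\{b_1, \dotsc, b_n\}$, choose finitely many approximants $a_i \in \phi$ with $\rho(a_i, b_i) < \varepsilon$, and note that the finite set $\{a_1, \dotsc, a_n\}$ \emph{is} a legitimate $\Psi^\op$-subset of $\phi$, so $a_1 \wedge \dotsb \wedge a_n \in \phi$. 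Then the first claim of \cref{thm:upos-met}\cref{thm:upos-met:inf} together with transitivity (\cref{thm:upos-met}\cref{thm:upos-met:trans}) gives $a_1 \wedge \dotsb \wedge a_n \le_\varepsilon \bigwedge \psi$ (for each $b \in \psi$ pick $b_i \le b$, and chain $a_1 \wedge \dotsb \wedge a_n \le a_i \le_\varepsilon b_i \le b$); note that this Hausdorff-style estimate uses the first claim of \cref{thm:upos-met}\cref{thm:upos-met:inf}, not the weaker displayed ``Thus'' bound you quote, which compares each single $a$ against \emph{all} $b$. Letting $\varepsilon \to 0$ yields $\rho(\phi, \bigwedge\psi) = 0$, i.e.\ $\bigwedge \psi \in \overline{\phi}$. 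In short: without \cref{thm:sound-omega} your argument does not establish that closure under $\Psi^\op$-meets survives metric closure; with it, the proof collapses to ``finite meets are Lipschitz, and Lipschitz operations preserve metric closures,'' which is what the paper says.
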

\begin{proof}
This follows from the facts that $\Psi^\op$ is a class of finite meets by \cref{thm:sound-omega}, and that $\Psi^\op$-meets are Lipschitz by \cref{thm:upos-met}\cref{thm:upos-met:inf}.
\end{proof}

As usual for actions, a subset $B \subseteq A$ of a $\#U$-poset is \defn{$\#U$-invariant} if it is closed under the action.
For a class of sets $\Gamma(A)$, we write $\Gamma^\#U(A)$ for the $\#U$-invariant members, e.g., $\@L^\#U(A), \-\Phi^\#U(A)$.

\begin{lemma}
\label{thm:upos-uclfilt}
If $\phi \in \@P^\#U(A)$ is a $\#U$-invariant filter base, then its $\rho^\op$-closure $\-\phi$ is a $\#U$-invariant $\Psi^\op$-filter, hence is the $\#U$-invariant $\rho$-closed $\Psi^\op$-filter generated by $\phi$.
\end{lemma}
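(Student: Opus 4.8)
The plan is to check, in order, that $\-\phi$ is an upper set, that it is closed under $\Psi^\op$-meets, and that it is $\#U$-invariant, and then to deduce the final ``generated by'' clause formally. That $\-\phi$ is an upper set is immediate from the definition of $\rho^\op$-closure: if $a \in \-\phi$ and $a \le a'$, then for every $b \in \phi$ the triangle inequality (\cref{thm:upos-met}\cref{thm:upos-met:met}) together with $\rho(a,a') = 0$ gives $\rho(b,a') \le \rho(b,a)$, so $\rho(\phi, a') \le \rho(\phi, a) = 0$. For closure under $\Psi^\op$-meets --- which by \cref{thm:sound-omega} are just finite meets --- I would pass to the upward-closure $\phi' := \{a \in A \mid \exists b \in \phi\ (b \le a)\}$, which by downward-directedness of the filter base $\phi$ is already closed under finite meets, hence is a $\Psi^\op$-filter, and which is $\#U$-invariant since each $u \in \#U$ is monotone and $\phi$ is $\#U$-invariant. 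Since $\phi \subseteq \phi' \subseteq \-\phi$, all three have the same $\rho^\op$-closure, so $\-\phi = \-{\phi'}$ is a $\Psi^\op$-filter by the preceding lemma. (Alternatively one argues directly: given $a,a' \in \-\phi$ and $\epsilon > 0$, choose $b,b' \in \phi$ with $\rho(b,a),\rho(b',a') < \epsilon$ and then $c \in \phi$ below both, so $\rho(c,a),\rho(c,a') < \epsilon$, whence $c \le_\epsilon a \wedge a'$ and $\rho(c, a \wedge a') \le \epsilon$ by \cref{thm:upos-met}\cref{thm:upos-met:inf}; thus $a \wedge a' \in \-\phi$.)

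The crux is the $\#U$-invariance of the closure, and this is where the specific nature of $\#U$ enters. Each $u \in \#U$ is a complete lattice endomorphism of $\#I$, so it preserves all joins and all meets, hence is both lower and upper semicontinuous, hence continuous; being a continuous self-map of the compact interval, it is uniformly continuous, say with modulus $\mu$. By \cref{thm:upos-met}\cref{thm:upos-met:act}, the action of $u$ on $A$ is then uniformly continuous for $\rho$ with the same modulus, i.e.\ $\rho(u(b), u(a)) \le \mu(\rho(b,a))$. Now for $a \in \-\phi$ and $\epsilon > 0$ I would pick $b \in \phi$ with $\rho(b,a)$ small enough that $\mu(\rho(b,a)) < \epsilon$; then $u(b) \in \phi$ by $\#U$-invariance of $\phi$, and $\rho(u(b), u(a)) < \epsilon$, so $\rho(\phi, u(a)) = 0$, i.e.\ $u(a) \in \-\phi$. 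Hence $u(\-\phi) \subseteq \-\phi$ for every $u$, so $\-\phi$ is $\#U$-invariant.

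Finally, the ``generated by'' clause is formal: $\-\phi$ is a $\#U$-invariant $\rho^\op$-closed $\Psi^\op$-filter containing $\phi$, and any other such set $G$, being $\rho^\op$-closed and containing $\phi$, must contain the $\rho^\op$-closure $\-\phi$; therefore $\-\phi$ is the smallest one. I expect the only genuine obstacle to be the observation that every $u \in \#U$ is (uniformly) continuous, which is exactly what makes \cref{thm:upos-met}\cref{thm:upos-met:act} applicable to obtain invariance of the closure; the upper-set and filter parts are routine given the preceding lemma and the Lipschitz estimates of \cref{thm:upos-met}.
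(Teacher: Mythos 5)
Your proof is correct and takes essentially the same route as the paper's: the paper likewise obtains $\#U$-invariance of $\-\phi$ from uniform continuity of the action (\cref{thm:upos-met}\cref{thm:upos-met:act}), notes that $\-\phi$ is upper and hence equals the $\rho^\op$-closure of the upward closure of $\phi$, which is a $\Psi^\op$-filter since $\Psi^\op$-meets are finite (\cref{thm:sound-omega}), and then invokes the preceding lemma. Your extra details --- that monotone surjections of $\#I$ are automatically (uniformly) continuous, the parenthetical direct meet argument, and the formal minimality of the closure --- are just elaborations of steps the paper leaves implicit.
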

\begin{proof}
By uniform continuity of the action of each $u$ (\cref{thm:upos-met}\cref{thm:upos-met:act}), $\-\phi$ is $\#U$-invariant.
It is also upper, since every $\rho^\op$-closed set is, thus it is also the $\rho^\op$-closure of the upward closure of $\phi$, which is a $\Psi^\op$-filter since $\Psi^\op$-meets are finite by \cref{thm:sound-omega}, whence so is $\-\rho$ by the preceding lemma.
\end{proof}

\begin{proposition}
\label{thm:upos-ker}
For a $\#U$-$\Psi^\op$-inflattice $A$, we have an order-isomorphism
\begin{align*}
\#U\Psi^\op\!{Inf}(A, \#I) &\cong \-\Phi^\#U(A^\op) = \{\text{$\#U$-invariant $\rho^\op$-closed $\Psi^\op$-filters in } A\} \\
f &|-> f^{-1}(1) \\
1 - \rho(\phi, -) &<-| \phi.
\end{align*}
\end{proposition}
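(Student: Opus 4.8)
The plan is to verify that the two displayed assignments $\Theta\colon f\mapsto f^{-1}(1)$ and $\Xi\colon \phi\mapsto f_\phi := 1-\rho(\phi,-)$, where $\rho(\phi,a)=\bigwedge_{b\in\phi}\rho(b,a)$, are well-defined, mutually inverse, and order-preserving in both directions, hence constitute the asserted order-isomorphism. Throughout I use freely that each $u\in\#U$, being a complete lattice homomorphism, preserves all joins and meets and fixes $0,1$, together with the estimates of \cref{thm:upos-met}.

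First I would check that $\Theta$ is well-defined. Given a morphism $f$, the set $f^{-1}(1)$ is upper and closed under $\Psi^\op$-meets since $f$ is monotone and $\Psi^\op$-meet-preserving (and $\Psi^\op$-meets are finite by \cref{thm:sound-omega}), and it is $\#U$-invariant since $f$ is equivariant and $u(1)=1$. It is $\rho^\op$-closed because $f$ is non-expansive: equivariance and monotonicity give $x\le_r y \implies f(x)\le_r f(y)$, hence $\rho(f(x),f(y))\le\rho(x,y)$, so for $b\in f^{-1}(1)$ we get $1-f(a)=\rho(f(b),f(a))\le\rho(b,a)$ by \cref{thm:upos-met-r}, and taking the meet over such $b$ yields $\rho(f^{-1}(1),a)=0 \implies f(a)=1$. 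Conversely, for $\phi\in\-\Phi^\#U(A^\op)$ we have $a\in\phi\iff\rho(\phi,a)=0\iff f_\phi(a)=1$, the first equivalence being exactly $\rho^\op$-closedness, so $f_\phi^{-1}(1)=\phi$; this is $\Theta\Xi=\mathrm{id}$. The correspondence is order-preserving both ways: $f\le g$ gives $f^{-1}(1)\subseteq g^{-1}(1)$ at once, while $\phi\subseteq\phi'$ gives $\rho(\phi,-)\ge\rho(\phi',-)$ and hence $f_\phi\le f_{\phi'}$.

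The substantive half of the bijection is the round trip $\Xi\Theta=\mathrm{id}$, i.e. that every morphism is recovered as $f(a)=1-\rho(f^{-1}(1),a)$. The inequality $f(a)\ge 1-\rho(f^{-1}(1),a)$ is exactly the non-expansiveness estimate of the previous paragraph. For the reverse I would produce a witness in $f^{-1}(1)$ close to $a$: assuming $0<f(a)<1$ (the cases $f(a)\in\{0,1\}$ being immediate), let $w\in\#U$ rescale $[0,f(a)]$ linearly onto $[0,1]$ and be constant $1$ on $[f(a),1]$, so that $w(f(a))=1$ and $\rho(w,\mathrm{id})=\bigvee_t(w(t)\dotminus t)=1-f(a)$. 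Then $f(w(a))=w(f(a))=1$ by equivariance, so $w(a)\in f^{-1}(1)$, while $\rho(w(a),a)\le\rho(w,\mathrm{id})=1-f(a)$ by the first-variable Lipschitz bound of \cref{thm:upos-met}\cref{thm:upos-met:act}; hence $\rho(f^{-1}(1),a)\le 1-f(a)$, as needed.

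It remains to check that $\Xi$ lands in morphisms, i.e. that $f_\phi$ is monotone, $\Psi^\op$-meet-preserving, and $\#U$-equivariant. Monotonicity is the triangle inequality for $\rho$. For a $\Psi^\op$-meet $\bigwedge\psi$ (with $\psi$ finite), \cref{thm:upos-met}\cref{thm:upos-met:inf} reduces the claim to $\bigwedge_{b\in\phi}\bigvee_{a\in\psi}\rho(b,a)=\bigvee_{a\in\psi}\bigwedge_{b\in\phi}\rho(b,a)$ in $\#I$: here $\ge$ is trivial, and $\le$ holds because $\phi$, being closed under the finite meets $\Psi^\op$, is downward directed, so any finite family of near-witnesses $b_a\in\phi$ has a common lower bound $\bigwedge_a b_a\in\phi$ (using that $\rho$ is antitone in its first variable, \cref{thm:upos-met}\cref{thm:upos-met:trans}). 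The crux is equivariance, $u(f_\phi(a))=f_\phi(u(a))$. Writing $f_\phi=\bigvee_{b\in\phi}g_b$ with $g_b:=1-\rho(b,-)$ and pulling $u$ inside the supremum (as $u$ preserves joins), the inequality $f_\phi(u(a))\ge u(f_\phi(a))$ follows from $\#U$-invariance of $\phi$: for each $b$ one constructs $w\in\#U$ with $w((-)\dotplus r)\le u\dotplus(1-u(1-r))$, where $r=\rho(b,a)$, so that \cref{thm:upos-met}\cref{thm:upos-met:act} gives $\rho(w(b),u(a))\le 1-u(1-r)$ with $w(b)\in\phi$. The reverse inequality instead requires the term-wise lower bound $\rho(b,u(a))\ge 1-u(1-\rho(b,a))$, equivalently $g_b(u(a))\le u(g_b(a))$, for every single $b$. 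This last estimate is the main obstacle I expect: it is a lower bound on $\rho$ after acting by $u$, whereas the bounds in \cref{thm:upos-met} naturally yield upper bounds, so I anticipate deriving it from the adjoint calculus of \cref{thm:upos-met}\cref{thm:upos-met:radj} transported to the order-dual $\#U$-poset, whose action is $1-u(1-(-))$.
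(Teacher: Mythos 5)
Most of your proposal is sound, and its skeleton necessarily parallels the paper's proof (which, for notational ease, proves the order-dual statement for $\#U$-$\Psi$-suplattices, $f \mapsto f^{-1}(0)$, $\phi \mapsto \rho(-,\phi)$): your verification that $f \mapsto f^{-1}(1)$ is well defined, both round trips, the order-preservation, and your witness $w(a) \in f^{-1}(1)$ for $\Xi\Theta = \mathrm{id}$ all correspond to steps in the paper. Your justification of $\Psi^\op$-meet-preservation of $f_\phi$ is genuinely different: the paper invokes the commutation $\Phi \subseteq \Psi^*$ in $\#I$ (\cref{rmk:comm-r}), whereas you use downward directedness of $\phi$; this works, though note that in the last two cases of \cref{thm:sound-omega} a $\Psi^\op$-filter need not be closed under binary meets---there, however, $\Psi^\op$-meets are trivial, so nothing needs checking. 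Your argument for $f_\phi(u(a)) \ge u(f_\phi(a))$ is also essentially the paper's (modulo quantifying over $r > \rho(b,a)$ rather than the possibly unattained infimum).

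The genuine gap is the reverse equivariance inequality $f_\phi(u(a)) \le u(f_\phi(a))$, which you reduce to the term-wise claim $\rho(b, u(a)) \ge 1 - u(1 - \rho(b,a))$ ``for every single $b$'' and then defer. That claim is false, already in $A = \#I$: take $u \in \#U$ piecewise linear through $(0,0)$, $(1/2, 3/4)$, $(1,1)$, and $b = 3/4$, $a = 1/2$; then by \cref{thm:upos-met-r} we have $\rho(b, u(a)) = 3/4 \dotminus 3/4 = 0$, while $1 - u(1 - \rho(b,a)) = 1 - u(3/4) = 1/8$. (This does not contradict the proposition: the only $\#U$-invariant upper set of $\#I$ containing $3/4$ is $[0,1]$ itself, whose $f_\phi \equiv 1$.) Moreover, the reduction itself is mistaken: the reverse inequality does not \emph{require} a bound term-wise in the same $b$; it only requires that for each $b \in \phi$ and $r$ with $b \le_r u(a)$ there exist some \emph{other} element of $\phi$ suitably close to $a$---and no dualized form of \cref{thm:upos-met}\cref{thm:upos-met:radj} can yield a same-$b$ bound anyway, since that lemma outputs $v(b)$, not $b$. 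The correct argument, as in the paper, uses the $\#U$-invariance of $\phi$ a second time: in the dual (suplattice) formulation, given $u(a) \le_r b \in \phi$ with $r < 1$, pick $v \in \#U$ with $u^\times((-) \dotplus r) \dotminus u^\times(r) \le v$, where $u^\times$ is the right adjoint of $u$; then \cref{thm:upos-met}\cref{thm:upos-met:radj} gives $a \le_{u^\times(r)} v(b)$ with $v(b) \in \phi$, whence $u(\rho(a,\phi)) \le u(u^\times(r)) \le r$, and taking the infimum over such $r$ finishes. In your inflattice formulation the same step appears with the left adjoint of $u$ (equivalently, transport the dual action $1 - u(1-(-))$ as you suggest, but use it to produce a new element $v(b) \in \phi$, not a term-wise estimate).
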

\begin{proof}
For ease of notation, we will prove the dual statement that for a $\#U$-$\Psi$-suplattice $A$,
\begin{align*}
\#U\Psi\!{Sup}(A, \#I)^\op &\cong \-\Phi^\#U(A) = \{\text{$\#U$-invariant $\rho$-closed $\Psi$-ideals in } A\} \\
f &|-> f^{-1}(0) \\
\rho(-, \phi) &<-| \phi.
\end{align*}
It is immediate from the definitions that for a $\#U$-equivariant $\Psi$-join-preserving $f : A -> \#I$, $f^{-1}(0) \subseteq A$ is $\#U$-invariant $\rho$-closed lower, and also that a $\rho$-closed lower $\phi \subseteq A$ is equal to $\rho(-, \phi)^{-1}(0)$.

We now check that for a $\#U$-invariant $\Psi$-ideal $\phi \subseteq A$, $\rho(-, \phi) : A -> \#I$ is $\#U$-equivariant $\Psi$-join-preserving (it is clearly monotone).
For $\psi \in \Psi(A)$,
\begin{equation*}
\begin{aligned}
\rho(\bigvee \psi, \phi)
&= \bigwedge_{b \in \phi} \bigvee_{a \in \psi} \rho(a, b) &&\text{by the dual of \cref{thm:upos-met}\cref{thm:upos-met:inf}} \\
&= \bigvee_{a \in \psi} \bigwedge_{b \in \phi} \rho(a, b) &&\text{because $\Phi \subseteq \Psi^*$ (\cref{rmk:comm-r})} \\
&= \bigvee_{a \in \psi} \rho(a, \phi);
\end{aligned}
\end{equation*}
thus $\rho(-, \phi)$ preserves $\Psi$-joins.
To check $\#U$-equivariance: let $u \in \#U$ and $a \in A$.
We have
\begin{align*}
\rho(u(a), \phi) &= \bigwedge_{b \in \phi} \rho(u(a), b) = \bigwedge \{r \in \#I \mid u(a) \le_r b \in \phi\}, \\
u(\rho(a, \phi)) &= u(\bigwedge_{b \in \phi} \rho(a, b)) = \bigwedge_{b \in \phi} u(\rho(a, b)) = \bigwedge \{u(r) \mid a \le_r b \in \phi\}.
\end{align*}
For each $a \le_r b \in \phi$, find
\begin{equation*}
u((-) \dotplus r) \dotminus u(r) \le v \in \#U,
\end{equation*}
whence
$u(a) \le_{u(r)} v(b) \in \phi$
by \cref{thm:upos-met}\cref{thm:upos-met:act}; this proves $u(\rho(a, \phi)) \ge \rho(u(a), \phi)$.
Conversely, for $u(a) \le_r b \in \phi$ with $r < 1$, let $u^\times$ be the right adjoint of $u$, and similarly to before, find
\begin{equation*}
u^\times((-) \dotplus r) \dotminus u^\times(r) \le v \in \#U,
\end{equation*}
whence
$a \le_{u^\times(r)} v(b) \in \phi$
by \cref{thm:upos-met}\cref{thm:upos-met:radj}, whence
$u(\rho(a, \phi)) \le r$;
so $\rho(u(a), \phi) \ge u(\rho(a, \phi))$.

Finally, we check that for $\#U$-equivariant monotone $f : A -> \#I$, we have $f = \rho(-, f^{-1}(0))$.
We have $\le$ since $f$ is $1$-Lipschitz.
Conversely, for $a \in A$ with $f(a) < 1$, find
$(-) \dotminus f(a) \le u \in \#U$
with
$u(f(a)) = 0$;
then $a \le_{f(a)} u(a)$ by \cref{thm:upos-met}\cref{thm:upos-met:act}, so $\rho(a, f^{-1}(0)) \le \rho(a, u(a)) \le f(a)$.
\end{proof}

The $\#U$-poset $\#I$ obeys the following additional axioms, which must thus also hold in the dual of a $\Phi$-continuous lattice:

\begin{definition}
\label{def:upos-arch-compl}
We call a $\#U$-poset $A$ \defn{Archimedean} if it obeys
\begin{equation*}
\forall r > 0\, (a \le_r b)  \implies  a \le b.
\end{equation*}
We call $A$ \defn{(Cauchy-)complete} if it is Archimedean and also complete in the metric $\dist$.
\end{definition}

\begin{definition}
\label{def:upos-stack}
We call a $\#U$-poset $A$ \defn{unstackable} if for any $0 < r < 1$ and $u, v \in \#U$ restricting to order-isomorphisms $u : [0,r] \cong [0,1]$ and $v : [r,1] \cong [0,1]$, we have
\begin{equation*}
u(a) \le u(b) \AND v(a) \le v(b)  \implies  a \le b.
\end{equation*}
We call $A$ \defn{stackable} if it is unstackable and for $r, u, v$ as above and $a, b \in A$ such that $v'(b) \le u'(a)$ for all $u', v' \in \#U$, there is a (unique, by unstackability) $c \in A$ with $u(c) = a$ and $v(c) = b$.
\end{definition}

Intuitively, stackability means that, thinking of $A$ as the dual of a $\Phi$-continuous lattice $X$, we may specify $A \ni a : X -> \#I$ via its restrictions to its sublevel and superlevel sets $a^{-1}([0,r]), a^{-1}([r,1])$.

\begin{remark}
\label{rmk:upos-stack}
As in \cref{rmk:upos-met}, it is enough to take some particular $u, v$ above.
Also, it is enough to take some particular $r$ (e.g., $1/2$), since we may move $r$ around via an order-isomorphism $\#I \cong \#I$.
\end{remark}

\begin{lemma}
\label{thm:upos-stack-n}
If $A$ is (un)stackable, then more generally, for $0 = r_0 < r_1 < \dotsb < r_n = 1$ and $u_1, \dotsc, u_n \in \#U$ restricting to $u_i : [r_{i-1},r_i] \cong [0,1]$, for $a_1, \dotsc, a_n \in A$ such that $v'(a_{i+1}) \le u'(a_i)$ for all $u', v' \in \#U$, there is (at most one, depending monotonically on $(a_1, \dotsc, a_n)$) $a \in A$ with $u_i(a) = a_i$.
\end{lemma}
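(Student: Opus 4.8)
The plan is to induct on $n$, the reduction at each step merging the last two intervals $[r_{n-2}, r_{n-1}]$ and $[r_{n-1}, r_n]$ into the single interval $[r_{n-2}, r_n]$. The base cases $n = 1$ (where $u_1$ is an order-automorphism of $\#I$, so $u_1^{-1} \in \#U$ and $a := u_1^{-1}(a_1)$ is the unique, evidently monotone, solution) and $n = 2$ (which is verbatim \cref{def:upos-stack}, with uniqueness and monotonicity read off from unstackability as explained below) are immediate. The structural fact I would record first is that each $u_i$ is \emph{determined} by its restriction $u_i : [r_{i-1}, r_i] \cong [0,1]$: being a monotone surjection with $u_i(r_{i-1}) = 0$ and $u_i(r_i) = 1$, it is forced to equal $0$ on $[0, r_{i-1}]$ and $1$ on $[r_i, 1]$. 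This is precisely what lets me factor the prescribed $u_{n-1}, u_n$ through a single merged map.

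For the inductive step ($n \ge 3$), I would fix $\bar u \in \#U$ restricting to $\bar u : [r_{n-2}, r_n] \cong [0,1]$ and set $s := \bar u(r_{n-1}) \in (0,1)$, so that $\bar u$ restricts to $[r_{n-2}, r_{n-1}] \cong [0,s]$ and $[r_{n-1}, r_n] \cong [s,1]$. Using the determination fact, I would define $p, q \in \#U$ by $p := u_{n-1} \circ \bar u^{-1}$ on $[0,s]$ extended by $1$ on $[s,1]$, and $q := u_n \circ \bar u^{-1}$ on $[s,1]$ extended by $0$ on $[0,s]$; these give $p : [0,s] \cong [0,1]$, $q : [s,1] \cong [0,1]$, and $p \circ \bar u = u_{n-1}$, $q \circ \bar u = u_n$ on all of $\#I$. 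The given hypothesis for the pair $(a_{n-1}, a_n)$ is exactly $v'(a_n) \le u'(a_{n-1})$ for all $u', v' \in \#U$, which is the compatibility condition needed to invoke stackability (\cref{def:upos-stack}) for $s, p, q$; this yields the unique $\bar a \in A$ with $p(\bar a) = a_{n-1}$ and $q(\bar a) = a_n$.

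I would then check that $(a_1, \dots, a_{n-2}, \bar a)$ is admissible data for the merged $(n-1)$-partition $0 = r_0 < \dots < r_{n-2} < r_n = 1$ with maps $u_1, \dots, u_{n-2}, \bar u$. The compatibilities among $a_1, \dots, a_{n-2}$ are unchanged, so the only new requirement is $v'(\bar a) \le u'(a_{n-2})$ for all $u', v'$. Here I would use that $p$ preserves joins, hence has a right adjoint $p^\times$ with $\mathrm{id} \le p^\times \circ p$: by monotonicity of the action, $v'(\bar a) \le (v' \circ p^\times)(p(\bar a)) = (v' \circ p^\times)(a_{n-1})$, and the latter is $\le u'(a_{n-2})$ by the given hypothesis for the pair $(a_{n-2}, a_{n-1})$. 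Applying the inductive hypothesis then produces a (unique, monotone) $a$ with $u_i(a) = a_i$ for $i \le n-2$ and $\bar u(a) = \bar a$; since $u_{n-1}(a) = p(\bar u(a)) = p(\bar a) = a_{n-1}$ and $u_n(a) = q(\bar a) = a_n$, this completes existence. For ``at most one'' and monotonicity I would run the same reduction formally: two solutions $a, a'$ of the original problem give $\bar u(a), \bar u(a')$ with equal images under $p$ and $q$, hence $\bar u(a) = \bar u(a')$ by unstackability, whence $a = a'$ by the inductive uniqueness; and monotone dependence is the composite of the (unstackability-driven) monotone merge $(a_{n-1}, a_n) \mapsto \bar a$ with the inductive monotone dependence. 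These last two invoke only unstackability, so the parenthetical ``(un)'' of the statement is respected.

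The step I expect to be the main obstacle is the new compatibility $v'(\bar a) \le u'(a_{n-2})$: the merged element $\bar a$ is not literally $a_{n-1}$ --- only its image under $p$ is --- so reducing to the hypothesis genuinely requires the adjunction identity $\mathrm{id} \le p^\times \circ p$ to ``undo'' $p$. The surrounding bookkeeping, namely choosing $\bar u$ and factoring the prescribed $u_{n-1}, u_n$ as $p \circ \bar u$ and $q \circ \bar u$, is routine once the determination-by-restriction fact is in hand.
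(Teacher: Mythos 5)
Your overall strategy---induction on $n$, merging the last two intervals via a chosen $\bar u$, factoring $u_{n-1} = p \circ \bar u$ and $u_n = q \circ \bar u$ using the fact that each $u_i$ is determined by its restriction---is exactly the ``straightforward induction on $n$'' that the paper's one-line proof intends, and your base cases, the factorization, and the uniqueness/monotonicity bookkeeping (which indeed use only unstackability) are all correct. However, the step you yourself single out as the crux---verifying the new compatibility $v'(\bar a) \le u'(a_{n-2})$---fails as written. You bound $v'(\bar a) \le (v' \circ p^\times)(p(\bar a)) = (v' \circ p^\times)(a_{n-1})$ and then invoke the hypothesis for the pair $(a_{n-2}, a_{n-1})$. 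But the hypothesis quantifies only over elements of $\#U$, and the action on $A$ is defined only for elements of $\#U$, whereas $v' \circ p^\times$ is \emph{not} in $\#U$: since $p$ collapses $[s,1]$ to $1$, its right adjoint $p^\times$ sends $y < 1$ to $(p|_{[0,s]})^{-1}(y) < s$ and sends $1$ to $1$, so $p^\times$ is neither continuous nor surjective (its image is $[0,s) \cup \{1\}$), and composing with a surjection $v'$ on the left does not repair this (take $v' = \mathrm{id}$). So the expression $(v' \circ p^\times)(a_{n-1})$ is not even defined in a $\#U$-poset, and no instance ``$v'' = v' \circ p^\times$'' of the hypothesis is available. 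Recall that $\#U$ consists of monotone \emph{surjections}; right adjoints of non-injective members of $\#U$ always skip values, so this is not a borderline technicality.

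The gap is local and fixable, because you do not need the exact adjoint: it suffices to dominate $v'$ by $w \circ p$ for some $w \in \#U$. For instance, take $w := \bigl(v' \circ (p|_{[0,s]})^{-1}\bigr) \vee \mathrm{id}$, which is a continuous monotone map fixing $0$ and $1$, hence a monotone surjection, i.e., $w \in \#U$. Then $v' \le w \circ p$ pointwise: on $[0,s]$ this reads $v'(x) \le v'(x) \vee p(x)$, and on $[s,1]$ the right-hand side is $w(1) = 1$. Monotonicity of the action in the $\#U$-variable now gives $v'(\bar a) \le (w \circ p)(\bar a) = w(p(\bar a)) = w(a_{n-1}) \le u'(a_{n-2})$, where the last inequality is a legitimate instance of the hypothesis for $(a_{n-2}, a_{n-1})$. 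With this one replacement your induction goes through and coincides in structure with the argument the paper leaves to the reader.
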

\begin{proof}
By a straightforward induction on $n$.
\end{proof}

\begin{lemma}
\label{thm:upos-unstack-r}
If $A$ is unstackable, then more generally, for $0 \le r = r_0 < r_1 < \dotsb < r_n = 1$ and $u_1, \dotsc, u_n \in \#U$ with $u_i : [r_{i-1},r_i] \cong [0,1]$, so that $u_i((-) \dotplus r) \in \#U$, for any $a, b \in A$, we have
\begin{equation*}
u_1(a) \le u_1(b \dotplus r) \AND \dotsb \AND u_n(a) \le u_n(b \dotplus r)  \implies  a \le_r b.
\end{equation*}
\end{lemma}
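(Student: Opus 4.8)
The plan is to reduce the conclusion $a \le_r b$ to a single inequality in $A$ and then apply the multi-piece form of unstackability from \cref{thm:upos-stack-n}. By \cref{rmk:upos-met}, to prove $a \le_r b$ it suffices to fix one convenient $u \in \#U$ restricting to an order-isomorphism $u : [r,1] \cong [0,1]$, put $v := u((-)\dotplus r) \in \#U$, and show $u(a) \le v(b)$; I would take $u$ to be the natural such map, vanishing on $[0,r]$. (Throughout, recall that $u_i(b \dotplus r)$ abbreviates the action of the element $u_i((-)\dotplus r) \in \#U$ on $b$.)

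Next I would transport the given subdivision of $[r,1]$ to the codomain. Setting $s_i := u(r_i)$ gives $0 = s_0 < s_1 < \dots < s_n = 1$, and I choose $w_i \in \#U$ restricting to order-isomorphisms $w_i : [s_{i-1}, s_i] \cong [0,1]$, vanishing below $s_{i-1}$ and equal to $1$ above $s_i$. The key observation is that both $u_i$ and $w_i \circ u$ are maps in $\#U$ that vanish on $[0, r_{i-1}]$, equal $1$ on $[r_i, 1]$, and restrict to order-isomorphisms $[r_{i-1}, r_i] \cong [0,1]$ (the behavior of $u_i$ off its block is forced by monotonicity and surjectivity); hence they differ only by reparametrizing this isomorphism, so there is an order-automorphism $g_i$ of $\#I$ — invertible in $\#U$ — with $u_i = g_i \circ w_i \circ u$. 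Composing on the right with $(-)\dotplus r$ and recalling $v = u((-)\dotplus r)$ likewise gives $u_i((-)\dotplus r) = g_i \circ w_i \circ v$. Since the action is by the monoid $\#U$, these factorizations descend to elements of $A$: $u_i(a) = g_i(w_i(u(a)))$ and $u_i(b \dotplus r) = g_i(w_i(v(b)))$.

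Now I feed the hypotheses through these factorizations. Each assumption $u_i(a) \le u_i(b \dotplus r)$ reads $g_i(w_i(u(a))) \le g_i(w_i(v(b)))$; since $g_i$ is invertible in $\#U$, its action on $A$ is an order-isomorphism and in particular reflects $\le$, so $w_i(u(a)) \le w_i(v(b))$ for every $i$. Applying the multi-piece unstackability of \cref{thm:upos-stack-n} to the subdivision $s_0 < \dots < s_n$, the maps $w_i$, and the elements $x := u(a)$ and $y := v(b)$ — whose $w_i$-images are componentwise $\le$ — then yields $u(a) = x \le y = v(b)$, which is exactly what was needed.

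The only genuine subtlety, and the step I expect to require the most care, is invoking \cref{thm:upos-stack-n} in this order-reflecting (rather than reconstructing) direction: that lemma is phrased as providing a unique, monotone reconstruction from the tuple of block-images, and to use its monotonicity I must check that a tuple $(w_i(x))_i$ coming from an actual element satisfies the compatibility hypothesis $v'(w_{i+1}(x)) \le u'(w_i(x))$ for all $u', v' \in \#U$. This is automatic, since in $\#U$ one has $v' \circ w_{i+1} \le u' \circ w_i$ pointwise — because $w_{i+1}$ vanishes on $[0,s_i]$ while $w_i$ equals $1$ on $[s_i,1]$ — and the $\#U$-action is monotone in its first variable (\cref{thm:upos-met}\cref{thm:upos-met:act}). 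Hence both $(w_i(u(a)))_i$ and $(w_i(v(b)))_i$ lie in the domain of the reconstruction map, realized by $u(a)$ and $v(b)$, and its monotonicity delivers $u(a) \le v(b)$. Alternatively, one may bypass \cref{thm:upos-stack-n} entirely and prove the multi-piece order-reflecting statement directly by induction on $n$, splitting off the last block $[s_{n-1},1]$ and combining plain unstackability (\cref{def:upos-stack}) with the inductive hypothesis on $[0,s_{n-1}]$.
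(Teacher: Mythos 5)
Your proof is correct and takes essentially the same route as the paper's: reduce $a \le_r b$ to a single test inequality via \cref{rmk:upos-met}, transport the subdivision of $[r,1]$ through the test map, and conclude by the monotone-dependence (order-reflecting) part of \cref{thm:upos-stack-n}. The only difference is cosmetic — the paper uses $u_i \circ w^{-1}$ directly as the block maps on the transported subdivision, whereas you introduce fresh block maps $w_i$ and correcting automorphisms $g_i$, and you additionally spell out the compatibility-hypothesis check that the paper elides.
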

\begin{proof}
By \cref{rmk:upos-met}, it suffices to check that for $w \in \#U$ with $w : [r,1] \cong [0,1]$, we have $w(a) \le w(b \dotplus r)$;
this follows from applying the preceding lemma to $u_i \circ w^{-1} : [w(r_{i-1}),w(r_i)] \cong [0,1]$.
\end{proof}

\section{The duality}
\label{sec:dual}

Let $\!{CSt}\#U\Psi^\op\!{Inf} \subseteq \#U\Psi^\op\!{Inf}$ denote the full subcategory of complete stackable $\#U$-$\Psi^\op$-inflattices.
Since the $\Phi$-continuous lattice and $\#U$-$\Psi^\op$-inflattice structures on $\#I$ commute, we have a dual adjunction
\begin{equation}
\label{eq:dual-adj}
\begin{tikzcd}[row sep=0pt, column sep=6em]
\Phi\!{CtsLat}^\op \rar[shift left, "{\Phi\!{CtsLat}(-, \#I)}"] &
\!{CSt}\#U\Psi^\op\!{Inf} \subseteq \#U\Psi^\op\!{Inf}. \lar[shift left, "{\#U\Psi^\op\!{Inf}(-, \#I)}"]
\end{tikzcd}
\end{equation}

\begin{theorem}
\label{thm:cts-dual}
For every $\Phi$-continuous lattice $X$, the evaluation map
\begin{align*}
\eta : X &--> \#U\Psi^\op\!{Inf}(\Phi\!{CtsLat}(X, \#I), \#I) \\
x &|--> (f |-> f(x)),
\end{align*}
which is the (co)unit on the left side of the above adjunction, is an order-isomorphism.
\end{theorem}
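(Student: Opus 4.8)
The plan is to establish separately that $\eta$ is monotone, order-reflecting, and surjective; a monotone bijection reflecting the order is an order-isomorphism, so these combine to give the result, and the last is the crux. Writing $A := \Phi\!{CtsLat}(X,\#I)$, well-definedness and monotonicity are routine: the $\#U$-action and $\Psi^\op$-meets on $A$ are computed pointwise (the latter landing back in $A$ precisely because $\Psi^\op$-meets commute with $\Phi$-joins in $\#I$, \cref{rmk:comm-r}), so evaluation $\eta(x)$ is $\#U$-equivariant and preserves $\Psi^\op$-meets, while $x \le x'$ gives $\eta(x) \le \eta(x')$ pointwise. That $\eta$ reflects the order—hence is injective—is exactly the Urysohn--Lawson lemma, that the evaluation $X \to \#I^A$ is an order-embedding.

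For surjectivity I would pass through \cref{thm:upos-ker}, which identifies $\#U\Psi^\op\!{Inf}(A,\#I)$ with the set $\-\Phi^\#U(A^\op)$ of $\#U$-invariant $\rho^\op$-closed $\Psi^\op$-filters in $A$, under which $\eta(x)$ corresponds to $\phi_x := \{f \in A \mid f(x) = 1\}$. Thus it suffices to realise every $\phi \in \-\Phi^\#U(A^\op)$ as some $\phi_x$. The candidate point is built from level sets: since each $f \in A$ preserves arbitrary meets, $f^{-1}(1)$ is closed under meets, so $f^{-1}(1) = \up m_f$ for $m_f := \bigwedge f^{-1}(1)$, and I set $x := \bigvee_{f \in \phi} m_f$. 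The inclusion $\phi \subseteq \phi_x$ is immediate, as $x \ge m_f$ forces $f(x) \ge f(m_f) = 1$; in terms of the morphism $g = 1 - \rho(\phi,-)$ corresponding to $\phi$, this says $\eta(x) \ge g$.

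The key structural observation making the reverse inclusion tractable is that this join is in fact a $\Phi$-join. Indeed $f \mapsto m_f$ is monotone and, when $\Psi^\op$ is finite meets, sends the binary meet $f \wedge f'$ (computed pointwise in $A$, which is legitimate precisely because finite meets commute with directed joins) to $m_f \vee m_{f'}$; since a $\Psi^\op$-filter is down-directed, $\{m_f \mid f \in \phi\}$ is up-directed, and directed joins are $\Phi$-joins because $\Phi \ni \omega$ contains all directed posets (\cref{thm:sound-omega}). Hence any $h \in A$ preserves it: $h(x) = \bigvee^\uparrow_{f \in \phi} h(m_f)$.

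The main obstacle is the reverse inclusion $\phi_x \subseteq \phi$, equivalently $f(x) \le g(f)$ for all $f$. The directed-join identity reduces this to comparing $f$ with elements of $\phi$ only at the points $m_{f'}$, which controls the ``top level'' but not the behaviour of $f$ at lower values; bridging this gap is where the remaining hypotheses must enter, and I expect it to be the hard part. I would use $\#U$-invariance of $\phi$ to transport the top-level agreement to every level $0 < r < 1$ (replacing $f$ by $v \circ f$ for $v \in \#U$ with $v^{-1}(1) = [r,1]$), then invoke stackability of $A$—via \cref{thm:upos-stack-n,thm:upos-unstack-r}—to glue these level-wise approximants into a single element of $A$ approximating $f$ from above in the metric $\rho$, and finally use completeness together with $\rho^\op$-closedness of $\phi$ to conclude. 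The completely distributive case, where $\Psi^\op$ consists only of trivial meets, needs separate care, since there directedness of $\{m_f\}$ is not automatic and must instead be extracted from $\#U$-invariance and $\rho^\op$-closedness directly.
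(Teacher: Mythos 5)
Your skeleton agrees with the paper's proof up to the crux.
Writing $A := \Phi\!{CtsLat}(X, \#I)$ as you do: under \cref{thm:upos-ker}, $\eta$ indeed becomes $x \mapsto \phi_x := \{f \in A \mid f(x) = 1\}$; your candidate $x := \bigvee_{f \in \phi} m_f$ is precisely the value at $\phi$ of the left adjoint the paper constructs, namely $\bigvee_{f \in \phi} f^+(1)$ (note $m_f = f^+(1)$); the inclusion $\phi \subseteq \phi_x$ is the easy (unit) inequality; and order-reflection is the same Urysohn--Lawson input the paper uses. Two of your side remarks are unnecessary: since $\phi \in \Phi(A^\op)$, the poset $\phi^\op$ lies in $\Phi$ itself, so $\bigvee_{f \in \phi} m_f$ is a $\Phi$-join indexed by $\phi^\op$ in all four cases of \cref{thm:sound-omega}; no directedness argument is needed, and the completely distributive case needs no separate care at all (there every join is a $\Phi$-join).

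The genuine gap is the reverse inclusion $\phi_x \subseteq \phi$, which you only outline, and the outline rests on tools that cannot close it. Gluing level-wise approximants by stackability produces an element of $A$, but nothing in your plan certifies that this element --- or any element $\rho$-close to $f$ --- lies in $\phi$: membership in $\phi$ can only be witnessed via upward closure, $\#U$-invariance, $\Psi^\op$-meets (which are trivial in the completely distributive case), and $\rho^\op$-closure, so in effect you must exhibit, for every $\epsilon > 0$, some $f_0 \in \phi$ and $u \in \#U$ with $u \circ f_0 \le f \dotplus \epsilon$, and your sketch never produces such a pair. (Also, to invoke $\rho^\op$-closedness you need approximants $h \in \phi$ with $\rho(h, f)$ small, i.e., approximants of $f$ from \emph{below}, not ``from above''.) The missing idea --- the paper's one nontrivial step --- is an adjoint-composition trick with no metric content: if $g \in A$ satisfies $g(m_{f_0}) = 1$ for some $f_0 \in \phi$, i.e., $g^+(1) \le f_0^+(1)$, then $g \circ f_0^+ : \#I \to \#I$ preserves directed joins (since $\Phi \ni \omega$ forces $\Phi \supseteq$ directed posets by \cref{thm:sound-omega}) and fixes $1$, hence dominates some $u \in \#U$; then $g \ge g \circ f_0^+ \circ f_0 \ge u \circ f_0 \in \phi$ by $\#U$-invariance, so $g \in \phi$ by upward closure. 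Since $\eta$ preserves the $\Phi$-join $x = \bigvee_{f_0 \in \phi} m_{f_0}$ (such joins being computed pointwise in the double dual), $\phi_x$ is the least upper bound in $\-\Phi^\#U(A^\op)$ of the filters $\phi_{m_{f_0}}$, each contained in $\phi$ by the trick; hence $\phi_x \subseteq \phi$. In particular, the paper's proof of this theorem uses neither stackability nor completeness (those hypotheses are needed only for the companion \cref{thm:uinf-dual}), so besides being incomplete, your route would make the result appear to depend on structure it does not need; any salvage of your level-wise plan still has to pass through the Scott-continuity trick above, at which point the gluing machinery becomes superfluous.
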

\begin{proof}
Via \cref{thm:cts-ladj,thm:upos-ker}, $\eta$ corresponds to the map
\begin{alignat*}{3}
\~\eta : X &--> \-\Phi^\#U({<<^\Phi}\!{Sup}(\#I, X)) \subseteq \@L({<<^\Phi}\!{Sup}(\#I, X)) \\
x &|--> \{f^+ \in {<<^\Phi}\!{Sup}(\#I, X) \mid f^+(1) \le x\}
\end{alignat*}
whose left adjoint is easily seen to be
\begin{align*}
\~\eta^+ : \@L({<<^\Phi}\!{Sup}(\#I, X)) &--> X \\
\phi &|--> \bigvee_{f^+ \in \phi} f^+(1).
\end{align*}
That $x \le \~\eta^+(\~\eta(x))$ is Urysohn's lemma for $\Phi$-continuous lattices; see \cite[IV-3.1, IV-3.32]{GHKLMS}, \cite[VII~1.14, 3.2]{Jstone}, \cite{Xctslat}.
Since $x = \bigvee \waydown x$, it suffices to show that for each $y << x$ there is $f^+ \in {<<^\Phi}\!{Sup}(\#I, X)$ with $y \le f^+(1) \le x$.
Let $\#I_2 \subseteq \#I$ be the dyadic rationals, define $g : \#I_2 -> X$ by $g(0) := y$, $g(1) := x$, and inductively using interpolation (\cref{thm:way-props}\cref{thm:way-props:interp}) so that $r < s \implies g(r) << g(s)$; then $f^+(r) := \bigvee g(\#I_2 \cap [0,r))$ works.

Now let $\phi \in \-\Phi^\#U({<<^\Phi}\!{Sup}(\#I, X))$; we must show $\~\eta(\~\eta^+(\phi)) \subseteq \phi$.
Since $\~\eta$ preserves $\Phi$-joins,
\begin{align*}
\~\eta(\~\eta^+(\phi)) = \bigvee_{f^+ \in \phi} \~\eta(f^+(1)).
\end{align*}
For each $f^+ \in \phi$ and $g^+ \in \~\eta(f^+(1))$, i.e., $g^+(1) \le f^+(1)$, we have $1 \le g(f^+(1))$, thus there is $g \circ f^+ \ge u \in \#U$, whence $g \ge u \circ f$, so $g^+ \le (u \circ f)^+ \in \phi$ since $\phi$ is $\#U$-invariant; thus $\~\eta(f^+(1)) \subseteq \phi$.
\end{proof}

\begin{theorem}
\label{thm:uinf-dual}
For every Archimedean unstackable $\#U$-$\Psi^\op$-inflattice $A$, the evaluation map
\begin{align*}
\iota : A &--> \Phi\!{CtsLat}(\#U\Psi^\op\!{Inf}(A, \#I), \#I) \\
a &|--> (f |-> f(a))
\end{align*}
is an embedding.
If $A$ is stackable, its image is dense; thus if $A$ is also complete, it is an isomorphism.
\end{theorem}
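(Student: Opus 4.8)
The plan is to prove the three assertions—that $\iota$ is an order-embedding, that its image is dense, and that completeness upgrades this to an isomorphism—by first establishing that $\iota$ is an \emph{isometry} for $\dist$, and then feeding this into a routine density-and-completeness argument. Write $X := \#U\Psi^\op\!{Inf}(A, \#I)$ for the dual $\Phi$-continuous lattice and $B := \Phi\!{CtsLat}(X, \#I)$ for the double dual. Since $\#I$ satisfies the axioms of \cref{def:upos-arch-compl,def:upos-stack}, so does $B$, i.e.\ $B$ is a complete, stackable, Archimedean $\#U$-$\Psi^\op$-inflattice, and $\iota$ is a morphism of $\#U$-$\Psi^\op$-inflattices (equivariance and $\Psi^\op$-meet-preservation hold because all operations of $B$ are computed pointwise from those of $\#I$). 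Because the order on $B$ is pointwise, one checks directly that $\iota a \le_r \iota b$ in $B$ is equivalent to $\forall f \in X\,(f(a) \le f(b) \dotplus r)$ (unwinding the action $u\cdot\iota a = (f \mapsto u(f(a)))$ and applying \cref{thm:upos-met-r}), so that $\rho_B(\iota a, \iota b) = \bigvee_{f \in X}(f(a) \dotminus f(b))$. The inequality $\rho_B(\iota a, \iota b) \le \rho(a,b)$ is then immediate, since every $f \in X$ is $1$-Lipschitz: being $\#U$-equivariant and monotone, $a \le_r b$ implies $f(a) \le_r f(b)$.

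The reverse inequality $\rho_B(\iota a, \iota b) \ge \rho(a,b)$ is the Urysohn-type content, and I would first reduce it to plain order-reflection. Fix $r < \rho(a,b)$, so $a \not\le_r b$; by \cref{rmk:upos-met} there is $w \in \#U$ restricting to $[r,1] \cong [0,1]$ (and $=0$ on $[0,r]$) with $w(a) \not\le v(b)$, where $v := w((-)\dotplus r) \in \#U$. Now $w(a) \not\le v(b)$ is an ordinary strict non-inequality in $A$: \emph{if} some morphism $g \in X$ has $g(w(a)) \not\le g(v(b))$, then by equivariance $w(g(a)) > v(g(b)) = w(g(b) \dotplus r)$ in $\#I$, whence $g(a) > g(b) \dotplus r$ by monotonicity of $w$, witnessing $\rho_B(\iota a, \iota b) > r$. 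Thus the embedding claim reduces to: for $a \not\le b$ there is a morphism $g : A \to \#I$ with $g(a) > g(b)$. By \cref{thm:upos-ker} a morphism is the same as a $\#U$-invariant $\rho^\op$-closed $\Psi^\op$-filter $\phi$ (with $g = 1 - \rho(\phi, -)$), and by \cref{thm:upos-uclfilt} any $\#U$-invariant filter base closes up to one. The separating filter cannot be the \emph{principal} filter generated by $a$: its $\#U$-invariant shrinkings $u(a)$ drive $\rho(\phi, b)$ to $0$, so $g(b)=1$ and no separation occurs. Instead I would build a \emph{level filter}, iterating unstackability in the contrapositive form of \cref{thm:upos-unstack-r}: bisecting $\#I$, the defect $a\not\le b$ persists in the lower or upper half, giving a nested dyadic sequence of intervals shrinking to a point $t$, together with $\#U$-maps $w_n : I_n \cong [0,1]$ satisfying $w_n(a) \not\le w_n(b)$, from which the invariant filter base of ``elements maximal on the levels $I_n$'' closes up to a $\phi$ playing the role of ``evaluation at $t$''. \emph{Turning the localized defect into an honest invariant $\rho^\op$-closed $\Psi^\op$-filter is the main obstacle}, the point being that principal filters are useless and only unstackability—not the stacking existence clause—may be invoked here, matching the hypothesis of the first assertion.

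For density I would show that every $G \in B$ is approximated to within any $\epsilon > 0$, in the $\dist_B$-topology, by some $\iota(a)$, i.e.\ $\bigvee_{f \in X} |f(a) - G(f)| < \epsilon$. Choose $0 = r_0 < \cdots < r_n = 1$ of mesh $< \epsilon$ with $u_i \in \#U$ restricting to $u_i : [r_{i-1}, r_i] \cong [0,1]$. Reading the morphism $G : X \to \#I$ through these level intervals prescribes ``level pieces'' $a_1,\dots,a_n \in A$ that are mutually compatible in the sense demanded by \cref{thm:upos-stack-n} (the condition $v'(a_{i+1}) \le u'(a_i)$ encoding monotonicity of $G$ across each cut); stackability then produces a single $a \in A$ with $u_i(a) = a_i$, which by construction agrees with $G$ up to the mesh $\epsilon$ on each level, giving $\dist_B(\iota a, G) < \epsilon$. \emph{Realizing the compatible level pieces inside $A$ and verifying that the glued $a$ approximates $G$ uniformly is the second technical crux}; here the full stacking (existence) axiom is essential, which is exactly why density, unlike the embedding, needs stackability rather than mere unstackability.

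Finally I would assemble the pieces. The isometry $\rho_B(\iota a, \iota b) = \rho(a,b)$ yields $\dist_B(\iota a, \iota b) = \dist(a,b)$, so $\iota$ is an isometric embedding; together with Archimedeanness of $A$ (since $\iota a \le \iota b \implies \rho_B(\iota a,\iota b)=0 \implies \rho(a,b)=0 \implies a \le b$) this shows $\iota$ is an order-embedding, proving the first assertion. If $A$ is stackable, the density of the previous paragraph holds. If moreover $A$ is complete, then $\iota(A)$, being the isometric image of a Cauchy-complete space, is itself Cauchy-complete, hence closed in the Archimedean metric space $B$; a closed dense subset is all of $B$, so $\iota$ is onto. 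A bijective order-embedding that preserves the $\#U$-action and $\Psi^\op$-meets has an inverse that is again such a morphism, so $\iota$ is an isomorphism in $\#U\Psi^\op\!{Inf}$.
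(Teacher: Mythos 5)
Your outline reproduces the paper's high-level architecture (metric comparison, then density, then completeness), and the parts you actually carry out are correct: the pointwise computation of $\le_r$ in the double dual, the Lipschitz inequality $\rho_B(\iota a, \iota b) \le \rho(a,b)$, the reduction of the reverse inequality to order-reflection of $\iota$, and the closing assembly. But the two steps you explicitly defer (``the main obstacle'' and ``the second technical crux'') are not technical residue --- they are the entire content of the theorem, they are exactly where unstackability and stackability enter, and your sketches for them do not work as written. For the embedding: your reduction to exact separation ($a \not\le b$ implies some morphism $g$ has $g(a) \not\le g(b)$) is just the contrapositive restatement of order-reflection, so nothing is gained by it; and the paper never proves exact separation directly. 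Its route is to first establish the key identification $\tilde\iota(a)(r) = \-{U_r(a)}$, the $\rho^\op$-closure of $U_r(a) = \{u(a) \mid u \in \#U,\ u(r) = 1\}$, and then to argue \emph{approximately}: assuming $\-{U_r(a)} \supseteq U_r(b)$ for all $r$, one extracts along a partition of $[0,1]$ elements with $u_i(a) \le_{1/n} v_i(b)$ (possible only because $v_i(b)$ lies in the \emph{closure} of $U_{i/n}(a)$, which gives separation up to $1/n$, never exactly), feeds these into \cref{thm:upos-unstack-r} to get $a \le_{2/n} b$, and invokes Archimedeanness at the very end. Your bisection scheme must instead manufacture an exactly separating $\#U$-invariant $\rho^\op$-closed $\Psi^\op$-filter from $a \not\le b$ alone; the proposed ``filter base of elements maximal on the levels $I_n$'' is never defined, and the defect $w_n(a) \not\le w_n(b)$ can degenerate as $I_n$ shrinks to the point $t$ (in the spatial picture, the values at which $a$ exceeds $b$ inside the window can collapse onto the single level $t$), so the limiting ``evaluation at $t$'' need not separate $a$ from $b$ at all. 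This degeneration is precisely why the paper's argument is quantitative rather than exact.

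For density, the unaddressed issue is how to produce the level pieces $a_i$ \emph{inside} $A$ from an abstract double-dual element $G$. The paper gets them from $\Phi$-continuity of the dual lattice $\-\Phi^\#U(A^\op)$: writing $f^+$ for the transpose of $G$, one has $f^+((i-1)/n) << f^+(i/n) = \bigvee_{a \in f^+(i/n)} \bigvee_{r<1} \-{U_r(a)}$, so the way-below relation yields a \emph{single} $a_i \in f^+(i/n)$ and $r_i < 1$ with $f^+((i-1)/n) \subseteq \-{U_{r_i}(a_i)}$; only after a further argument verifying the compatibility hypotheses for the modified pieces $u_i(a_i)$ can \cref{thm:upos-stack-n} be applied, and the final estimate $\dist(f, \iota(a)) \le 2/n$ again runs through the identification $\tilde\iota(a)(r) = \-{U_r(a)}$. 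Your phrase ``reading the morphism $G$ through these level intervals prescribes level pieces'' presupposes exactly this extraction. Note that both of your deferred cruxes rest on the same missing lemma --- the computation of $\tilde\iota(a)(r)$ as $\-{U_r(a)}$ --- which is the actual keystone of the paper's proof; without proving it first, neither half of your outline can be completed.
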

\begin{proof}
Via \cref{thm:cts-ladj,thm:upos-ker}, $\iota$ corresponds to the map
\begin{align*}
\~\iota : A &--> {<<^\Phi}\!{Sup}(\#I, \-\Phi^\#U(A^\op))^\op \\
a &|--> (r |-> \min \{\phi \in \-\Phi^\#U(A^\op) \mid r \le 1 - \rho(\phi, a)\}).
\end{align*}
We claim that in fact, for $r > 0$, $\~\iota(a)(r)$ is the $\rho^\op$-closure $\smash{\-{U_r(a)}}$ of
\begin{equation*}
U_r(a) := \{u(a) \mid u \in \#U \AND u(r) = 1\}.
\end{equation*}
$\smash{\-{U_r(a)}}$ is a $\#U$-invariant $\Psi^\op$-filter by \cref{thm:upos-uclfilt}.
Each $u(a) \in \smash{U_r}(a)$ is in each $\phi \in \smash{\-\Phi^\#U(A^\op)}$ with $r \le 1 - \rho(\phi, a)$: if $u(s) = 1$ for some $s < r$, we may let $b \in \phi$ with $b \le_{1-s} a$ to get $\phi \ni u(b \dotminus (1-s)) \le u(a)$, while if there is no such $s$, we may write $u$ as a limit of $u_0, u_1, \dotsc$ for which there are such $s$, then use that $\phi$ is closed.
And $r \le 1 - \rho(\smash{\-{U_r(a)}}, a)$: letting
$(-) \dotplus (1-r) \ge u \in \#U$ with $u(r) = 1$,
we have $U_r(a) \ni u(a) \le_{1-r} a$ by \cref{thm:upos-met}\cref{thm:upos-met:act}.
This proves the claim.

Now to show that $\~\iota$ is an order-embedding: let $\~\iota(a) \ge \~\iota(b) : \#I -> \-\Phi^\#U(A^\op)$, i.e., $\smash{\-{U_r(a)}} \supseteq U_r(b)$ for every $r > 0$;
since $A$ is Archimedean, it suffices to show $a \le_{2/n} b$ for all $n \ge 3$.
For $i = 1, \dotsc, n$, let
\begin{equation}
\tag{$*$}
\label{thm:uinf-dual:v_i}
v_i \in \#U, \quad
v_i : [(i-1)/n,i/n] \cong [0,1].
\end{equation}
Then $v_i(b) \in U_{i/n}(b)$, so there is $u_i \in \#U$ with $u_i(i/n) = 1$ such that 
\begin{equation*}
u_i(a) \le_{1/n} v_i(b).
\end{equation*}
Let $u', v' \in \#U$ with $u'((-) \dotplus 1/n) \le v'$; then for $2 \le i \le n-1$, we have
$v_{i+1}(a) \le u'(u_i(a)) \le v'(v_i(b)) \le v_{i+1}(b \dotplus 2/n)$
since $v_{i+1}(i/n) = 0$, $u'(u_i(i/n)) = 1$, $v'(v_i((i-1)/n)) = 0$, and $v_{i+1}((i-1)/n \dotplus 2/n) = 1$.
Thus since $A$ is unstackable, by \cref{thm:upos-unstack-r} we have $a \le_{2/n} b$.

Finally, suppose $A$ is stackable, and let $f^+ \in {<<^\Phi}\!{Sup}(\#I, \-\Phi^\#U(A^\op))$, left adjoint to $f$;
we will find, for every $n \ge 2$, some $a \in A$ with $\dist(\iota(a), f) \le 2/n$.
For $i = 1, \dotsc, n$, we have
$f^+((i-1)/n) << f^+(i/n) = \bigvee_{a \in f^+(i/n)} \-{U_1(a)} = \bigvee_{a \in f^+(i/n)} \bigvee_{r < 1} \-{U_r(a)}$
(again by \cref{thm:upos-uclfilt}), whence
\begin{equation*}
f^+((i-1)/n) \subseteq \-{U_{r_i}(a_i)}
\end{equation*}
for some $a_i \in f^+(i/n)$ and $r_i < 1$.
Let $u_i \in \#U$ with $u_i(r_i) = 0$, and let $v_i$ as in \cref{thm:uinf-dual:v_i}.
Then for $u' \in \#U$,
\begin{equation*}
f^+((i-1)/n) \subseteq \up u'(u_i(a_i)) \subseteq \-{U_1(u_i(a_i))},
\end{equation*}
since for $b \in f^+((i-1)/n) \subseteq \smash{\-{U_{r_i}(a_i)}}$, for every $s > 0$, there is $u'' \in \#U$ with $u''(r_i) = 1$, whence $u' \circ u_i \le u''$, such that $u'(u_i(a_i)) \le u''(a_i) \le_s b$, whence $u'(u_i(a_i)) \le b$ since $A$ is Archimedean.
In particular, this holds for $b = v'(u_{i-1}(a_{i-1}))$ for every $v' \in \#U$, so by \cref{thm:upos-stack-n}, there is $a \in A$ with
\begin{equation*}
v_i(a) = u_i(a_i)
\end{equation*}
for each $i$.
Then
\begin{equation*}
U_{i/n}(a) = U_1(v_i(a)) = U_1(u_i(a_i)),
\end{equation*}
since every $u \in \#U$ with $u(i/n) = 1$ is $\ge u' \circ v_i$ for some $u' \in \#U$.
We now show that $\dist(f, \iota(a)) \le 2/n$,
in terms of the left adjoints $f^+, \~\iota(a)$:
for each $t \in \#I$, letting $1 \le i \le n$ with $t \le i/n \le t \dotplus 1/n$,
\begin{align*}
\~\iota(a)(t) = \-{U_t(a)}
&\subseteq \-{U_{i/n}(a)}
= \-{U_1(u_i(a_i))}
\subseteq f^+(i/n)
\subseteq f^+(t \dotplus 1/n), \\
f^+(t \dotminus 1/n)
&\subseteq f^+((i-1)/n)
\subseteq \-{U_1(u_i(a_i))}
= \-{U_{i/n}(a)}
\subseteq \-{U_{t \dotplus 1/n}(a)} = \~\iota(a)(t \dotplus 1/n).
\qedhere
\end{align*}
\end{proof}

\begin{theorem}
The dual adjunction \cref{eq:dual-adj} is a dual equivalence of categories between $\Phi$-continuous lattices and complete stackable $\#U$-$\Psi^\op$-inflattices.
\qed
\end{theorem}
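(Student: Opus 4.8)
The plan is to deduce the equivalence formally from the two preceding theorems, using the standard fact that a dual adjunction is a dual equivalence exactly when both of its (co)unit components are isomorphisms at every object. Since the adjunction \cref{eq:dual-adj} is already established and its two (co)units are the evaluation maps $\eta$ (on the $\Phi\!{CtsLat}$ side) and $\iota$ (on the $\!{CSt}\#U\Psi^\op\!{Inf}$ side), it suffices to check that each is an isomorphism in the respective category.

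First I would invoke \cref{thm:cts-dual}, which gives that $\eta_X : X \to \#U\Psi^\op\!{Inf}(\Phi\!{CtsLat}(X, \#I), \#I)$ is an order-isomorphism for every $\Phi$-continuous lattice $X$. As $\eta_X$ is a morphism of $\Phi$-continuous lattices and an order-isomorphism, its inverse is monotone and hence preserves arbitrary meets and $\Phi$-joins, so it too is a morphism; thus $\eta_X$ is an isomorphism in $\Phi\!{CtsLat}$, and $\eta$ is a natural isomorphism. For the other side, the objects of $\!{CSt}\#U\Psi^\op\!{Inf}$ are by definition the complete stackable $\#U$-$\Psi^\op$-inflattices, each of which is in particular Archimedean and unstackable, so \cref{thm:uinf-dual} applies and yields that $\iota_A : A \to \Phi\!{CtsLat}(\#U\Psi^\op\!{Inf}(A, \#I), \#I)$ is an isomorphism; the same ``bijective morphism with monotone inverse'' argument (the inverse is automatically equivariant and $\Psi^\op$-meet-preserving) makes $\iota_A$ an isomorphism in $\!{CSt}\#U\Psi^\op\!{Inf}$. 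With both $\eta$ and $\iota$ natural isomorphisms, \cref{eq:dual-adj} is a dual equivalence.

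At this level there is essentially no obstacle left: all of the genuine content has been absorbed into \cref{thm:cts-dual,thm:uinf-dual} (respectively Urysohn's lemma for $\Phi$-continuous lattices, and the stackability-plus-completeness reconstruction). The only point requiring any care is the passage from ``order-isomorphism'' to ``isomorphism in the locally ordered category,'' which is harmless because bijectivity forces the inverse to carry along all of the preserved structure.
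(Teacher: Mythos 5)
Your proposal is correct and matches the paper's (implicit) argument exactly: the theorem is stated with an immediate \qed precisely because \cref{thm:cts-dual} and \cref{thm:uinf-dual} show that both (co)units of the already-established adjunction \cref{eq:dual-adj} are isomorphisms on every object of the respective categories. Your extra care about passing from ``order-isomorphism'' to ``isomorphism in the locally ordered category'' is a fine point the paper leaves tacit, and it is handled correctly.
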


It is worth explicitly restating the duality for the two main examples of $\Phi$:

\begin{corollary}
\label{thm:cdlat-dual}
Hom into $\#I$ yields a dual equivalence of categories between completely distributive lattices and complete stackable $\#U$-posets.
\qed
\end{corollary}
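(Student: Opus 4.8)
The plan is to obtain this as a direct specialization of the dual equivalence just established (the preceding theorem, combining \cref{thm:cts-dual,thm:uinf-dual}), applied to the join doctrine $\Phi = $ all posets. By \cref{ex:cts-cd} this $\Phi$ has $\Phi$-continuous lattices equal to completely distributive lattices, and it is one of the four sound doctrines containing $\omega$, namely case (iv) of \cref{thm:sound-omega}, dual to $\Psi^\op$ where $\Psi = $ posets with a greatest element. Once $\Phi$ and $\Psi$ are pinned down, the only real work is to check that the target category $\#U\Psi^\op\!{Inf}$ collapses to $\#U\!{Pos}$, after which the corollary is a verbatim restatement of the general theorem.

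To carry out that collapse I would unwind the meaning of $\Psi^\op$-meets for $\Psi = $ posets with a greatest element. The free $\Psi^\op$-inflattice on a poset $X$ is $\Psi(X^\op)^\op$; a $\Psi$-subset of $X^\op$ is a lower set of $X^\op$ with a greatest element, i.e.\ a principal filter $\up x$ in $X$, so a $\Psi^\op$-meet in $X$ is simply $\bigwedge \up x = x$. Thus $\Psi^\op$-meets are trivial: every poset is a $\Psi^\op$-inflattice, and since any monotone $f$ satisfies $f(x) = \bigwedge f(\up x)$, every monotone map automatically preserves them. Hence $\Psi^\op\!{Inf} = \!{Pos}$ as categories, so a $\#U$-$\Psi^\op$-inflattice is exactly a $\#U$-poset carrying no extra structure, with morphisms exactly the equivariant monotone maps; that is, $\#U\Psi^\op\!{Inf} = \#U\!{Pos}$. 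Since the metric, completeness, and (un)stackability notions of \cref{def:upos-met,def:upos-arch-compl,def:upos-stack} are already formulated for arbitrary $\#U$-posets, a complete stackable $\#U$-$\Psi^\op$-inflattice is precisely a complete stackable $\#U$-poset.

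With these identifications, the dual adjunction \cref{eq:dual-adj} sends a completely distributive lattice $X$ to $\Phi\!{CtsLat}(X, \#I)$ and a complete stackable $\#U$-poset $A$ to $\#U\!{Pos}(A, \#I) = \#U\Psi^\op\!{Inf}(A, \#I)$, and the general theorem asserts it is a dual equivalence; this is exactly the statement of the corollary. I do not expect any genuine obstacle here: the substantive content (the Urysohn-type lemma of \cref{thm:cts-dual} and the stackability construction of \cref{thm:uinf-dual}) resides entirely in the general theorem, and all that remains is the routine definitional verification that the $\Psi^\op$-inflattice structure degenerates, so that ``complete stackable $\#U$-$\Psi^\op$-inflattice'' and ``complete stackable $\#U$-poset'' name the same objects and morphisms.
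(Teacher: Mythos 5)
Your proposal is correct and matches the paper's intended argument: the corollary is stated with an immediate \qed precisely because it is the specialization of the preceding general duality theorem to $\Phi = {}$all posets (case (iv) of \cref{thm:sound-omega}, via \cref{ex:cts-cd}), where $\Psi^\op$-meets are meets of subsets with a least element and hence trivial, so that $\#U\Psi^\op\!{Inf} = \#U\!{Pos}$. Your unwinding of why the inflattice structure degenerates is exactly the routine verification the paper leaves implicit.
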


Let us say that a \defn{$\#U$-meet-semilattice} is a $\#U$-poset with finite meets preserved by the $\#U$-action.

\begin{corollary}
\label{thm:ctslat-dual}
Hom into $\#I$ yields a dual equivalence of categories between continuous lattices and complete stackable $\#U$-meet-semilattices.
\qed
\end{corollary}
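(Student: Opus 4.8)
The plan is to derive this corollary as the special case of the general dual equivalence (the dual adjunction \cref{eq:dual-adj}, shown to be an equivalence in \cref{thm:cts-dual,thm:uinf-dual}) obtained by taking $\Phi$ to be the class of directed posets. First I would record that this $\Phi$ is exactly case (i) of \cref{thm:sound-omega}: it is a sound join doctrine, dual to $\Psi = $ the class of posets with finite cofinality, and it contains $\omega$ (as $\omega$ is directed). Thus the standing assumption of \cref{sec:upos} holds and the general machinery applies.

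Next I would translate both sides into classical language. On the left, by \cref{ex:cts-dir} a $\Phi$-continuous lattice for $\Phi = $ directed joins is precisely a continuous lattice in the usual sense (a complete lattice in which arbitrary meets distribute over directed joins). On the right, since $\Psi$ is the class of posets with finite cofinality, the dual meet doctrine $\Psi^\op$ indexes exactly the \emph{finite} meets; in particular $\Psi$ contains the empty poset, so $\Psi^\op$-meets include the empty meet, i.e.\ a preserved top element. Consequently a $\#U$-$\Psi^\op$-inflattice --- a $\#U$-poset bearing $\Psi^\op$-meets preserved by the action --- coincides with a $\#U$-meet-semilattice as just defined, and $\!{CSt}\#U\Psi^\op\!{Inf}$ becomes the category of complete stackable $\#U$-meet-semilattices. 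Substituting these identifications into the general theorem gives the stated equivalence, implemented on both sides by Hom into $\#I$.

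Since the genuine content is entirely carried by \cref{thm:cts-dual,thm:uinf-dual} together with the soundness classification, there is no real obstacle; the corollary is pure bookkeeping. The one point I would verify carefully is the treatment of empty meets, about which the paper has cautioned (``Minor variations are to include/exclude empty joins''): I must check that choosing case (i) rather than case (ii) of \cref{thm:sound-omega} is precisely what forces $\Psi^\op$ to contain the empty meet, so that ``meet-semilattice'' here carries its unital reading with a $\#U$-preserved top, in agreement with the definition given immediately before the corollary.
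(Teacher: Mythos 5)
Your proposal is correct and is essentially the paper's own (implicit) proof: the corollary is stated without further argument precisely because it is the specialization of the general duality theorem for \cref{eq:dual-adj} to case (i) of \cref{thm:sound-omega}, via exactly the identifications you describe ($\Phi$-continuous lattices $=$ continuous lattices by \cref{ex:cts-dir}, and $\#U$-$\Psi^\op$-inflattices $=$ $\#U$-meet-semilattices). Your flagged point about empty meets also resolves the way you expect: in case (i) the class $\Phi$ consists of \emph{nonempty} directed posets, so $\Psi$ contains the empty poset, hence $\Psi^\op$-meets include the empty meet, and the dual objects are unital $\#U$-meet-semilattices with the action preserving the top element, in agreement with the definition given immediately before the corollary.
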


We end by showing that in the presence of meets, stackability admits a simpler formulation:

\begin{definition}
Let $\smash{\^{\#U}} := \!{CtsLat}(\#I, \#I) \supseteq \#U$ be the monoid of continuous lattice morphisms $\#I -> \#I$, i.e., continuous monotone maps preserving $1$, but possibly not $0$.

A \defn{$\^{\#U}$-module} is a (unital) meet-semilattice with a $\^{\#U}$-action preserving finite meets on both sides.
\end{definition}

\begin{proposition}
\label{thm:umod-uinf}
The forgetful functor is an isomorphism of categories between complete $\smash{\^{\#U}}$-modules and complete stackable $\#U$-meet-semilattices.
The $\le_r$ relations in a $\^{\#U}$-module are given by
\begin{equation*}
\rho(a, b) \le r  \iff  a \le_r b  \iff  a \le b \dotplus r.
\end{equation*}
\end{proposition}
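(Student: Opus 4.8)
The plan is to prove the displayed formula first, and then to reduce the equivalence of categories to the $\#U$-meet-semilattice duality \cref{thm:ctslat-dual}, after which only one genuinely analytic point remains. For the formula, $a \le b \dotplus r \implies a \le_r b$ is immediate from monotonicity of the action: if $u((-) \dotplus r) \le v$ then $u(a) \le u(b \dotplus r) = (u \circ ((-)\dotplus r))(b) \le v(b)$. For the converse I would, by \cref{rmk:upos-met}, test $a \le_r b$ on the single linear zoom $u \in \#U$ with $u : [r,1] \cong [0,1]$, so that $v := u((-)\dotplus r) \in \#U$ and the right adjoint $u^\times(y) = r + (1-r)y$ is affine, hence lies in $\^{\#U}$. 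Exactly as in \cref{thm:upos-met-r} one has $u^\times \circ u = (-)\vee r \ge \mathrm{id}$ and $u^\times \circ v = (-)\dotplus r$, so applying the $\^{\#U}$-action of $u^\times$ to $u(a) \le v(b)$ gives $a \le (u^\times\circ u)(a) \le (u^\times\circ v)(b) = b \dotplus r$. Then $\rho(a,b) \le r \iff a \le_r b$ because $\{r \mid a \le_r b\} = \{r \mid a \le b \dotplus r\}$ is up-closed and, since $(-)\dotplus r$ is $1$-Lipschitz in $r$ and $A$ is complete and Archimedean, closed; in particular $\rho$ agrees with the introduction's metric $\bigwedge\{r \mid a \le b \dotplus r\}$, so ``complete $\^{\#U}$-module'' and ``complete'' for the underlying $\#U$-meet-semilattice mean the same thing.

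The algebraic heart of the correspondence is the following computation. For $c \in [0,1]$ let $w_c, v_c \in \#U$ be the zooms onto $[0,c]$ and $[c,1]$, and let $\sigma_c(y) = c + (1-c)y = v_c^\times \in \^{\#U}$, so that $v_c \circ \sigma_c = \mathrm{id}$ and $w_c \circ \sigma_c$ is the constant map $1$. If $f : M \to N$ is a $\#U$-equivariant meet-preserving map of $\^{\#U}$-modules with $N$ unstackable, then $f(\sigma_c a)$ and $\sigma_c(f a)$ have the same image under $v_c$ (namely $f(a)$) and under $w_c$ (namely the top $1$), whence they are equal by unstackability of $N$ at the split $c$; since every element of $\^{\#U}$ has the form $\sigma_c \circ w$ with $c = u(0)$ and $w \in \#U$, such an $f$ is automatically $\^{\#U}$-equivariant. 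Applied to maps between complete (hence unstackable) modules this shows the forgetful functor is full; applied to $\mathrm{id}_A$ between two $\^{\#U}$-structures on one complete stackable $\#U$-meet-semilattice it shows that at most one compatible structure exists. Faithfulness is clear, and existence of a structure comes by transport: by \cref{thm:ctslat-dual,thm:uinf-dual} a complete stackable $\#U$-meet-semilattice $A$ is isomorphic to $\Phi\!{CtsLat}(X,\#I)$ for its continuous-lattice dual $X$, and the latter carries the postcomposition action $u \cdot g := u \circ g$ of $\^{\#U}$ (well defined since $u$ preserves arbitrary meets and directed joins), which preserves finite meets in both variables and restricts to the $\#U$-action. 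Thus, modulo well-definedness, the forgetful functor is full, faithful, and bijective on objects.

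The one remaining, and main, obstacle is well-definedness itself: that the $\#U$-reduct of a complete $\^{\#U}$-module $M$ is stackable. By the computation above, $\#U\Psi^\op\!{Inf}(M,\#I)$ coincides with the $\^{\#U}$-module maps $M \to \#I$, and the double-dual embedding $\iota$ of \cref{thm:uinf-dual} is $\^{\#U}$-equivariant, so once stackability is known $\iota$ is an isomorphism identifying $M$ with $\Phi\!{CtsLat}(X,\#I)$; but stackability must be proved by hand. The difficulty is that $\^{\#U}$ contains no ``shrinking'' operations — the zoom onto $[0,r]$ has no section in $\^{\#U}$, and there is no truncated subtraction — so neither the uniqueness clause (unstackability) nor the gluing clause of \cref{def:upos-stack} is a purely algebraic consequence of the axioms, and completeness must enter. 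I would extract unstackability by applying the affine maps $\sigma_r, \sigma_{1-r}$ to the two hypotheses, reducing to $a \vee r \le b \vee r$ and $a \dotplus (1-r) \le b \dotplus (1-r)$, and then closing the ``seam'' at $r$ by a metric limit; and I would build the glued element $c$ with $u(c) = a$, $v(c) = b$ as the limit of the approximations $\sigma_r(b) \wedge \beta_n(a)$, where $\beta_n \in \^{\#U}$ are near-sections of the zoom $u$ and the compatibility hypothesis $v'(b) \le u'(a)$ forces the meets to stabilise. This gluing, which lives only in the metric completion and is not given by any closed-form $\^{\#U}$-term, is the crux I expect to be hardest.
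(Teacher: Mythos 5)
Your proposal is correct in outline, and its analytic core coincides with the paper's proof: your glued approximants $\sigma_r(b) \wedge \beta_n(a)$ are literally the paper's $c_s := w(a) \wedge v^\times(b)$ (with $w$ a near-section of the zoom onto $[0,r]$ and $v^\times = \sigma_r$), and your ``algebraic heart'' --- the decomposition $\^{\#U} = \{\sigma_c \circ w \mid w \in \#U\}$ plus unstackability forcing $\^{\#U}$-equivariance --- is the same mechanism by which the paper proves fullness, recovering $w(a)$ for $0 < w(0) < 1$ as the unique element gluing $\top$ over $[0,w(0)]$ with $(v \circ w)(a)$ over $[w(0),1]$. Where you genuinely diverge is the existence of the $\^{\#U}$-structure on a complete stackable $\#U$-meet-semilattice: the paper constructs the extended action by hand and must then verify --- the fiddliest part of its proof --- that the action preserves binary meets in the operator variable, $(u \wedge v)(a) = u(a) \wedge v(a)$, which it does by unstacking over a partition on whose pieces piecewise-linear $u, v$ are comparable, followed by approximation; you instead transport the postcomposition $\^{\#U}$-action on $\!{CtsLat}(X, \#I)$ along the isomorphism of \cref{thm:uinf-dual}. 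That transport is legitimate: \cref{thm:ctslat-dual} is proved before and independently of \cref{thm:umod-uinf}, which is used only to deduce \cref{thm:ctslat-dual-umod}, so there is no circularity; and you correctly isolate the one direction that cannot be transported, namely stackability of the $\#U$-reduct of a complete $\^{\#U}$-module, which is needed even for the forgetful functor to land in the right category and which both you and the paper prove by hand. What the transport buys is the complete elimination of the meet-preservation verifications; what it costs is that the proposition becomes a corollary of the duality theorems rather than the self-contained algebraic fact the paper establishes.

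One step of yours needs tightening. The reduction of unstackability to the two inequalities $a \vee r \le b \vee r$ and $a \dotplus (1-r) \le b \dotplus (1-r)$ does not close ``by a metric limit'' alone: the pointwise meet of the operators $(-) \vee r$ and $(-) \dotplus (1-r)$ is bounded below by $r \wedge (1-r)$ at $0$, so these two inequalities by themselves do not give $a \le_\epsilon b$ for small $\epsilon$. You must first hit the second inequality with a further near-section $h \in \#U$ (vanishing at $1-r$, rising to $1$ only near $1$) so that the operator $((-) \vee r) \wedge \paren{h \circ ((-) \dotplus (1-r))}$ is squeezed between the identity and $(-) \dotplus \epsilon$, then use preservation of binary meets in the operator variable and Archimedeanness. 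This is exactly the paper's two-line computation $1_{\#I} \le (w \circ u) \wedge (v^\times \circ v) \le (-) \dotplus rs$, and it is the same meet trick you already deploy in your gluing step, so the gap is one of exposition rather than of method.
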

\begin{proof}[Proof]
The characterization of $\le_r$ is proved as in \cref{thm:upos-met-r}.

Next, an Archimedean $\^{\#U}$-module $A$ is unstackable as a $\#U$-poset: by \cref{rmk:upos-stack}, it suffices to check that for $0 < r < 1$, $u := 1 \wedge (-)/r$, and $v := ((-) \dotminus r)/(1-r)$, if $u(a) \le u(b)$ and $v(a) \le v(b)$, then $a \le b$.
Let $s > 0$, and let $r(-) \le w \in \#U$ with equality on $[0, 1-s]$.
Then $1_\#I \le (w \circ u) \wedge (v^\times \circ v) \le (-) \dotplus rs$, whence from $u(a) \le u(b)$ and $v(a) \le v(b)$ we have $a \le b \dotplus rs$, i.e., $a \le_{rs} b$ by the above.
Since $A$ is Archimedean, it follows that $a \le b$.

If moreover $A$ is a complete $\^{\#U}$-module, then it is stackable: for $a, b \in A$ such that $v'(b) \le u'(a)$ for all $u', v' \in \#U$, with the same $s, u, v, w$ as above, letting $c_s := w(a) \wedge v^\times(b)$, we have $u(c_s) = u(w(a)) \wedge u(v^\times(b)) = u(w(a))$ which is within distance $s$ of $a$ since $1_\#U \le u \circ w \le (-) \dotplus s$, and $v(c_s) = v(w(a)) \wedge v(v^\times(b)) = v(v^\times(b)) = b$.
In particular, by unstackability (using \cref{thm:upos-unstack-r} and uniform continuity of $u$), the $c_s$ form a Cauchy net as $s \searrow 0$, hence converge to some $c$ such that $u(c) = a$ and $v(c) = b$.
Thus the forgetful functor restricts to the claimed subcategories.

The forgetful functor is full on Archimedean $\^{\#U}$-modules: the action by $w \in \^{\#U} \setminus \#U$ can be recovered from the $\#U$-action, since $w(a) = \top$ for $w(0) = 1$, while for $0 < w(0) < 1$, by unstackability, $w(a)$ is the unique element such that $u(w(a)) = \top$ and $v(w(a)) = (v \circ w)(a)$ where $u, v$ are as above for $r := w(0)$.
Thus $\#U$-equivariance implies $\^{\#U}$-equivariance.

Conversely, in a complete stackable $\#U$-meet-semilattice $A$, we may extend the $\#U$-action to a $\^{\#U}$-action by defining $w(a)$ for $0 < w(0) < 1$ to be the unique element as above.

The $\#U$-action on an Archimedean stackable $\#U$-poset $A$ preserves binary meets in $\#U$: for piecewise linear $u, v \in \#U$, we may show $(u \wedge v)(a) = u(a) \wedge v(a)$ by unstacking over a finite partition of $[0,1]$ on each piece of which $u, v$ are comparable; for arbitrary $u, v$, take piecewise linear approximations.

Finally, on a complete stackable $\#U$-meet-semilattice, the extended $\^{\#U}$-action from above also preserves binary meets in $\^{\#U}$, by a routine unstacking over $0 < w(0) < 1$.
\end{proof}

\begin{corollary}[of \cref{thm:ctslat-dual,thm:umod-uinf}]
\label{thm:ctslat-dual-umod}
Hom into $\#I$ yields a dual equivalence of categories between continuous lattices and complete $\^{\#U}$-modules.
\qed
\end{corollary}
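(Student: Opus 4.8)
The plan is to obtain this as the composite of the two cited results. By \cref{thm:ctslat-dual}, the functor $\Phi\!{CtsLat}(-, \#I)$ (taking $\Phi$ to be the directed-join doctrine, so that $\Psi^\op$ is the finite-meet doctrine) is a dual equivalence between continuous lattices and complete stackable $\#U$-meet-semilattices. By \cref{thm:umod-uinf}, the forgetful functor is an isomorphism of categories between complete $\^{\#U}$-modules and complete stackable $\#U$-meet-semilattices. I would therefore compose the dual equivalence of \cref{thm:ctslat-dual} with the inverse of this forgetful isomorphism to obtain a dual equivalence between continuous lattices and complete $\^{\#U}$-modules, and then verify that this composite is still computed by ``hom into $\#I$''.

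The only substantive point is that the two hom functors agree under the identification of \cref{thm:umod-uinf}. I would first note that $\#I$ is itself a complete $\^{\#U}$-module (with the tautological evaluation action), whose underlying complete stackable $\#U$-meet-semilattice is exactly the $\#I$ of \cref{thm:ctslat-dual}; so on underlying sets, homming into $\#I$ is literally the same operation in both categories. Next, for a continuous lattice $X$, the $\#U$-action on $\Phi\!{CtsLat}(X, \#I)$ is postcomposition, $u \cdot f := u \circ f$, and this plainly extends to postcomposition by all $w \in \^{\#U}$, which remains meet- and directed-join-preserving and hence defines a genuine $\^{\#U}$-module structure. By the uniqueness of the $\^{\#U}$-extension established in \cref{thm:umod-uinf}, this postcomposition action coincides with the abstractly reconstructed one, so $\Phi\!{CtsLat}(X, \#I)$, viewed through the forgetful isomorphism, is precisely the complete $\^{\#U}$-module of continuous-lattice morphisms $X \to \#I$ under postcomposition.

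Symmetrically, for a complete $\^{\#U}$-module $A$, its underlying complete stackable $\#U$-meet-semilattice has dual continuous lattice $\#U\Psi^\op\!{Inf}(A, \#I)$, and the $\^{\#U}$-equivariant maps $A \to \#I$ coincide with the $\#U$-equivariant ones by the fullness of the forgetful functor on Archimedean modules (the relevant paragraph in the proof of \cref{thm:umod-uinf}). Thus no new analytic content is required beyond the two cited results: the duality transports verbatim, and the main (minor) obstacle is just the bookkeeping that the naive postcomposition $\^{\#U}$-action matches the abstract extension, which is immediate from the uniqueness clause of \cref{thm:umod-uinf}.
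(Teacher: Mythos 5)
Your proposal is correct and is essentially the paper's own argument: the corollary is stated with an immediate \qed precisely because it is the composite of the dual equivalence of \cref{thm:ctslat-dual} with the categorical isomorphism of \cref{thm:umod-uinf}. The bookkeeping you spell out — that the postcomposition $\^{\#U}$-action on $\Phi\!{CtsLat}(X,\#I)$ agrees with the abstract extension by the uniqueness clause, and that $\^{\#U}$- and $\#U$-equivariant maps into $\#I$ coincide by fullness — is exactly the content implicitly invoked by the paper's \qed, so nothing is missing or different.
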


%
%

We end by noting that we currently do not know whether complete $\^{\#U}$-modules can be equationally axiomatized, perhaps along the lines of \cite{Anachbin}, thereby showing that $\!{CtsLat}^\op$ is a variety.

\bigskip\noindent
Department of Mathematics \\
University of Michigan \\
Ann Arbor, MI 48109, USA \\
Email: \nolinkurl{ruiyuan@umich.edu}


\begin{thebibliography}{000000000}

\bibitem[Abb19]{Anachbin}
M.~Abbadini,
\emph{The dual of compact ordered spaces is a variety},
Theory Appl.\ Categ.\ \textbf{34}~(2019), no.~44, 1401--1439.

\bibitem[ABLR02]{ABLRdacc}
J.~Adámek, F.~Borceux, S.~Lack, and J.~Rosický,
\emph{A classification of accessible categories},
J.\ Pure Appl.\ Algebra \textbf{175}~(2002), no.~1--3, 7--30.

\bibitem[AK88]{AKsat}
M.~H.~Albert and G.~M.~Kelly,
\emph{The closure of a class of colimits},
J.\ Pure Appl.\ Algebra \textbf{51}~(1988), no.~1--2, 1--17.

\bibitem[BE83]{BEzcts}
H.-J.~Bandelt and M.~Erné,
\emph{The category of $Z$-continuous posets},
J.~Pure Appl.~Algebra\ \textbf{30}~(1983), no.~3, 219--226.

\bibitem[G$^+$03]{GHKLMS}
G.~Gierz, K.~H.~Hofmann, K.~Keimel, J.~D.~Lawson, M.~Mislove, and D.~S.~Scott,
\emph{Continuous lattices and domains},
Encyclopedia of Mathematics and its Applications, vol.~93, Cambridge University Press, 2003.

\bibitem[HNN18]{HNNnachbin}
D.~Hofmann, P.~Neves, and P.~Nora,
\emph{Generating the algebraic theory of $C(X)$: the case of partially ordered compact spaces},
Theory Appl.\ Categ.\ \textbf{33}~(2018), no.~12, 276--295.

\bibitem[HMS74]{HMSlat}
K.~H.~Hofmann, M.~Mislove, and A.~Stralka,
\emph{The Pontryagin duality of compact 0-dimensional semilattices and its applications},
Lecture Notes in Mathematics, vol.~396. Springer-Verlag, Berlin-New York, 1974.

\bibitem[Joh82]{Jstone}
P.~T.~Johnstone,
\emph{Stone spaces}, Cambridge Studies in Advanced Mathematics, vol.~3, Cambridge University Press, Cambridge, England, 1982.

\bibitem[KS05]{KScolim}
G.~M.~Kelly and V.~Schmitt,
\emph{Notes on enriched categories with colimits of some class},
Theory Appl.\ Categ.\ \textbf{14}~(2005), no.~17, 399--423.

\bibitem[Kün09]{Kqunif}
H.-P.~A.~Künzi,
\emph{An introduction to quasi-uniform spaces},
in: \emph{Beyond topology}, 239--304,
Contemp.\ Math.\ vol.~486, Amer.\ Math.\ Soc., Providence, RI, 2009.

\bibitem[MR17]{MRgelfand}
V.~Marra and L.~Reggio,
\emph{Stone duality above dimension zero: axiomatising the algebraic theory of $C(X)$},
Adv.\ Math.\ \textbf{307}~(2017), 253--287.

\bibitem[WWT78]{WWTzpos}
J.~B.~Wright, E.~G.~Wagner, and J.~W.~Thatcher,
\emph{A uniform approach to inductive posets and inductive closure},
Theoret.\ Comput.\ Sci.\ \textbf{7}~(1978), no.~1, 57--77.

\bibitem[Xu95]{Xctslat}
X.~Xu,
\emph{Construction of homomorphisms of $M$-continuous lattices},
Trans.\ Amer.\ Math.\ Soc.\ \textbf{347}~(1995), no.~8, 3167--3175.

\end{thebibliography}
\end{document}